\documentclass{amsart}
\pdfoutput=1 
\usepackage{amssymb, latexsym, xfrac}
\usepackage[shortlabels]{enumitem}
\usepackage{hyperref}
\usepackage{pdfsync}

\title[Mapping tori and BNS invariant]{Mapping tori of free group automorphisms, and the Bieri-Neumann-Strebel invariant of graphs of groups}
\author{Christopher H. Cashen} 
\address{
Fakult\"at f\"ur Mathematik, Universit\"at Wien}
\email{\href{mailto:christopher.cashen@univie.ac.at}{christopher.cashen@univie.ac.at}}
\author{Gilbert Levitt}
\address{Laboratoire de Math\'ematiques Nicolas Oresme, Universit\'e de Caen et CNRS (UMR 6139)}
\email{\href{mailto:levitt@unicaen.fr}levitt@unicaen.fr}
\keywords{BNS invariant, fibration, free group automorphism, mapping
  torus}
\subjclass[2010]{20F65, 20E, 57M}
\date{December 30, 2014.}

\thanks{First author partially supported by 
 European Research Council (ERC) grant \#259527 and the
Austrian Science Fund (FWF):M1717-N25. 
 Both authors partially supported by Agence Nationale de la Recherche
 (ANR) grant ANR-2010-BLAN-116-01 GGAA. 
They also thank the Erwin Schr\"odinger Institute workshop ``Geometry and Computation in Groups''.}

\hypersetup{
    pdftitle={Bieri-Neumann-Strebel Invariants for Group Splittings},    
    pdfauthor={Christopher H. Cashen and Gilbert Levitt},     
    pdfkeywords={BNS invariant, fibration, free group automorphism, mapping torus}, 
    colorlinks=true,       
    linkcolor=black,          
    citecolor=black,        
    filecolor=black,      
    urlcolor=black           
}

\theoremstyle{plain}
\newtheorem{theorem}{Theorem}[section]
\newtheorem{lemma}{Lemma}[section]
\newtheorem{proposition}{Proposition}[section]
\newtheorem{corollary}{Corollary}[section]
\theoremstyle{remark}
\newtheorem*{rem*}{Remark}
\theoremstyle{definition}
\newtheorem{example}{Example}[section]
\newtheorem{definition}{Definition}[section]
\newtheorem{remark}{Remark}[section]

\def\makeautorefname#1#2{\expandafter\def\csname#1autorefname\endcsname{#2}}
\let\fullref\autoref

\makeautorefname{theorem}{Theorem} 
\makeautorefname{lemma}{Lemma} 
\makeautorefname{proposition}{Proposition} 
\makeautorefname{corollary}{Corollary} 
\makeautorefname{definition}{Definition}
\makeautorefname{section}{Section}
\makeautorefname{subsection}{Section}
\makeautorefname{subsubsection}{Section}
\makeautorefname{remark}{Remark}
\makeautorefname{example}{Example}

\makeatletter 
\let\c@lemma=\c@theorem 
\makeatother
\makeatletter 
\let\c@proposition=\c@theorem 
\makeatother
\makeatletter 
\let\c@corollary=\c@theorem 
\makeatother
\makeatletter 
\let\c@definition=\c@theorem 
\makeatother
\makeatletter 
\let\c@remark=\c@theorem 
\makeatother
\makeatletter 
\let\c@example=\c@theorem 
\makeatother

\DeclareMathOperator{\Aut}{Aut} 
\DeclareMathOperator{\Out}{Out} 
\DeclareMathOperator{\upg}{UPG} 

\DeclareMathOperator{\GL}{GL}

\DeclareMathOperator{\gpcenter}{Z}
\newcommand{\R}{\mathbb{R}}
\newcommand{\Q}{\mathbb{Q}}
\newcommand{\F}{\mathbb{F}}
\newcommand{\Z}{\mathbb{Z}}

\DeclareMathOperator{\Cay}{Cay}

\newcommand{\gbs}{\mathrm{GBS}}

\newcommand{\from}{\colon\thinspace}
\newcommand{\onto}{\twoheadrightarrow}
\newcommand{\into}{\hookrightarrow}


\begin{document}
\begin{abstract}
Let $G$ be the mapping torus of a polynomially growing automorphism of a finitely generated free group. We determine which epimorphisms from $G$ to $\Z$ have finitely generated kernel, and we compute the rank of the kernel. We thus describe all possible ways of expressing $G$ as the mapping torus of a free group automorphism. This is similar to the case for 
3--manifold groups, and different from the case of mapping tori of
exponentially growing free group automorphisms.
The proof uses a hierarchical decomposition of $G$ and requires determining the Bieri-Neumann-Strebel invariant of the fundamental group of certain graphs of groups. 
\end{abstract}

\maketitle

\section{Introduction}
Given an automorphism $\alpha$ of a group $F$, one may form its mapping torus $$G_\alpha=F\rtimes_\alpha\Z=\langle F,t\mid t^{-1}ft=\alpha(f)\rangle$$ and obtain an exact sequence $1\to F\to G_\alpha\to\Z\to1$. Geometrically, any fibration over the circle leads to such an exact sequence, with $F$ the fundamental group of the fiber and $\alpha$ induced by the monodromy. If the fiber is compact, $F$ and $G_\alpha$ are finitely generated.

Conversely, any epimorphism $\varphi:G\to\Z$ yields a split exact sequence $1\to\ker \varphi\to G\to\Z\to1$, but in general $G$ finitely generated does not imply that $\ker\varphi$ is finitely generated. We will say that $\varphi:G\to\Z$ (or any multiple of $\varphi$) comes from a \emph{fibration} if $\ker\varphi$ is finitely generated. We then call $F=\ker\varphi$ the \emph{fiber}, and we define the \emph{monodromy} as the element $\Phi\in\Out(F)$ determined by the action of the generator of $\Z$. The group $G$ is isomorphic to the mapping torus $G_\alpha$ of any   $\alpha\in\Aut(F)$  representing  $\Phi$.

With this terminology, one may ask, given $G$, which maps
$\varphi:G\to\Z$ come from a fibration. This was answered by Thurston
\cite{Thu86} when $G$ is a 3-manifold group, or equivalently when the fiber is
a surface group. 
The question is more subtle when $F$ is a free group. Indeed, the
class of free-by-cyclic groups remains largely mysterious, in spite of
results such as \cite{BriGro10, FeiHan99, GauLus07}.

Recent work by Dowdall, Kapovich, and Leininger \cite{DowKapLei13a,DowKapLei13b} studies mapping tori of exponentially growing automorphisms of free groups. We study the opposite case, when monodromies are polynomially growing automorphisms. 

 \begin{theorem}[see Theorems \ref{main} and  \ref{prop:fiberrank}]\label{mainintro}
Let $G=G_\alpha=\F_n\rtimes_\alpha\Z$ for $\alpha\in\Aut(\F_n)$ polynomially growing, with $n\ge2$. 
There exist elements $t_1,\dots, t_{n-1}$ in $G$ (not necessarily distinct), and $k\ge1$, 
such that, given an epimorphism $\varphi:G\to\Z$:
 \begin{itemize}
 \item If some $\varphi(t_i)$ is $0$, then
    $\ker\varphi$ virtually
   surjects onto $\F_\infty$ (an infinitely generated free group).
  \item If no $\varphi(t_i)$ is $0$, then 
    $\ker\varphi$ is a finitely generated free group whose rank is:
    \[r=1+\frac1k\sum_{i=1}^{n-1} | \varphi(t_i) |. \]
 \end{itemize}
\end{theorem}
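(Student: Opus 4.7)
The plan is to use the Bestvina--Feighn--Handel structure theory of polynomially growing automorphisms to realize $G$ as an iterated HNN extension, and then apply a BNS-invariant computation for graphs of groups (the technical heart of the paper) to read off both the finite-generation criterion and the rank of the kernel. First I would pass to a power $\alpha^k$ that is UPG (unipotent polynomially growing). The mapping torus $G_{\alpha^k}$ is a finite-index subgroup of $G=G_\alpha$, an epimorphism $\varphi\from G\onto\Z$ restricts to $k\varphi$ on $G_{\alpha^k}$, and finite generation and rank of $\ker\varphi$ can be read off those of $\ker(\varphi|_{G_{\alpha^k}})$ up to this index, which accounts for the factor $k$. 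Once $\alpha$ is UPG, there is a filtration of $\F_n$ by $\alpha$-invariant free factors $1=H_0\subset H_1\subset\cdots\subset H_n=\F_n$ with $\rank H_i=i$ and a basis $x_1,\dots,x_n$ satisfying $\alpha(x_i)=x_i u_{i-1}$ for some $u_{i-1}\in H_{i-1}$. This yields a tower of mapping tori $G_i=H_i\rtimes\Z$, and each inclusion $G_{i-1}\hookrightarrow G_i$ realizes $G_i$ as an HNN extension of $G_{i-1}$ with cyclic edge group and stable letter (essentially) $t_i=x_i$ or a small modification thereof, yielding the $n-1$ distinguished elements of the statement.

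Next I would apply the graph-of-groups BNS calculation inductively up this tower. Recall that $\ker\varphi$ is finitely generated if and only if $\pm\varphi\in\Sigma(G)$. For an HNN extension $G_i=G_{i-1}*_C$ with stable letter $t_i$ and edge group $C\le G_{i-1}$, the BNS formula (the main technical input from the paper) should give $\pm\varphi\in\Sigma(G_i)$ iff $\varphi(t_i)\neq0$ and $\pm\varphi|_{G_{i-1}}\in\Sigma(G_{i-1})$. Starting from $G_1\cong\Z^2$ (UPG on $H_1$ is trivial, so every nonzero character lies in $\Sigma$), an induction up the tower gives the dichotomy: $\ker\varphi$ is finitely generated exactly when $\varphi(t_i)\neq0$ for all $i$. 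When some $\varphi(t_j)=0$, the HNN structure at level $j$ produces an infinitely generated normal subgroup inside $\ker\varphi$ arising from the Bass--Serre tree of that splitting; one checks that this persists under the remaining HNN extensions, yielding a virtual surjection $\ker\varphi\onto\F_\infty$.

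For the rank formula, assuming all $\varphi(t_i)\neq0$, I would realize $\ker\varphi$ as the fundamental group of the $\varphi$-cyclic cover of a $2$-complex modeling the tower. At level $i$ the HNN extension contributes a loop whose preimage in the cover has $|\varphi(t_i)|/k$ edges, and an inductive Bass--Serre/Euler-characteristic count yields $\rank(\ker\varphi)-1=\tfrac{1}{k}\sum_{i=1}^{n-1}|\varphi(t_i)|$; the starting ``$+1$'' records the single loop coming from the base $\Z^2$. The principal obstacle is the second step: establishing a usable BNS formula for fundamental groups of graphs of groups in sufficient generality to handle the iterated HNN extensions arising from UPG strata, whose edge groups are built from elements of the form $t$ twisted by $u_{i-1}$, and then carefully controlling how $\Sigma$ transforms up the tower. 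The coupling between the stable letter $t_i$ and the lower stratum via this nontrivial edge inclusion is the delicate point that prevents the inductive step from being purely formal and that forces the BNS-of-graph-of-groups computation to be done in the generality the paper advertises.
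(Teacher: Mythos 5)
Your overall strategy matches the paper's: pass to a UPG power, decompose the mapping torus using BFH relative train track theory, feed the resulting splittings into the BNS computation for graphs of groups, and then count ranks. However, there is a genuine gap in the decomposition step. You assert that a UPG automorphism admits a representative $\alpha$ together with a \emph{nested} filtration $1=H_0\subset H_1\subset\cdots\subset H_n=\F_n$ of $\alpha$-invariant free factors of every rank $i$, with a basis satisfying $\alpha(x_i)=x_iu_{i-1}$, $u_{i-1}\in H_{i-1}$. This amounts to assuming the BFH filtered graph may be taken to be a \emph{rose}. That is not what \cite[Theorem 5.1.8]{BesFeiHan00} gives: the filtered graph $\Gamma$ is only assumed to have valence at least two, its single-edge strata may be separating, and the subgraphs $\Gamma_i$ may be disconnected. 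When the topmost stratum $E_k$ is a separating edge, the fiber splits as a free product $B_1*B_2$ of two $\alpha$-invariant factors (not as $B_1*\langle x\rangle$), and the mapping torus splits as an \emph{amalgamated product} $G_{\alpha_1}*_{\langle t\rangle}G_{\alpha_2}$ rather than an HNN extension. The paper's hierarchy (Lemma \ref{lem:upghierarchy}) is therefore a branching rooted tree with $\Z^2$ leaves, not a linear tower, and both the BNS step (Theorem \ref{thm:Zhierarchy}) and the rank count (Theorem \ref{prop:fiberrank}) must be done by induction over that branching structure. To make your argument work you would need to prove that every UPG automorphism is upper triangular with respect to a rose, which you have not done and which is not an immediate consequence of the references.

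A second, related slip: the distinguished elements $t_1,\dots,t_{n-1}$ are \emph{not} the new free-group generators $x_i$ or the HNN stable letters. In the paper's HNN splitting $G_\alpha=\langle G_{\alpha_1},x\mid x^{-1}tx=tu^{-1}\rangle$, the stable letter is $x\in\F_n$ while the edge group is $\langle t\rangle$; the $t_i$ are chosen as generators of the \emph{edge groups} of the hierarchy, which is why they lie in $G\setminus\F_n$ (Lemma \ref{lem:upghierarchy}, ``moreover'') and why vanishing of $\varphi(t_i)$ is the right condition. If you took $t_i=x_i$ you would get elements of the fiber, on which $\varphi$ certainly can vanish without killing the BNS class. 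Finally, the reduction to $\alpha^k$ is not literally ``$\varphi$ restricts to $k\varphi$'': $\varphi|_{G_{\alpha^k}}$ has image $\langle\varphi(\F_n),k\varphi(t)\rangle$, whose index in $\varphi(G)$ depends on $\varphi$; the factor $k$ in the rank formula comes out of a more careful index computation (see the end of the proof of Theorem \ref{prop:fiberrank}), not a scaling of $\varphi$.
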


    Freeness of $\ker\varphi$ when finitely generated was proved in
    \cite[Remark~2.7]{GeoMihSap01}. 
Any $\varphi$ such that no $\varphi(t_i)$ is 0 expresses $G$
    as the mapping torus of an automorphism of a finitely generated
    free group. 
It is known \cite{Mac02} that this automorphism is polynomially growing, with the same degree of growth as the original $\alpha$.

It follows from the theorem  that any homomorphism $\varphi:G\to\R$ may be approached by fibrations, and $\ker\varphi$ virtually maps onto $\F_\infty$ if it is not finitely generated. We will also see that $G$ cannot be written as the mapping torus of an injective, non surjective, endomorphism of a finitely generated free group. None of these properties holds for mapping tori of arbitrary automorphisms   of free groups. 
    
    We prove \fullref{mainintro} by induction, using the fact that
$G$ admits a hierarchy: it may be iteratively split along cyclic groups until vertex groups are $\Z^2$. The inductive step   requires us   to understand fibrations of fundamental groups of graphs of groups. We do so in the more general context of the \emph{Bieri-Neumann-Strebel (BNS) invariant}. 

Recall \cite{BieNeuStr87,Str12} that  
the BNS invariant 
$\Sigma(G)$ (or $\Sigma^1(G)$, in the notation of \cite{Str12}) of a finitely generated group $G$ is a
certain open subset of the 
sphere $S(G)$ of projective classes $[\varphi]$ of non-zero homomorphisms $\varphi:G\to
\R$. The connection with the previous discussion is that $\varphi:G\onto\Z$ is a fibration if and only if $\Sigma(G)$ contains   $[\varphi]$ and $[-\varphi]$.

We investigate the BNS invariant of graphs of groups in \fullref{sec:graphsofgroups}.
  So far, no systematic such study   has appeared in print; after
  receiving a preliminary version of this paper, Ralph Strebel sent us
  the unpublished monograph \cite{BieriStrebelMonograph}, whose Theorem II.5.1 is very similar to  our results. 
  
Say that a graph of groups is reduced if no edge carries a trivial amalgam  $A*_AB$, and define 
the class $\varsigma$ as consisting of all finitely generated groups $H$ such that $\Sigma(H)=S(H)$ (this condition is equivalent to $[H,H] $ being finitely generated).

\begin{theorem}[Corollaries \ref{varsigmaamalgam} and \ref{graphofvarsigmagroups}] \label{BNSintro}
Let $G$ be the fundamental group of a finite reduced graph of groups $\Gamma$, with $G$ finitely generated.   
 Assume that $\Gamma$ is not an ascending HNN-extension. Consider a nonzero map $\varphi:G\to\R$.
 \begin{itemize}
\item If edge groups are in $\varsigma$,
then  $[\varphi]\in \Sigma(G)$ if and only if $\varphi $ is non-trivial on every edge group and  $[\varphi_{ | G_v}]\in\Sigma(G_v)$ for every vertex group $G_v$. 
\item If vertex groups are in $\varsigma$, then
  $[\varphi]\in\Sigma(G)$ if and only if $\varphi $ is non-trivial on every edge
  group.  
In particular, $\Sigma(G)=-\Sigma(G)$ is the complement of a finite number of rationally defined subspheres. 
\end{itemize}
\end{theorem}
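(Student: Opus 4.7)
My plan is to reduce the general graph-of-groups statement to the two basic cases --- amalgamated product $G=A*_CB$ and non-ascending HNN extension $G=A*_C$ --- by inducting on the number of edges of $\Gamma$. Collapsing or splitting off one edge at a time preserves both the hypothesis on $\varphi$ and the class assumption ($\varsigma$ on edges, or $\varsigma$ on vertices). The essential tool is the characterization of $\Sigma(G)$ via connectivity of the positive subgraph of $\Cay(G,S)$ spanned by $\{g\in G:\varphi(g)\geq 0\}$, combined with the standard Bieri-Neumann-Strebel obstruction: if $G$ admits a non-trivial, non-ascending splitting over a subgroup of $\ker\varphi$, then $[\varphi]\notin\Sigma(G)$.

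For the necessity of $\varphi|_{G_e}\neq 0$ on every edge group (which is common to both bullets), suppose the contrary. Let $T$ be the Bass-Serre tree of $\Gamma$ and collapse all edge orbits except that of the offending $e$; since $\Gamma$ is reduced and not an ascending HNN extension, the result is a one-edge splitting of $G$ over a subgroup of $\ker\varphi$ that is either a non-trivial amalgam or a non-ascending HNN, and the BNS obstruction yields $[\varphi]\notin\Sigma(G)$. For the extra condition $[\varphi|_{G_v}]\in\Sigma(G_v)$ in the first bullet, I would use that edge groups in $\varsigma$ have finitely generated commutator subgroup, which lets us re-route any positive path in $\Cay(G,S)$ between two elements of $G_v$ into one that stays in $G_v$; hence connectivity of the positive subgraph of $\Cay(G,S)$ forces connectivity of the positive subgraph of $\Cay(G_v,S_v)$. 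In the second bullet, vertex groups in $\varsigma$ automatically satisfy $\Sigma(G_v)=S(G_v)$, so no condition beyond non-triviality on edges is needed.

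For sufficiency, the hypothesis $\varphi|_{G_e}\neq 0$ supplies elements of each edge group with arbitrary $\varphi$-value, which lets us insert or cancel factors from $G_e$ in a reduced normal form so as to keep partial products in the positive cone. In the edge-groups-in-$\varsigma$ case, $\varphi|_{G_e}\neq 0$ together with $\Sigma(G_e)=S(G_e)$ gives a connected positive subgraph inside each edge group, and combined with the assumed connectivity of the positive subgraph in each vertex group we glue positive pieces along each edge. In the vertex-groups-in-$\varsigma$ case, the positive subgraphs of vertex groups are automatically connected, and the only remaining task is to connect cosets across each edge, which again uses only $\varphi|_{G_e}\neq 0$.

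The main obstacle will be the sufficiency argument: given $g\in G$ with $\varphi(g)\geq 0$, one must exhibit an explicit positive path in $\Cay(G,S)$ from $g$ to $1$. Inducting on the reduced length of $g$ in the graph of groups, at each step one crosses one edge and lands in a new vertex-group coset; the key move is to use an element of $G_e$ with a carefully chosen $\varphi$-value to offset the $\varphi$-cost of that crossing so that every prefix remains non-negative. This combinatorial bookkeeping, together with verifying the amalgam and non-ascending HNN base cases and checking that the induction on edges of $\Gamma$ respects the two hypotheses separately, is where the bulk of the technical work lies.
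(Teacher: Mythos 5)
Your proposal matches the paper's proof strategy essentially line for line: reduce to one-edge splittings by induction on edges of $\Gamma$; prove sufficiency via positive-path connectivity in the Cayley graph, inserting elements of the edge group of chosen $\varphi$-sign to offset crossings (the paper's Lemma~\ref{amalginSigma} and Corollary~\ref{dansS}); prove necessity of $\varphi|_{G_e}\neq 0$ by collapsing all other edges to obtain a reduced, non-ascending one-edge splitting and invoking the classical Bieri--Strebel obstruction (the paper's Proposition~\ref{amalgnotinSigma}, which cites Strebel's Proposition~C2.13); and prove necessity of $[\varphi|_{G_v}]\in\Sigma(G_v)$ by re-routing positive paths through the edge groups using $\Sigma(G_e)=S(G_e)$ (the paper's Proposition~\ref{inSigmavert}). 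The only cosmetic difference is that you phrase the re-routing hypothesis as $[G_e,G_e]$ being finitely generated rather than $\Sigma(G_e)=S(G_e)$, but these are equivalent and the paper records this equivalence explicitly, so this is the same proof.
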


The second assertion applies in particular to graphs of virtually
polycyclic groups. 
In \fullref{sec:gbs} we specialize it to $\gbs_n$ groups, defined as fundamental groups of finite graphs of groups with every vertex and edge group isomorphic to $\Z^n$. If $G$ is a non-solvable $\gbs_n$ group, then  $[\varphi]\in\Sigma(G)$ if and only if $\varphi(H)\ne0$, where $H$ is any edge group. When $G$ is unimodular (i.e.\ virtually $\Z^n\times \F_k$ for some $k\ge2$), or $n=1$, this is equivalent to $\varphi(Z)\ne0$ with $Z$ the center of $G$. 

In \fullref{sec:hierarchies} we extend  \fullref{BNSintro} 
further to hierarchies, and we study the isomorphism type of $\ker\varphi$ through its action on trees.
We apply these results in \fullref{sec:mappingtori} to compute the
Bieri-Neumann-Strebel invariant for mapping tori of polynomially
growing free group automorphisms: with the notations of \fullref{mainintro},   a  nonzero $\varphi:G\to\R$ represents an element of $\Sigma(G)$ if and only if no $\varphi(t_i)$ equals 0, so \emph{$\Sigma(G)$ is the complement of a finite union of codimension 1 spheres.}

In \fullref{rank} we compute the rank of $\ker\varphi$ when it is
finitely generated, thus completing the proof of 
\fullref{mainintro}. 
Another approach to   this computation is to use the Alexander norm
(see the discussion in \cite[Section~1.6]{DowKapLei13a}).

In \fullref{cen} we consider mapping tori of finite order automorphisms of free groups $\F_n$, with $n\ge2$. These groups are exactly the non-solvable $\gbs_1$ groups having a non-trivial center \cite{Lev13gbsrank} (a $\gbs_1$ group  with trivial center has $\Sigma(G)$ empty). Given such a $G$, we determine for which values of $n$ and $k$ one can view $G$ as the mapping torus of an element of order $k$ in $\Out(\F_n)$, and also for which $n,k$ there is a subgroup $G_0\subset   G$  of index $k$, isomorphic to $\F_n\times\Z$,  and containing the center of $G$. In particular, we show that the set of ranks of   fibers is an arithmetic progression. 

\bigskip

The authors thank Ilya Kapovich for his interest and encouragement,
  Goulnara Arzhantseva for translating Khramtsov's example, and Ralph Strebel for useful comments.

 \section{The BNS invariant of a graph of groups}\label{sec:graphsofgroups}
 
 We first recall the definition and some properties of the BNS
 invariant $\Sigma(G)$. See \cite{BieNeuStr87} and \cite{Str12} for details.
 
Given a finitely generated group $G$, let  $S(G)$ be the sphere consisting of projective classes of non-zero homomorphisms $\varphi:G\to
\R$. We write $[\varphi]$ for the class of $\varphi$, so that $[\varphi]=[\psi]$ if and only if $\varphi=\lambda\psi$ with $\lambda>0$. The dimension of $S(G)$ is $b_1(G)-1$, with $b_1(G)$ the first Betti number.   If $X\subset \Sigma(G)$, let $-X$ be the set of classes $[-\varphi]$ for $[\varphi]\in X$.

A class $[\varphi]\in S(G)$ is \emph{discrete} if every homomorphism
in the class has discrete image, which is equivalent to saying that
$[\varphi]$ contains a surjection onto $\Z$.

For $H\subset G$, let $S(G,H)$ denote the subsphere $\{[\varphi]\in
S(G)\mid H\subset\ker\varphi\}$.    Its complement in $S(G)$ is denoted $S(G,H)^c$.
Let $\iota_H\from H\into G$ denote the inclusion map, and let
$\iota_H^*\from S(G,H)^c \to S(H)$ be the restriction map $ [\varphi]\mapsto [\varphi|_H]$.

 Let $\Cay(G,\mathcal{G})$ be the Cayley graph of $G$ with respect to a
finite generating set $\mathcal{G}$.   We identify its vertex set with $G$.
Given $\varphi\from G\to\R$, let $\Cay(G,\mathcal{G})^{[0,\infty)}$
denote the induced subgraph of $\Cay(G,\mathcal{G})$ containing the
vertices   in $\varphi^{-1}([0,\infty))$.
One of the several equivalent definitions of $\Sigma(G)$, see \cite{Str12}, is that $[\varphi]\in S(G)$ belongs to $\Sigma(G)$ if and
only if $\Cay(G,\mathcal{G})^{[0,\infty)}$ is connected for
some, equivalently, for every, finite generating set $\mathcal{G}$ of $G$. The set $\Sigma(G)$ is open in $S(G)$. 
See \cite{BieNeuStr87,
  Bro87, Lev94,Mei90, Str12} for alternate definitions.

\cite[Theorem~B1]{BieNeuStr87} states the following: \emph{if   $N\lhd
  G$, with $G/N$ abelian, then $N$ is finitely generated
  if and only if $S(G,N)\subset\Sigma(G)$}. 
Applied to  $N=[G,G]$, this shows that $\Sigma(G)=S(G)$ if and only if $[G,G]$ is finitely generated. 
  When $N=\ker\varphi$, with 
  $\varphi\from G\onto\Z$, then
$S(G,N)=\{[\varphi],-[\varphi]\}$, so $\ker\varphi$ is
finitely generated for discrete $[\varphi]$ if and
only if both $[\varphi]$ and $-[\varphi]$ are in $\Sigma(G)$.

As shown in 
\cite[Corollary~F]{BieNeuStr87},   results of
    Thurston \cite{Thu86}  imply that, 
  if $G$ is the fundamental group of a compact 3--manifold, then
  $\Sigma(G)=-\Sigma(G)$ and is a disjoint union of finitely many open convex rational polyhedra in $S(G)$.

 By a \emph{splitting} of a group $G$, we will mean a one-edge splitting, i.e.\ a decomposition of $G$ as an amalgam $G_1 *_A G_2$ or an HNN-extension $G_1*_A$. We denote by $\iota_{G_i}$ the natural inclusion from $G_i$ into $G$.
 
  The following lemma says that, if $\varphi:G\to\R$ does not vanish on $A$, and its restriction to each $G_i$ represents an element of $\Sigma(G_i)$, then $[\varphi]$ is in $\Sigma(G)$.

\begin{lemma}\label{amalginSigma}
\hfill
\begin{enumerate}

  \item  Let $G$ be an amalgamated free product $G_1 *_A G_2$, with $G_1,G_2$ 
    finitely generated. Then:
\[S(G,A)^c\cap (\iota_{G_1}^*)^{-1}(\Sigma(G_1))\cap(\iota_{G_2}^*)^{-1}(\Sigma(G_2))\subset\Sigma(G).\]\label{item:amalgam}

\item Let $G$ be an HNN-extension $\langle G_1,t \mid t^{-1}at =
  \sigma(a)\text{ for } a\in A\rangle$ with finitely generated base group $G_1$, 
subgroup $A<G_1$, and injection $\sigma\from  A\into G_1$. Then:
\[S(G,A)^c\cap (\iota_{G_1}^*)^{-1}(\Sigma(G_1))\subset \Sigma(G).\]\label{item:HNN}
\end{enumerate}
\end{lemma}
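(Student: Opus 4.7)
The plan is to use the Cayley graph characterization: $[\varphi]\in\Sigma(G)$ iff $\Cay(G,\mathcal{G})^{[0,\infty)}$ is connected. I would pick finite generating sets $\mathcal{G}_i$ of $G_i$ and work with $\mathcal{G}=\mathcal{G}_1\cup\mathcal{G}_2$, adjoining $t$ in the HNN case. Since $\varphi|_A\neq 0$, fix $a\in A$ with $\varphi(a)>0$. The basic input is left-translation invariance: because $\Cay(G_i,\mathcal{G}_i)^{[0,\infty)}$ is connected, so is $\Cay(G_i,\mathcal{G}_i)^{\geq c}$ for every $c\in\varphi(G_i)$, by translating the base subgraph by any element of $\varphi$-value $c$. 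The task is then to promote this local connectivity on each vertex group to global connectivity on $G$, using powers of $a$ as bridges between cosets.

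For the amalgam, given $g\in G$ with $\varphi(g)\geq 0$, I would put it in normal form $g=w_1\cdots w_n$ with $w_j\in G_{i_j}$ alternating, set $p_j=w_1\cdots w_j$, and for integers $N_1,\dots,N_{n-1}$ (to be chosen large) introduce detour vertices $q_j=p_ja^{N_j}$ with the convention $N_0=N_n=0$, so that $q_0=1$ and $q_n=g$. The element $(q_{j-1})^{-1}q_j=a^{-N_{j-1}}w_ja^{N_j}$ lies in $G_{i_j}$, so $q_{j-1}$ and $q_j$ both sit in the coset $p_{j-1}G_{i_j}$, and I can use $\Sigma(G_{i_j})$ to connect them by a path whose vertices, after left-translation by $p_{j-1}$, all have $\varphi$-value at least $c_j:=\min\bigl(N_{j-1}\varphi(a),\,\varphi(w_j)+N_j\varphi(a)\bigr)$.

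For the concatenated path to live inside $\Cay(G,\mathcal{G})^{[0,\infty)}$, I need $\varphi(p_{j-1})+c_j\geq 0$ for every $j$. For intermediate $j$ this is easy: both $N_{j-1}$ and $N_j$ are free parameters, so $c_j$ can be made as large as desired. The main obstacle is the two boundary cases, and it is there that the hypothesis $\varphi(g)\geq 0$ is actually consumed. At $j=1$, $N_0=0$ forces $c_1\leq 0$; taking $N_1$ large enough makes $c_1=0$ and then $\varphi(p_0)+c_1=0$. At $j=n$, $N_n=0$ forces $c_n\leq\varphi(w_n)$; taking $N_{n-1}$ large enough makes $c_n=\varphi(w_n)$ and then $\varphi(p_{n-1})+c_n=\varphi(g)\geq 0$. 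All constraints on the $N_j$ are linear with right-hand sides depending only on $g$, hence can be satisfied simultaneously.

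The HNN case is analogous, using the normal form $g=g_0t^{\epsilon_1}g_1\cdots t^{\epsilon_k}g_k$ with $g_j\in G_1$. I would insert suitable powers of $a$ on each side of every $t^{\pm 1}$; the relation $t^{-1}at=\sigma(a)$ together with $\varphi(\sigma(a))=\varphi(a)$ ensures that the single $t$-edges connect two vertices with nearly equal $\varphi$-value, both made large by the adjacent $a$-powers, while $\Sigma(G_1)$ handles connectivity within each $G_1$-segment. No deeper input than $\Sigma(G_i)$ and left-translation invariance is required; the real work lies in the bookkeeping at the boundary, where the ends of the path are pinned to $1$ and $g$.
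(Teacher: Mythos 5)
Your proposal is correct and the core mechanism is the same as the paper's: insert powers of a positively-valued element $a\in A$ as detours to raise the $\varphi$-value before crossing between factor cosets, and use $\Sigma(G_i)$ plus left-translation invariance inside each coset. The only differences are organizational: the paper cites Strebel (Lemma~B1.14) for the amalgam case and proves the HNN case by induction on syllable length, consuming one $a$-detour per inductive step, rather than choosing all the exponents $N_j$ at once as you do. One phrase worth tightening in the HNN case: when the next letter is $t^{-1}$ you must insert powers of $\sigma(a)$ (not of $a$) on the preceding $G_1$-segment, since conjugation by $t$ only returns $\sigma(A)$ to $G_1$; the identity $\varphi(\sigma(a))=\varphi(a)$ that you already invoke is exactly what makes this substitution harmless.
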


\begin{proof}
Case (\ref{item:amalgam}) is an immediate corollary of
\cite[Lemma~B1.14]{Str12}.
We prove case (\ref{item:HNN}), which is very similar.    Assume $[\varphi]\in S(G,A)^c\cap (\iota_{G_1}^*)^{-1}(\Sigma(G_1))$.

Take a finite generating set for $G_1$ and append $t$ to get a finite
generating set for $G$. Consider the corresponding Cayley graph.
Since $ [\varphi]\in (\iota_{G_1}^*)^{-1}(\Sigma(G_1))$, it follows trivially from the definitions that for every $g\in G$
  and $b_1,b_2$ in $G_1$   there exists a path from $gb_1$ to
  $gb_2$ in $gG_1$ such that every vertex $x$ along the path satisfies
  $\varphi(x)\geq\min(\varphi(gb_1),\varphi(gb_2))$.

An element $g\in G$ can be expressed $g=b_0t^{\varepsilon_1}b_1\cdots
t^{\varepsilon_m}b_m$ with $\varepsilon_i=\pm 1$ and $b_i\in G_1$. 
The minimal such $m$ is called the \emph{syllable length of $g$}.
We prove by induction on the syllable length that if $\varphi(g)\geq 0$ then there
exists a   $\varphi$--non-negative path  from 1 to $g$, i.e.\ a path  such that every vertex $x$ on the path
satisfies $\varphi(x)\geq 0$. This claim implies $[\varphi]\in\Sigma(G)$.

The claim is true for syllable length 0 by the hypothesis that
$[\varphi|_{G_1}]\in\Sigma(G_1)$.
Now suppose $g$ has syllable length   $m>0$ and $\varphi(g)\geq 0$, and
suppose that the claim is true for shorter syllable length.
Let $g=b_0t^{\varepsilon_1}b_1\cdots
t^{\varepsilon_{m}}b_{m}$
and $g'=b_0t^{\varepsilon_1}b_1\cdots
t^{\varepsilon_{m-1}}b_{m-1}$.
Since $[\varphi]\notin S(G,A)$ we can choose $a\in A$ such that $\varphi(a)\geq\max(-\varphi(g'),-\varphi(g't^{\varepsilon_m}))$.

If $\varepsilon_m=1$ then $g=g'aa^{-1}tb_m=g'at\sigma(a^{-1})b_m$, where
$\varphi(g'a)$ and $\varphi(g'at)$ are non-negative and $g'a$ has syllable length   $<m$.
By the induction hypothesis there is a $\varphi$--non-negative path
from 1 to $g'a$. 
Concatenating the $t$--edge gives a $\varphi$--non-negative path from
1 to $g'at$.
Finally, the   remark  of the second paragraph implies that we can concatenate
this path with a path in $g'atG_1$ from
$g'at$ to    $g=g'at\sigma(a^{-1})b_m$ to get a $\varphi$--non-negative
path from 1 to $g$. 

If $\varepsilon_m=-1$ the argument is similar using the relation
$g=g't^{-1}aa^{-1}b_m=g'\sigma(a)t^{-1}a^{-1}b_m$ and the fact that $\varphi(\sigma(a))=\varphi(a)$.   The path passes through $g'\sigma(a)$ and $g'\sigma(a)t^{-1}$.
\end{proof}

\begin{corollary} \label{dansS}
Let $G$ be the fundamental group of a finite graph of groups, with every vertex group finitely generated. Let $\varphi:G\to \R$ be a homomorphism. 
Assume:
\begin{enumerate}
\item
  $\varphi $ is non-trivial on every edge group, and 
\item  $[\varphi_{ | G_v}]$ belongs to $\Sigma(G_v)$ for every vertex group $G_v$.
\end{enumerate}
Then $[\varphi]\in \Sigma(G)$.
\end{corollary}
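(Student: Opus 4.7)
The plan is to induct on the number of edges of the underlying finite graph of $\Gamma$, peeling off one edge at a time and applying \fullref{amalginSigma}. The base case, when $\Gamma$ has no edges, is trivial: $G$ coincides with the single vertex group and the hypothesis $[\varphi|_{G_v}]\in\Sigma(G_v)$ is exactly the conclusion.

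For the inductive step, pick any edge $e$ of $\Gamma$, with edge group $G_e$. If $e$ is separating, then removing it decomposes $\Gamma$ into two subgraphs of groups $\Gamma_1$ and $\Gamma_2$ and yields a one-edge splitting $G=G_1*_{G_e}G_2$ with $G_i=\pi_1(\Gamma_i)$. If $e$ is non-separating (in particular if $e$ is a loop), then removing it yields a single connected graph of groups $\Gamma_1$ with $G=G_1*_{G_e}$ an HNN-extension, the stable letter corresponding to $e$. In either case each $G_i$ is finitely generated, as the fundamental group of a finite graph of finitely generated groups, and the hypotheses of the corollary restrict to $(\Gamma_i,\varphi|_{G_i})$: every edge and vertex group of $\Gamma_i$ already appears in $\Gamma$, and $\varphi|_{G_i}$ is non-trivial because it is non-trivial on $G_e\subset G_i$. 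So $[\varphi|_{G_i}]$ is a well-defined point of $S(G_i)$, and by induction it lies in $\Sigma(G_i)$. Since $\varphi(G_e)\neq 0$ gives $[\varphi]\in S(G,G_e)^c$, applying case (1) or (2) of \fullref{amalginSigma} accordingly yields $[\varphi]\in\Sigma(G)$ and completes the induction.

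The only point to watch is that the inductive hypotheses remain valid at each stage: finite generation of $G_i$, well-definedness of $[\varphi|_{G_i}]$, and survival of both the non-vanishing on edge groups and the $\Sigma$-membership on vertex groups under restriction. All three are immediate from the setup, so no serious obstacle arises; \fullref{amalginSigma} does essentially all the geometric work, and the corollary is its natural iteration across the graph.
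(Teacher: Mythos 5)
Your proof is correct and takes essentially the same approach as the paper, which proves the corollary by the identical induction on the number of edges, writing $G$ as an amalgam or HNN-extension over a chosen edge and invoking \fullref{amalginSigma}. The paper's proof is a single sentence; yours fills in the routine verifications (finite generation of the factors, non-vanishing of $\varphi$ on them via $G_e$, persistence of the hypotheses under restriction), all correctly.
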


\begin{proof}
 By induction on the number of edges, writing $G$ as an amalgam or an HNN-extension of groups which are fundamental groups of graphs of groups with fewer edges.
\end{proof}

We now study the converse, so we assume $[\varphi]\in \Sigma(G)$ and we consider restrictions of $\varphi$ to edge and vertex groups. To deduce (1), we simply need some non-degeneracy assumptions.

\begin{definition}
A graph of groups $\Gamma$ is said to be \emph{reduced} if, given an edge $e$ with distinct endpoints $v_1,v_2$, the inclusions $G_e\into G_{v_i}$ are proper.
\end{definition}

  In particular, an HNN-extension is always reduced. An amalgam $G_1*_AG_2$ is reduced if and only if it is non-trivial ($A\ne G_1,G_2$).

If $\Gamma$ is not reduced, we may make it reduced by iteratively collapsing edges making it non-reduced. This does not introduce new  vertex groups since $G_{v_1}*_{G_{v_1}}G_{v_2}$ is isomorphic to $G_{v_2}$.

\begin{definition}
An HNN extension $G=\langle G_1,t\mid t^{-1}at=\sigma(a)\text{ for
}a\in A\rangle$ is called \emph{ascending} if $A=G_1$ and \emph{descending}
if $\sigma(A)=G_1$.
It is \emph{strictly ascending} if it is ascending and not descending.
\end{definition}

Replacing the stable letter $t$ by its inverse reverses
ascending/descending.   We say that a graph of groups $\Gamma$ is not an ascending HNN-extension if it is not an HNN extension (it has more than one edge or more than  one vertex), or it is an HNN-extension but both $A$ and $\sigma(A)$ are proper subgroups of $G_1$.

\begin{proposition}
\label{amalgnotinSigma} 
Let $G$ be the fundamental group of a finite reduced graph of groups $\Gamma$, with $G$ finitely generated. Assume that $\Gamma$ is not an ascending HNN-extension. If $[\varphi]\in \Sigma(G)$, then   $\varphi $ is non-trivial on every edge group.
\end{proposition}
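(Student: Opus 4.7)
The strategy is to argue the contrapositive: assuming $\varphi(A)=0$ for some edge group $A=G_{e_0}$, I derive $[\varphi]\notin\Sigma(G)$.

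First I would reduce to a single-edge graph of groups by collapsing every edge of $\Gamma$ other than $e_0$. This preserves the fundamental group $G$ and the edge group $A$; the inclusions of $A$ into the (enlarged) vertex groups at the endpoints of $e_0$ remain proper because they were already so by reducedness of $\Gamma$, and the non-ascending hypothesis at a loop is preserved for the same reason. So we may assume $G=G_1*_A G_2$ with $A\subsetneq G_i$, or $G=G_1*_A$ with $A,\sigma(A)\subsetneq G_1$.

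The core step is a quotient argument. In the amalgam case, let $N:=\langle\langle A\rangle\rangle_G$; then $\bar G:=G/N\cong H_1*H_2$ with $H_i:=G_i/\langle\langle A\rangle\rangle_{G_i}$ nontrivial finitely generated groups. In the HNN case, let $N:=\langle\langle A\cup\sigma(A)\rangle\rangle_G$; then $\bar G\cong(G_1/K)*\Z$ with $K:=\langle\langle A\cup\sigma(A)\rangle\rangle_{G_1}$, which---provided $K\neq G_1$---is a free product of two nontrivial finitely generated groups. Because $N\subseteq\ker\varphi$, $\varphi$ descends to a nonzero $\bar\varphi\from\bar G\to\R$. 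Two standard facts now conclude: (a) monotonicity of $\Sigma$ under quotients by subgroups of $\ker\varphi$, which gives $[\bar\varphi]\in\Sigma(\bar G)$ from $[\varphi]\in\Sigma(G)$ by projecting $\varphi$-non-negative paths in a Cayley graph; and (b) the classical vanishing $\Sigma(H_1*H_2)=\emptyset$ whenever $H_1,H_2$ are nontrivial and finitely generated. Together these contradict $[\varphi]\in\Sigma(G)$.

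There remains the degenerate HNN sub-case $K=G_1$, in which $\bar G\cong\Z$ and fact (b) does not directly apply. Here I would pass to the normal index-two subgroup $G':=\ker(G\twoheadrightarrow\Z/2,\,t\mapsto 1)$: Bass--Serre theory applied to the $G'$-action on the Bass--Serre tree of $G$ realizes $G'$ as the fundamental group of a two-vertex graph of groups with vertex groups $G_1$ and $tG_1t^{-1}$, inside which a one-edge sub-splitting over a conjugate of $A$---reduced by the non-ascending hypothesis---falls into the non-degenerate range of the main argument. Applying that argument to $G'$ gives $[\varphi|_{G'}]\notin\Sigma(G')$, and finite-index invariance of $\Sigma$ transfers the conclusion back to $[\varphi]\notin\Sigma(G)$.

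The main obstacle is fact (b): showing $\Sigma(H_1*H_2)=\emptyset$ for nontrivial finitely generated $H_i$. A direct proof takes generators $\mathcal{S}_1\sqcup\mathcal{S}_2$ of the free product, chooses any nonzero $\varphi$, and exhibits an element $g$ with $\varphi(g)\geq 0$ whose reduced free-product normal form forces every word in $\mathcal{S}_1\cup\mathcal{S}_2$ representing $g$ to pass through intermediate $H_i$-cosets at which the accumulated $\varphi$-value must dip below zero. A secondary nuisance is the Bass--Serre bookkeeping in the degenerate HNN sub-case, where one must verify that the one-edge sub-splitting produced inside the index-two cover is reduced and falls into the non-degenerate range.
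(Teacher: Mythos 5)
Your reduction to the one-edge case is the same as the paper's; the paper then simply cites Proposition~C2.13 of Strebel's monograph, while you try to prove the one-edge case directly by quotienting out $N=\langle\langle A\rangle\rangle_G$. This is where the gap lies. In the amalgam case you assert that $H_i=G_i/\langle\langle A\rangle\rangle_{G_i}$ is nontrivial, but reducedness only says $A\subsetneq G_i$, which does not prevent $A$ from normally generating $G_i$. Take $G_1=S_3$ with $A=\Z/2$ a transposition subgroup (so $H_1=1$), $G_2=\Z\times\Z/2$ with $A$ its torsion factor, and $\varphi$ projecting $G_2$ onto $\Z$; then $\bar G\cong H_2\cong\Z$ and $[\bar\varphi]\in\Sigma(\Z)=S(\Z)$, so your facts (a) and (b) produce no contradiction, even though $[\varphi]\notin\Sigma(G)$ does hold here (it follows, e.g., from \fullref{graphofvarsigmagroups}, since $\varphi$ kills the edge group).

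You noticed the analogous degeneracy in the HNN case but not in the amalgam case, and the index-two fix does not actually close it: take $G_1=A_5$ with $A,\sigma(A)$ two order-two subgroups, so $K=G_1$. The one-edge splitting you obtain inside $G'$ has base $A_5*_{\Z/2}A_5$ with edge groups again of order two, and by simplicity of $A_5$ these still normally generate the whole base, so the quotient is again cyclic and iterating the construction never escapes the degenerate range. Closing these gaps needs an argument that does not factor through $G/\langle\langle A\rangle\rangle_G$ --- e.g.\ one working with reduced words in the amalgam/HNN or with the $\ker\varphi$-action on the Bass--Serre tree, which is essentially what a direct proof of Strebel's C2.13 does.
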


\begin{proof}
 The special case when $\Gamma$ has a single edge is Proposition~C2.13 of {\cite{Str12}}. Given any edge $e$ of $\Gamma$, collapse all other edges. The resulting graph of groups is not an ascending HNN-extension because $\Gamma$ is reduced, so we may   apply the special case to conclude that $\varphi$ is non-trivial on $G_e$.
\end{proof}

\begin{rem*}The proposition is wrong in the case of an ascending
  HNN-extension (see \fullref{sasc}).
Rather than assuming that $\Gamma$ is reduced and not an ascending HNN-extension, one could assume that its Bass-Serre tree is minimal and irreducible. 
\end{rem*}

The following example
shows that some more serious restriction is necessary in order to deduce the statement $[\varphi_{ | G_v}]\in\Sigma(G_v)$ (item (2) in \fullref{dansS}) from the assumption $[\varphi]\in\Sigma(G)$.

\begin{example} View  $G=\F_2\times\F_2$ as the amalgam of $\F_2\times\Z=\langle a,b\rangle \times \langle c\rangle$ with $\F_2\times\Z=\langle a,b\rangle \times \langle d\rangle$
over $\F_2=\langle a,b \rangle$. If $\varphi:G\to\R$ kills $d$ but none of $a,b,c$, then $[\varphi]\in\Sigma(G)$ but $[\varphi_{ | G_v}]\notin\Sigma(G_v)$ for $G_v=\langle a,b\rangle \times \langle d\rangle$ (see Theorem 7.4 of   \cite{BieNeuStr87}
 or Proposition A2.7 of  \cite{Str12}  for $\Sigma$ of a direct
 product). 
This may be generalized to groups of the form $A\times \F_2$ with $\Sigma(A)\ne S(A)$.
\end{example}

\begin{proposition}\label{inSigmavert} 
Let $G$ be the fundamental group of a finite 
graph of groups $\Gamma$, with all edge and vertex groups  finitely generated. 
Assume that $[\varphi]\in \Sigma(G)$,  and $[\varphi_{ | G_e}]\in\Sigma(G_e)$ for every edge group $G_e$ (in particular,  $\varphi{( G_e)}\ne0$). Then 
 $[\varphi_{ | G_v}]\in\Sigma(G_v)$ for every vertex group $G_v$.
\end{proposition}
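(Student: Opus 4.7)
My plan is to induct on the number of edges of $\Gamma$, reducing to a one-edge base case. For the inductive step, pick any edge $e$ of $\Gamma$ and collapse all other edges; this produces a one-edge graph of groups $\Gamma''$ with $G=\pi_1(\Gamma'')$, whose two vertex groups (or one, if $e$ is a loop) are fundamental groups of proper subgraphs of $\Gamma$. These are finitely generated, being generated by finitely many vertex groups of $\Gamma$ together with finitely many stable letters. The base case applied to $\Gamma''$ gives $\Sigma$-membership of $[\varphi]$ on the vertex groups of $\Gamma''$, and the inductive hypothesis applied to each of them (each is $\pi_1$ of a graph of groups with strictly fewer edges, inheriting all our hypotheses on edge groups) then yields $[\varphi_{|G_v}]\in\Sigma(G_v)$ for every vertex group of $\Gamma$.

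The heart of the argument is the base case, handled via the Bass-Serre tree $T$. I would fix a generating set $S$ for $G$ built from finite generating sets of each vertex group, sharing a common subset $S_A$ that generates the edge group $A=G_e$. For a vertex $v\in T$ with stabilizer $G_v$, and $h_1,h_2\in G_v$ with $\varphi(h_i)\geq 0$, the hypothesis $[\varphi]\in\Sigma(G)$ furnishes a $\varphi$-non-negative path $P$ from $h_1$ to $h_2$ in $\Cay(G,S)$. Project $P$ to $T$ via $g\mapsto gv$: generators stabilizing $v$ fix the projection, while others move it along $T$. Since $P$ starts and ends in $G_v$, the projection is a closed walk at $v$, and since $T$ is a tree, it admits an innermost backtrack.

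The key reduction is the analysis of an innermost backtrack. Such a backtrack corresponds to a subsegment of $P$ that leaves $v$, wanders in a neighboring vertex $V'$, and returns along the same tree edge. A short coset computation (using $G_1\cap G_2=A$ in the amalgam case, or the HNN identity $G_1t\cap tG_1=At$, equivalently $t\sigma(a)t^{-1}=a$, in the HNN case) identifies the net effect on Cayley vertices: the subsegment connects some $g$ to $ga$ for a single $a\in A$. Invoking $[\varphi_{|A}]\in\Sigma(A)$ together with the elementary observation that $\Sigma$-membership of $\chi$ yields connectivity of $\chi^{-1}([c,\infty))$ between any two points with $\chi$-value $\geq c$ (via translation by the lower of the two endpoints), I would replace this subsegment with a $\varphi$-non-negative path in $g\cdot A\subset g\cdot G_v$; the shift preserves non-negativity because both $\varphi(g)\geq 0$ and $\varphi(ga)\geq 0$ hold, as $g$ and $ga$ lie on $P$. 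This strictly reduces the complexity of the $T$-projection, so iterating produces a modified path lying entirely in the coset of $G_v$; by construction it uses only generators of $G_v$, supplying the desired path in $\Cay(G_v,S_v)$.

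The main obstacle is the bookkeeping in this key reduction step: verifying cleanly that an innermost backtrack in $T$ pulls back to a subsegment of $P$ whose net effect is multiplication by an edge-group element, and that the $A$-shortcut's $\varphi$-values are genuinely non-negative rather than merely bounded below by some $c<0$. Once these details are handled, no separate treatment is needed for the ascending HNN case, since the tree argument does not rely on non-ascendancy.
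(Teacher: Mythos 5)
Your proposal is correct and follows essentially the same approach as the paper: reduce by induction to a one-edge graph of groups, then take a $\varphi$-non-negative path in the Cayley graph (for a generating set built from generating sets of the vertex groups extending a generating set of the edge group) and iteratively replace subpaths representing edge-group elements with $\varphi$-non-negative paths inside the edge group, using $[\varphi|_{G_e}]\in\Sigma(G_e)$. The paper locates these ``pinches'' via normal-form theory for amalgams and Britton's lemma rather than via innermost backtracks of the projection to the Bass-Serre tree, but these are equivalent ways of packaging the same combinatorial fact, and the paper likewise makes no non-ascending-HNN assumption here.
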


\begin{proof}
As in the proof of \fullref{dansS}, it suffices to prove the result when $\Gamma$ has a single edge:  $G=G_1 *_A G_2$ or  $G=\langle G_1,t \mid t^{-1}at =
  \sigma(a)\text{ for } a\in A\rangle$.
Since $\varphi$ does not vanish on $A$,  
it does not vanish on $G_i$, so $[\varphi|_{G_i}]\in S(G_i)$. 

First suppose $G=G_1*_A G_2$. 
Fix a finite generating set $\mathcal{A}$ of $A$, and choose finite generating sets
$\mathcal{G}_i$ for $G_i$   extending the respective images of $\mathcal{A}$, with $\mathcal{G}_i\setminus\mathcal{A} \subset  G_i\setminus A$.
Their union $\mathcal{G}=\mathcal{G}_1\cup\mathcal{G}_2$ is a finite generating set for $G$.

Take two vertices $g$ and $h$ in 
$\Cay(G_1,\mathcal{G}_1)^{[0,\infty)}$. We have to join them by a path contained in $\Cay(G_1,\mathcal{G}_1)^{[0,\infty)}$.

There exists a path $p$ joining them in
$\Cay(G,\mathcal{G})^{[0,\infty)}$, since $[\varphi]\in\Sigma(G)$.  
Write labels along $p$ as $g_1g_2\dots g_k$ with $g_i\in\mathcal{G}$.
If every $g_i$ is in $\mathcal{G}_1$, then $p$ is contained in $\Cay(G_1,\mathcal{G}_1)$ and we are done. Otherwise, by standard facts about free products with amalgamation, there exists a subpath  $g_{i+1}\dots g_j$ containing an edge with label in  $\mathcal{G}_2\setminus\mathcal{A}$ and representing an element of $A$.

Since $\mathcal{G}_1$ is an extension of $\mathcal{A}$, and
$\Cay(A,\mathcal{A})^{[0,\infty)}$ is connected because $[\varphi|_{A}]\in
 \Sigma(A)$, we may replace the
subpath $g_{i+1}\dots g_j$ of $p$ by a subpath contained in
$\Cay(A,\mathcal{A})^{[0,\infty)}\subset \Cay(G_1,\mathcal{G}_1)^{[0,\infty)}$.
This reduces the number of labels of $p$ not in $\mathcal{G}_1$.

Repeat this process until all labels of $p$ are in $\mathcal{G}_1$.
This gives a 
  path from $g$ to $h$ in
$\Cay(G_1,\mathcal{G}_1)^{[0,\infty)}$, so $[\varphi|_{G_1}]\in\Sigma(G_1)$. 
The same argument applies to $G_2$.

In the HNN extension case, choose a finite generating set $\mathcal{G}_1$
of $G_1$ extending $\mathcal{A}\cup\sigma(\mathcal{A})$.
Let $\mathcal{G}=\mathcal{G}_1\cup\{t\}$, and apply the same argument as
in the amalgamated product case (using Britton's lemma) to conclude $[\varphi|_{G_1}]\in\Sigma(G_1)$.
\end{proof}

Propositions \ref{amalgnotinSigma}  and \ref{inSigmavert}  do not provide a complete converse of \fullref{dansS}, because   \fullref{amalgnotinSigma} only guarantees    
$\varphi(G_e)\ne0$,
not $[\varphi_{ | G_e}]\in\Sigma(G_e)$. This motivates the following definition.

\begin{definition}\label{def:varsigma}
  Let $\varsigma$ be the class of finitely generated groups $G$ with
  $\Sigma(G)=S(G)$.
\end{definition}
Equivalently, $\varsigma$ is the class of finitely generated groups
with finitely generated commutator subgroup    \cite{BieNeuStr87}.

Recall that a group is \emph{slender} if every subgroup is finitely
generated.
Slender groups belong to $\varsigma$.
Examples of slender groups include finitely generated virtually
abelian groups and virtually polycyclic groups.

\fullref{dansS} and Propositions \ref{amalgnotinSigma} and \ref{inSigmavert} now imply:
\begin{corollary} \label{varsigmaamalgam}
Let $G$ be the fundamental group of a finite reduced graph of groups $\Gamma$, with $G$ finitely generated and edge groups in $\varsigma$.
 Assume that $\Gamma$ is not an ascending HNN-extension. Then  $[\varphi]\in \Sigma(G)$ if and only if    $\varphi $ is non-trivial on every edge group and  $[\varphi_{ | G_v}]\in\Sigma(G_v)$ for every vertex group $G_v$:
\[\pushQED{\qed} \Sigma(G)=\bigcap_{e\in\mathcal{E}\Gamma}S(G,G_e)^c\cap\bigcap_{v\in\mathcal{V}\Gamma}(\iota_{G_v}^*)^{-1}(\Sigma(G_v)).\hfill\qedhere\popQED\]
\end{corollary}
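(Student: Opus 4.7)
The statement is a direct synthesis of the three preceding results (\fullref{dansS}, \fullref{amalgnotinSigma}, and \fullref{inSigmavert}), packaged using the hypothesis that edge groups lie in $\varsigma$. The plan is to prove each direction by citing the appropriate result, with the $\varsigma$ hypothesis serving as the bridge that lets \fullref{inSigmavert} apply.

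For the ``if'' direction, suppose $\varphi$ is non-trivial on every edge group and $[\varphi_{|G_v}]\in\Sigma(G_v)$ for every vertex group $G_v$. Then both hypotheses of \fullref{dansS} are met verbatim, and that corollary immediately yields $[\varphi]\in\Sigma(G)$. Note that the hypothesis on edge groups is not used in this direction, and neither is the assumption that $\Gamma$ is reduced and not an ascending HNN-extension.

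For the ``only if'' direction, suppose $[\varphi]\in\Sigma(G)$. Because $\Gamma$ is reduced and not an ascending HNN-extension, \fullref{amalgnotinSigma} tells us that $\varphi$ is non-trivial on every edge group $G_e$. Here the $\varsigma$ hypothesis enters: since $G_e\in\varsigma$, we have $\Sigma(G_e)=S(G_e)$, and hence the non-zero restriction $\varphi|_{G_e}$ automatically represents a class $[\varphi|_{G_e}]\in\Sigma(G_e)$. This places us exactly in the setting of \fullref{inSigmavert}, whose conclusion gives $[\varphi_{|G_v}]\in\Sigma(G_v)$ for every vertex group $G_v$.

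Combining the two directions yields the displayed equality
\[\Sigma(G)=\bigcap_{e\in\mathcal{E}\Gamma}S(G,G_e)^c\cap\bigcap_{v\in\mathcal{V}\Gamma}(\iota_{G_v}^*)^{-1}(\Sigma(G_v)),\]
since the first intersection expresses the non-triviality of $\varphi$ on each edge group and the second expresses the condition on vertex groups. There is no real obstacle here; the work has already been done in the previous three results, and the only subtlety is noticing that the $\varsigma$ hypothesis on edge groups is precisely what is needed to upgrade the conclusion ``$\varphi(G_e)\ne 0$'' from \fullref{amalgnotinSigma} to the hypothesis ``$[\varphi|_{G_e}]\in\Sigma(G_e)$'' required by \fullref{inSigmavert}.
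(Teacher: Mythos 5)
Your proof is correct and follows exactly the paper's reasoning: the paper states the corollary as an immediate consequence of Corollary~\ref{dansS}, Proposition~\ref{amalgnotinSigma}, and Proposition~\ref{inSigmavert}, with the $\varsigma$ hypothesis on edge groups supplying precisely the upgrade from ``$\varphi(G_e)\ne 0$'' to ``$[\varphi|_{G_e}]\in\Sigma(G_e)$'' that you identify. One small caveat to your aside: the edge-group hypothesis is not entirely unused in the ``if'' direction, since edge groups in $\varsigma$ (hence finitely generated) together with $G$ finitely generated is what guarantees the vertex groups are finitely generated, which Corollary~\ref{dansS} requires.
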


Note that all vertex groups are finitely generated, so $\Sigma(G_v)$ is defined. 

\begin{corollary} \label{graphofvarsigmagroups}
Let $G$ be the fundamental group of a finite reduced graph of groups $\Gamma$, with every vertex group in $\varsigma$. 
 Assume that $\Gamma$ is not an ascending HNN-extension. Then  $[\varphi]\in \Sigma(G)$ if and only if    $\varphi $ is non-trivial on every edge group. 
In particular, 
\[\Sigma(G)=-\Sigma(G)=\bigcap_{e\in\mathcal{E}\Gamma}S(G,G_e)^c\] is
the complement of a finite number of rationally defined subspheres. 
\qed
\end{corollary}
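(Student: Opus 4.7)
The plan is to combine \fullref{dansS} with \fullref{amalgnotinSigma}, exploiting the key observation that $G_v\in\varsigma$ forces $\Sigma(G_v)=S(G_v)$, so the vertex-group condition appearing in \fullref{dansS} collapses to merely requiring $\varphi_{|G_v}\neq 0$.

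The $(\Rightarrow)$ direction is then immediate: under the standing hypotheses (reduced, not an ascending HNN-extension), \fullref{amalgnotinSigma} says that $[\varphi]\in\Sigma(G)$ forces $\varphi_{|G_e}$ to be nontrivial for every edge $e$.

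For $(\Leftarrow)$, suppose $\varphi(G_e)\neq 0$ for every edge. To apply \fullref{dansS} I need $[\varphi_{|G_v}]\in\Sigma(G_v)$ for each vertex $v$. The underlying graph of $\Gamma$ is connected and finite, so either $\Gamma$ is a single vertex with no edges---in which case $G=G_v\in\varsigma$ and the statement is vacuous---or every vertex $v$ is incident to some edge $e$. In the latter case $G_e\subset G_v$ and $\varphi(G_e)\neq 0$ force $\varphi_{|G_v}\neq 0$, so $[\varphi_{|G_v}]\in S(G_v)=\Sigma(G_v)$. \fullref{dansS} then yields $[\varphi]\in\Sigma(G)$.

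For the ``in particular'' claim, the characterization ``$\varphi$ nonzero on every edge group'' is symmetric under $\varphi\leftrightarrow-\varphi$, so $\Sigma(G)=-\Sigma(G)=\bigcap_{e\in\mathcal{E}\Gamma}S(G,G_e)^c$. Each $S(G,G_e)$ consists of classes vanishing on $G_e$; since $\Z^{b_1(G)}$ is Noetherian, the image of $G_e$ in $G^{\mathrm{ab}}/\mathrm{torsion}\cong\Z^{b_1(G)}$ is a finitely generated subgroup, cut out by finitely many integer linear forms, so $S(G,G_e)$ is a rationally defined subsphere, and there are finitely many of them since $\mathcal{E}\Gamma$ is finite. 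I do not anticipate any real obstacle; the essential point is simply that membership of vertex groups in $\varsigma$ trivializes the vertex-group condition in \fullref{dansS}, after which the statement assembles itself from the two preceding results together with an elementary observation about rationality of the exceptional subspheres.
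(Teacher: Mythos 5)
Your proof is correct and follows essentially the same route as the paper: the paper derives this corollary directly from Corollary~\ref{dansS} and Proposition~\ref{amalgnotinSigma}, with the observation that $G_v\in\varsigma$ makes condition (2) of Corollary~\ref{dansS} equivalent to $\varphi_{|G_v}\neq 0$, which in turn follows from non-vanishing on an incident edge group. One tiny nitpick: in the single-vertex, no-edge case the statement is not ``vacuous'' but rather the assertion $\Sigma(G)=S(G)$, which holds precisely because $G=G_v\in\varsigma$; your argument handles it, only the word choice is imprecise.
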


This applies in particular to graphs of virtually polycyclic groups.

\begin{remark}\label{sasc}
If  $G=\langle G_1,t \mid t^{-1}at =
  \sigma(a)\text{ for } a\in G_1\rangle$ is an \emph{ascending}
  HNN-extension then $S(G,G_e)$ consists of two points, the projective classes containing $\varphi_\pm\from
  G\to\Z$ defined by  $\varphi(G_1)=0$ and $\varphi(t)=\pm1$. 
When  $G_1$ is finitely generated, $\Sigma(G)$ contains $[\varphi_+]$, and  contains 
  $[\varphi_-]$ if and only if the extension is not strictly ascending    (Proposition 4.4 of \cite{BieNeuStr87}).   In particular,  \emph{$\Sigma(G)\neq-\Sigma(G)$ if $G$ is a strictly ascending HNN-extension with finitely generated base group.}
  \end{remark}
  We thus have:

\begin{corollary} \label{asc}
  If $\Sigma(G)\ne-\Sigma(G)$, in particular 
if $G$ is a strictly ascending HNN-extension with finitely generated base group, 
then every decomposition of $G$ as the fundamental group of a   finite reduced
graph of groups with vertex  
  groups in $\varsigma$ is a strictly ascending HNN-extension. \qed
\end{corollary}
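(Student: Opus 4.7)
The plan is to prove the contrapositive: suppose $G$ admits a decomposition as a finite reduced graph of groups $\Gamma$ with vertex groups in $\varsigma$ that is \emph{not} a strictly ascending HNN-extension, and deduce $\Sigma(G)=-\Sigma(G)$. I split into two cases according to whether $\Gamma$ is an ascending HNN-extension.

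If $\Gamma$ is not an ascending HNN-extension at all, then \fullref{graphofvarsigmagroups} applies directly and immediately gives $\Sigma(G)=-\Sigma(G)$.

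The remaining case is the subtle one: $\Gamma$ is an ascending HNN-extension $\langle G_1,t\mid t^{-1}at=\sigma(a),\ a\in G_1\rangle$ that fails to be strictly ascending, so $\sigma$ is an automorphism of $G_1$. Here the edge group $A=G_1$ coincides with the vertex group and therefore also lies in $\varsigma$, so in particular $\Sigma(G_1)=S(G_1)$. I will show $\Sigma(G)=S(G)$, which is trivially symmetric. Take any $[\varphi]\in S(G)$. If $\varphi(G_1)\neq 0$, the HNN case of \fullref{amalginSigma} applies: the hypothesis $[\varphi|_{G_1}]\in\Sigma(G_1)$ is automatic, so $[\varphi]\in\Sigma(G)$. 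If instead $\varphi(G_1)=0$, then $[\varphi]$ is one of the two classes $[\varphi_\pm]$ described in \fullref{sasc}, and both belong to $\Sigma(G)$ precisely because the extension is not strictly ascending.

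I expect the main obstacle to be this second case, since \fullref{graphofvarsigmagroups} explicitly excludes ascending HNN-extensions and thus cannot be applied directly. The crucial observation that unlocks it is that when $\sigma$ is an automorphism the edge group coincides with the vertex group and therefore lies in $\varsigma$; this lets \fullref{amalginSigma} handle every class non-trivial on $G_1$, while \fullref{sasc} handles the two exceptional classes vanishing on $G_1$, so together they cover all of $S(G)$.
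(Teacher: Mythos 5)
Your proof is correct and fills in exactly what the paper's terse \texttt{\textbackslash qed} leaves implicit: the first case is immediate from \fullref{graphofvarsigmagroups}, and for the non-strictly-ascending HNN case you correctly observe that the edge group equals the vertex group $G_1\in\varsigma$, so that \fullref{amalginSigma}(\ref{item:HNN}) covers every class non-trivial on $G_1$ while \fullref{sasc} covers the two classes vanishing on $G_1$, giving $\Sigma(G)=S(G)$. This is the same contrapositive, two-case argument the paper intends.
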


\section{Generalized Baumslag-Solitar Groups}\label{sec:gbs}
Let $G$ be as in 
\fullref{graphofvarsigmagroups}. Assume furthermore that all
the inclusions   $G_e\into G_{v }$ map $G_e$ onto a finite index subgroup  (this is equivalent to the Bass-Serre tree of $\Gamma$ being locally finite). Then all edge and vertex groups are commensurable   (the  intersection of any two has finite index in both), so $\Sigma(G)= S(G,H)^c $ with $H$ any vertex or edge group. Moreover, a homomorphism $\varphi:G\to \R$ represents an element of $S(G,H)$ if and only if it factors through the topological fundamental group of $\Gamma$. We thus get:

\begin{corollary} Let $G$ be as in 
\fullref{graphofvarsigmagroups}, with all edge and vertex groups
commensurable. 
If $H$ is any vertex or edge group, then $\Sigma(G)= S(G,H)^c $ is the complement of a rationally defined subsphere whose dimension is $b-1$, with $b$ the first Betti number of the graph $\Gamma$. \qed
\end{corollary}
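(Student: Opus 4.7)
The plan is to build on \fullref{graphofvarsigmagroups}, which already gives $\Sigma(G)=\bigcap_{e\in\mathcal{E}\Gamma}S(G,G_e)^c$, and then exploit commensurability to collapse this intersection to a single subsphere $S(G,H)^c$ and compute its dimension.

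First I would observe that if $H_1,H_2<G$ are commensurable and $\varphi\from G\to\R$ is any homomorphism, then $\varphi(H_1)=0$ iff $\varphi(H_2)=0$: both restrictions vanish iff $\varphi$ vanishes on the finite-index subgroup $H_1\cap H_2$ of each, and vanishing on a finite-index subgroup forces vanishing on the whole subgroup because $\R$ is torsion-free and divisible. Hence $S(G,H_1)=S(G,H_2)$. Under the hypothesis that every edge inclusion has finite-index image, any two edge groups, and any edge group with any vertex group, are commensurable; so for any chosen vertex or edge group $H$, all the subspheres $S(G,G_e)$ coincide with $S(G,H)$. \fullref{graphofvarsigmagroups} then rewrites as $\Sigma(G)=S(G,H)^c$, and since $\Sigma(G)=-\Sigma(G)$ already, no separate symmetry argument is needed.

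Next I would identify $S(G,H)$ explicitly. By the commensurability remark, a nonzero $\varphi\from G\to\R$ lies in $S(G,H)$ iff $\varphi$ kills every vertex group. Such $\varphi$ factor through the quotient of $G$ by the normal closure $N$ of all the vertex subgroups, and by the structure of fundamental groups of graphs of groups this quotient is the topological fundamental group of the underlying graph $\Gamma$, a free group of rank $b=b_1(\Gamma)$. Thus the subspace of $\Hom(G,\R)$ carrying $S(G,H)$ is isomorphic to $\Hom(\pi_1(\Gamma),\R)\cong\R^b$, which gives a subsphere of dimension $b-1$.

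Finally, rationality: since $H$ is finitely generated (it lies in $\varsigma$), the condition $\varphi|_H=0$ is cut out in $\Hom(G,\R)\subset\R^{|\mathcal{G}|}$ by finitely many equations of the form $\varphi(h_i)=0$ with $h_i$ integer words in a generating set of $G$; these are linear equations with integer coefficients, so $S(G,H)$ is rationally defined. I do not expect any serious obstacle here; the only subtle point is the commensurability-implies-equal-annihilator step, which uses divisibility of $\R$ in an essential way, and the identification of the quotient $G/N$ with the free group $\pi_1(\Gamma)$, which is standard Bass--Serre theory.
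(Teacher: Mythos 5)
Your proof is correct and follows essentially the same route as the paper: use commensurability of all edge and vertex groups to collapse the intersection $\bigcap_e S(G,G_e)^c$ from \fullref{graphofvarsigmagroups} to a single complement $S(G,H)^c$, and identify $S(G,H)$ with the sphere of the topological fundamental group $\pi_1(\Gamma)\cong\F_b$ to get dimension $b-1$. The only cosmetic difference is that you spell out the commensurability-implies-equal-annihilator step (for which torsion-freeness of $\R$ suffices; divisibility is not actually needed) and the rationality claim, both of which the paper leaves implicit.
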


In particular, a rank $n$ \emph{generalized Baumslag-Solitar group}, or $\gbs_n$ group, is the
fundamental group $G$ of a finite graph of groups $\Gamma$ all of whose edge and vertex groups are isomorphic to 
$\Z^n$. When $n=1$ we simply say that $G$ is  a $\gbs$ group.

If $\Gamma$ is   an ascending HNN-extension, $G$ is solvable, so we obtain:

\begin{corollary} If $G$ is a non-solvable $\gbs_n$ group, then $\Sigma(G)= S(G,H)^c $ with $H$ any edge or vertex group. \qed
\end{corollary}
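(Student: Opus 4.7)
The plan is to reduce this statement to the preceding corollary. I would start with any $\gbs_n$ presentation of $G$, that is, a finite graph of groups $\Gamma$ with every edge and vertex group isomorphic to $\Z^n$, and first reduce $\Gamma$. The reduction process preserves the $\gbs_n$ property: each collapse identifies an edge group $G_e$ with an adjacent vertex group via an isomorphism of $\Z^n$, so the resulting graph of groups still has all edge and vertex groups isomorphic to $\Z^n$.

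After reduction, I would verify the hypotheses of the preceding corollary. The abelian group $\Z^n$ lies in $\varsigma$. Moreover, every edge inclusion $\Z^n\into\Z^n$ has finite-index image, so adjacent edge and vertex groups of $\Gamma$ are commensurable subgroups of $G$ and, by transitivity along paths in the underlying graph, so are all edge and vertex groups. The one hypothesis requiring more than a routine check is that $\Gamma$ is not an ascending HNN-extension.

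This is where the non-solvability hypothesis enters. If the reduced $\Gamma$ were an ascending HNN-extension, it would present $G$ as $\langle \Z^n, t\mid t^{-1}at=\sigma(a)\text{ for }a\in\Z^n\rangle$ for some injection $\sigma:\Z^n\into\Z^n$. The kernel $N$ of the homomorphism $G\to\Z$ sending $t\mapsto1$ and $\Z^n\mapsto0$ would then equal $\bigcup_{k\ge 0}t^k\Z^n t^{-k}$, an ascending union of abelian subgroups of $G$, hence itself abelian. Thus $G$ would be metabelian and in particular solvable, contrary to hypothesis. With this obstruction ruled out, the preceding corollary applies and directly yields $\Sigma(G)=S(G,H)^c$ for any edge or vertex group $H$. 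The only substantive step in the argument is this ascending-HNN analysis; everything else is a routine check.
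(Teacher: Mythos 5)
Your proof is correct and takes essentially the same route as the paper: reduce $\Gamma$, observe that $\Z^n\in\varsigma$ and edge/vertex groups are commensurable, and use non-solvability to rule out the ascending HNN case so that the preceding corollary applies. The paper states the implication ``$\Gamma$ ascending HNN $\Rightarrow$ $G$ solvable'' without proof, whereas you supply the (correct) argument that the kernel of $G\to\Z$ is the nested union $\bigcup_{k\ge0}t^k\Z^n t^{-k}$, hence abelian, making $G$ metabelian.
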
 

In certain cases, we may relate $\Sigma(G)$ to the center $Z(G)$.

 \begin{proposition}\label{prop:gbs1center}
Let $G$ be a non-solvable $\gbs$ group. Then:
\[\Sigma(G)=S(G,\gpcenter(G))^c.\]
 If the center is trivial, $\Sigma(G)=\emptyset$. If not, $\Sigma(G)$
 is the complement of a codimension 1 rationally defined subsphere.
 \end{proposition}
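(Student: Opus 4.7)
My plan is to reduce to the preceding corollary and then identify $S(G,H)$ with $S(G,\gpcenter(G))$ for $H$ any vertex or edge group. The previous corollary gives $\Sigma(G)=S(G,H)^c$ directly. To identify the two subspheres, I would first prove the inclusion $\gpcenter(G)\subset H$. Since $G$ is non-solvable, its action on the Bass-Serre tree $T$ is non-elementary and minimal, so a central element of $G$ has a $G$-invariant fixed-point set in $T$, which must be empty or all of $T$; the empty case forces the element to be hyperbolic with $G$-invariant axis, contradicting non-elementarity. Hence $\gpcenter(G)$ lies in every vertex stabilizer, giving $S(G,H)\subset S(G,\gpcenter(G))$.

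For the reverse inclusion, I would split into cases. When $\gpcenter(G)\ne 0$, it is a finite-index subgroup of $H\cong\Z$, so any $\varphi\colon G\to\R$ vanishing on $\gpcenter(G)$ also vanishes on $H$ since $\R$ is torsion-free. To upgrade this to the codimension-$1$ assertion, I would verify that the image of $\gpcenter(G)$ in $G^{ab}\otimes\Q$ is non-zero: in the unimodular regime one has consistent relations $p_e a_u=q_e a_v$ along edges, so the vertex generators survive non-trivially in $G^{ab}\otimes\Q$ and so does any non-trivial power of them appearing in $\gpcenter(G)$.

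When $\gpcenter(G)=0$ I would show directly that every $\varphi\colon G\to\R$ kills $H$, so $S(G,H)=S(G)$ and $\Sigma(G)=\emptyset$. Fixing a generator $a_v$ of each vertex group $G_v$, each edge $e$ from $u$ to $v$ with incidence labels $p_e,q_e$ yields a defining relation in $G$ making $a_u^{p_e}$ and $a_v^{q_e}$ conjugate, hence $p_e\varphi(a_u)=q_e\varphi(a_v)$. Following these ratios around a closed loop in the underlying graph gives $\varphi(a_v)=\varphi(a_v)\prod(p_e/q_e)^{\varepsilon_e}$. If every such loop product equalled $1$ (the unimodular case), the same calculation performed with integers would produce consistent exponents $m_v$ such that $a_v^{m_v}$ is a non-trivial element central in $G$, contradicting $\gpcenter(G)=0$; so some loop product differs from $1$, forcing $\varphi(a_v)=0$ for every $v$.

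The main obstacle I anticipate is cleanly establishing the equivalence between $\gpcenter(G)=0$ and non-unimodularity of the graph of groups for non-solvable $\gbs$ groups. This is standard in $\gbs$ theory via the modular homomorphism $\Delta\colon G\to\Q^*_+$, but requires some bookkeeping with edge labels over loops of $\Gamma$; citing a reference such as \cite{Lev13gbsrank} should make it brief.
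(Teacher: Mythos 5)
Your proposal is correct and follows the same route as the paper: reduce to the preceding corollary to get $\Sigma(G)=S(G,H)^c$ for $H$ an edge group, then identify $S(G,H)$ with $S(G,\gpcenter(G))$ by comparing the center to the edge groups. The paper simply cites \cite{Lev07} (Propositions 2.5 and 3.3) for the facts you rederive directly — that the center lies in the kernel of the action on $T$, that it is finite index in $H$ and survives in $G^{\mathrm{ab}}\otimes\Q$ when nontrivial, and that edge groups are torsion in $G^{\mathrm{ab}}$ when the center is trivial.
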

 
The solvable $\gbs$ groups are 
   $\Z$ and the Baumslag-Solitar groups $BS(1,k)=\langle a,t\mid t^{-1}at=a^k\rangle$.
 
 \begin{proof} This follows from 
   \cite[Propositions 2.5 and 3.3]{Lev07}. 
If the center is trivial,
   edge groups have finite image in
   the abelianization of $G$ so $\Sigma(G)=\emptyset$. 
If there is a center, it is infinite cyclic, contained in every edge group, and maps injectively into the abelianization.
 \end{proof}

Going back to arbitrary $n$, suppose that $G=\Z^n\times \F_k$ for some $k\ge2$. Then all edge and vertex groups are equal to the $\Z^n$ factor, which is the center, and $\Sigma(G)= S(G,Z(G))^c $. This may be generalized to $\gbs_n$ groups which are virtually $\Z^n\times \F_k$ (such groups are called unimodular, see below).

 \begin{proposition}\label{prop:nonsolvableunimodularcenter}
If the   $\gbs_n$ group $G$ is non-solvable and
unimodular   (i.e.\  $G$ is  virtually $\Z^n\times \F_k$ with
   $k\ge2$), 
then $\Sigma(G)=S(G,\gpcenter(G))^c$.
 \end{proposition}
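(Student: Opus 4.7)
The plan is to use the preceding corollary, which gives $\Sigma(G)=S(G,H)^c$ for any edge or vertex group $H$ (valid because $G$ is non-solvable, so the $\gbs_n$ structure is not an ascending HNN extension), and then to prove $S(G,H)=S(G,Z(G))$ by showing that $Z(G)$ is commensurable with $H\cong\Z^n$.

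First I would show $Z(G)\subseteq H$ for every vertex group $H$. Since $G$ is non-solvable, its action on the Bass-Serre tree $T$ is minimal and irreducible, and any normal abelian subgroup of such a group must act by elliptic isometries (a hyperbolic axis would be $G$-invariant, contradicting irreducibility). The common fixed-point set of $Z(G)$ is $G$-invariant and non-empty, hence is all of $T$ by minimality. Thus $Z(G)$ lies in every vertex stabilizer, giving the easy inclusion $S(G,H)\subseteq S(G,Z(G))$.

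For the reverse inclusion I would show $[H:Z(G)]<\infty$. The unimodular hypothesis supplies a finite-index normal subgroup $G_0\normal G$ isomorphic to $\Z^n\times\F_k$, whose center is $Z(G_0)=\Z^n$ since $k\geq 2$. Because $Z(G_0)$ is characteristic in $G_0$, it is normal abelian in $G$, so the elliptic argument of the previous paragraph also applies to it and yields $Z(G_0)\subseteq H$; as both subgroups have rank $n$, this inclusion has finite index. It remains to promote $Z(G_0)$ into $Z(G)$: conjugation preserves $Z(G_0)$ and factors through the finite quotient $G/G_0$, giving a homomorphism to $\GL_n(\Z)$ that one identifies with the modular invariant of the $\gbs_n$ structure of $G$. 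Invoking the standard characterization---$G$ is virtually $\Z^n\times\F_k$ if and only if this modular homomorphism is trivial---the $G/G_0$-action on $Z(G_0)$ is trivial, so $Z(G_0)\subseteq Z(G)$. Therefore $Z(G)$ has finite index in $H\cong\Z^n$, and since $\R$ is torsion-free $\varphi(Z(G))=0$ if and only if $\varphi(H)=0$ for every $\varphi\from G\to\R$, yielding $S(G,Z(G))\subseteq S(G,H)$ and completing the proof. The main obstacle is justifying the triviality of the conjugation action of $G/G_0$ on $Z(G_0)$ from the unimodular hypothesis; the other steps are essentially formal consequences of the tree action and the preceding corollary.
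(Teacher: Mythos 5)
Your first inclusion, $S(G,H)\subseteq S(G,\gpcenter(G))$ via the elliptic/minimality argument, is sound and matches the first half of the paper's proof. The problem is step 5, where you try to promote $\gpcenter(G_0)$ into $\gpcenter(G)$. The characterization you invoke is incorrect: $G$ is virtually $\Z^n\times\F_k$ if and only if the modular image $\Delta(G)\subset\GL_n(\Q)$ is \emph{finite}, not trivial (the paper states this explicitly right after \fullref{prop:nonsolvableunimodularcenter}). Consequently the conjugation action of $G/G_0$ on $\gpcenter(G_0)\cong\Z^n$ can very well be non-trivial, and $\gpcenter(G_0)\not\subseteq\gpcenter(G)$ in general. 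A concrete counterexample: let $\F_2=\langle s_1,s_2\rangle$ act on $\Z^2$ with $s_1$ acting trivially and $s_2$ swapping the coordinates, and set $G=\Z^2\rtimes\F_2$; this is a non-solvable unimodular $\gbs_2$ group (virtually $\Z^2\times\F_3$, with $G_0$ the preimage of the index-2 kernel of $\F_2\to\Z/2$), but $\gpcenter(G)$ is the diagonal $\Z\subsetneq\Z^2\cong\gpcenter(G_0)$, so $\gpcenter(G)$ is \emph{not} commensurable with the vertex group $H\cong\Z^2$. Your plan of proving $S(G,H)=S(G,\gpcenter(G))$ by commensurability therefore cannot work.

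The proposition nevertheless holds because the two spheres still coincide even though the subgroups are not commensurable: the relation $s_2as_2^{-1}=b$ forces any $\varphi\from G\to\R$ to satisfy $\varphi(a)=\varphi(b)$, so $\varphi$ kills $H=\langle a,b\rangle$ iff it kills $\gpcenter(G)=\langle ab\rangle$. The paper proves the inclusion $\Sigma(G)\subseteq S(G,\gpcenter(G))^c$ by exactly this sort of averaging: if $[\varphi]\in\Sigma(G)$ then $\varphi$ is non-zero on some $x\in\gpcenter(G_0)$; since $\gpcenter(G_0)$ centralizes the finite-index subgroup $G_0$, the element $x$ has only finitely many $G$-conjugates, all lying in the abelian normal subgroup $N$ of elements acting trivially on $T$; the product of these conjugates is central in $G$ and $\varphi$ of it equals $[G:C_G(x)]\cdot\varphi(x)\neq 0$. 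That ``product of conjugates'' step is the idea missing from your argument, and it replaces the false commensurability claim.
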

 \begin{proof}  Consider the action of $G$ on the Bass-Serre tree $T$ of $\Gamma$ (we may assume that this action is minimal: there is no proper invariant subtree). 
 By a standard argument, the center of $G$  acts as  the identity on   $T $ (it is elliptic because 
 $T$ is not a line, and its fixed subtree is $G$-invariant), so a $\varphi$ that does not vanish on the center is in $\Sigma(G)$. 
We have proved $S(G,\gpcenter(G))^c\subset \Sigma(G)$ (this also follows from Proposition A2.4 of \cite{Str12}).

 Let $G_0=\Z^n\times \F_k$ have finite index in $G$. It acts minimally on $T$, 
 and as above its center $Z(G_0)=\Z^n$ acts as the identity on $T$.
 We deduce that the set of elements of $G$ acting as the identity on  $T $ is a normal subgroup $N$ isomorphic to $\Z^n$; it has finite index in every edge stabilizer, and contains $Z(G_0)$ with finite index.

If   $[\varphi]\in\Sigma(G)$, then $\varphi$ does not vanish on edge stabilizers, so there is an $x\in  Z(G_0)$  with $\varphi(x)\ne0$. This $x$ has finitely many conjugates in $G$, all contained in the abelian group $N$, and the product of these conjugates is a central element of $G$ which is not killed by $\varphi$. This proves $\Sigma(G)\subset S(G,\gpcenter(G))^c$.
 \end{proof}

We may interpret these results in terms of the \emph{modular representation $\Delta$} of $G$. 
Let $G$ be a non-solvable $\gbs_n$ group, and let $G_e\cong\Z^n$ be an edge
group. 
If $g\in G$, conjugation by $g$ induces an isomorphism $$g^{-1}G_eg\cap G_e\to G_e\cap gG_eg^{-1}$$
between finite index subgroups of $G_e$. We define $\Delta(g)$ as the
class of this isomorphism in the abstract commensurator  of $G_e$ (isomorphic 
  to $\GL_n(\Q)$).
  The groups of \fullref{prop:nonsolvableunimodularcenter}
are those for which the image  $\Delta(G)$ is a finite subgroup of $\GL_n(\Q)$.

Since $G_e\cong\Z^n$, we may
  view $\Delta$ as an action of $G$ on the vector space  $G_e\otimes \Q$.
The codimension of $S(G,G_e)$ is the
dimension of the space of invariant vectors for the action on the dual
space. The rank of the center of $G$  is the dimension of the space of
invariant vectors for the action on the   space itself. Unfortunately,
these dimensions may be different when $n>1$, so the analogue of
\fullref{prop:gbs1center} can not be true in general.

\section{Hierarchies}\label{sec:hierarchies}

The goal of this section is to study $\Sigma(G)$, and $\ker \varphi$ for $\varphi:G\to\R$, when $G$ admits a suitable hierarchical decomposition.

\begin{definition}
  A $\mathcal{P}$--\emph{hierarchy} for a group $G$ is an iterated
  splitting over subgroups in the class $\mathcal{P}$. 
  \end{definition}

    More precisely,  a hierarchy may be viewed as a finite rooted tree. Each vertex $\alpha$ carries a group $G(\alpha)$, with the root carrying $G$. If $\alpha$ is not a leaf, it carries 
  a non-trivial    
splitting of  $G(\alpha)$ as an amalgamated product or
  HNN extension over a group in the class $\mathcal{P}$, called an edge group of the hierarchy. Moreover, $\alpha$ has one descendant $\beta$
for each factor group in the splitting (so $\alpha$ has one or two descendants), and $G(\beta)$ is the corresponding factor. 

The groups carried by the leaves are called the leaf groups of the
hierarchy. 
The number of splittings in the hierarchy is equal to the number of non-leaf vertices.

A  $\mathcal{P}$--hierarchy is a \emph{good $\mathcal{P}$--hierarchy}
if all groups $G(\alpha)$ are finitely generated, and
no splitting in the hierarchy is an ascending HNN-extension.
Recall that $\varsigma$ is the class of finitely generated groups $H$ with
  $\Sigma(H)=S(H)$.

\begin{theorem}\label{thm:varsigmahierarchy}
  Suppose $G$ admits a good $\varsigma$--hierarchy,   
   with splittings 
   over   subgroups $\{A_i\mid
i\in\mathcal{I}\}$, and   leaf groups 
$\{H_j\mid
j\in\mathcal{J}\}$.

Then:
\[\Sigma(G)=\bigcap_{i\in\mathcal{I}} S(G,A_i)^c \cap
\bigcap_{j\in\mathcal{J}}(\iota_{H_j}^*)^{-1}(\Sigma(H_j)).\]
\end{theorem}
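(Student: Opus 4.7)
The plan is to prove this by induction on the number of splittings in the hierarchy, i.e.\ on the number of non-leaf vertices of the underlying tree. The base case is the trivial hierarchy consisting of a single leaf, where $G=H_1$ and the statement reduces to the tautology $\Sigma(G)=\Sigma(H_1)$, with both index sets $\mathcal{I}$ and $\mathcal{J}\setminus\{1\}$ empty.

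For the inductive step, consider the splitting at the root: either $G=G_1*_AG_2$ or $G=G_1*_A$ (an HNN-extension), with $A\in\varsigma$ one of the $A_i$, say $A=A_{i_0}$. Since the hierarchy is good, this splitting is not an ascending HNN-extension, and both $G_1$ and (in the amalgam case) $G_2$ are finitely generated, each inheriting a good $\varsigma$--hierarchy whose edge groups and leaf groups partition the remaining $A_i$'s and the $H_j$'s respectively. I will show both containments separately.

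For the forward inclusion, suppose $[\varphi]\in\Sigma(G)$. \fullref{amalgnotinSigma} applied to the one-edge reduced splitting at the root gives $\varphi(A)\ne0$. Since $A\in\varsigma$, this is equivalent to $[\varphi|_A]\in\Sigma(A)=S(A)$, so the hypotheses of \fullref{inSigmavert} are satisfied and $[\varphi|_{G_k}]\in\Sigma(G_k)$ for each factor $G_k$. Apply the induction hypothesis to each sub-hierarchy on $G_k$ to conclude that $\varphi$ is non-trivial on every remaining $A_i$ and $[\varphi|_{H_j}]\in\Sigma(H_j)$ for every $H_j$ lying in that sub-hierarchy. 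For the backward inclusion, suppose $\varphi$ is non-trivial on every $A_i$ and $[\varphi|_{H_j}]\in\Sigma(H_j)$ for every $j$. By the induction hypothesis applied to each sub-hierarchy, $[\varphi|_{G_k}]\in\Sigma(G_k)$. Combined with $\varphi(A)\ne0$, \fullref{amalginSigma} now yields $[\varphi]\in\Sigma(G)$.

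The main obstacle, such as it is, lies in verifying that the goodness condition (finite generation of all $G(\alpha)$ and no ascending HNN-extension anywhere) and the class $\varsigma$ are genuinely inherited by sub-hierarchies, so that \fullref{amalgnotinSigma} and \fullref{inSigmavert} may be invoked at every level. Finite generation is built into the definition of good hierarchy, the restriction on ascending HNN-extensions is hereditary by definition, and the hypothesis $A_i\in\varsigma$ is a property of the edge groups themselves, independent of the ambient level. Once these bookkeeping points are settled, the induction is routine.
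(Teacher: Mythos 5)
Your proof is correct and follows essentially the same route as the paper's: induction on the number of splittings, with the inductive step reducing to the one-edge splitting at the root. The only cosmetic difference is that you unpack the ingredients of \fullref{varsigmaamalgam} (namely \fullref{amalgnotinSigma}, \fullref{inSigmavert}, and \fullref{amalginSigma}) separately, whereas the paper simply cites that corollary, which already packages the identity $\Sigma(G)=S(G,A)^c\cap\bigcap_{k}(\iota^*_{G_k})^{-1}(\Sigma(G_k))$ for a good one-edge $\varsigma$-splitting.
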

\begin{proof}
  The proof is by induction on the number of splittings in the
  hierarchy.
If there are no splittings then $G$ itself is a leaf   group and the result is clear.
Otherwise, consider the first splitting in the hierarchy, which is of
the form $G=G_1*_AG_2$ or $G=G_1*_A$.
Since $A\in\varsigma$ and the splitting is not an ascending HNN-extension, by  \fullref{varsigmaamalgam},   
\[\Sigma(G)=S(G,A)^c\cap
\bigcap_{k\in\mathcal{K}}(\iota^*_{G_k})^{-1}(\Sigma(G_k)),\]
where $\mathcal{K}=\{1,2\}$ in the amalgamated product case and
$\mathcal{K}=\{1\}$ in the HNN case.
For each $k$,   consider the   vertex carrying $G_k$ in the hierarchy for $G$. Its descendant subtree 
is a hierarchy for $G_k$ satisfying the hypotheses of the
theorem and having strictly fewer splittings.
The theorem then follows by the induction hypothesis.
\end{proof}

\begin{corollary} \label {pasnul}
 If $G$ admits a good $\varsigma$--hierarchy with leaf groups in $\varsigma$,  then   $\Sigma(G)=\bigcap_{i\in\mathcal{I}} S(G,A_i)^c$ is
 the complement of a finite number of rationally defined subspheres.
 
In particular, $\Sigma(G)=-\Sigma(G)$, so $G$ does not admit a
decomposition as a strictly ascending HNN extension with finitely generated base group (see \fullref{sasc}).  \qed
\end{corollary}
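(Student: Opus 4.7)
The plan is to derive this quickly from \fullref{thm:varsigmahierarchy} together with the definition of $\varsigma$. Since every leaf group $H_j$ lies in $\varsigma$, we have $\Sigma(H_j)=S(H_j)$, and the preimage $(\iota_{H_j}^*)^{-1}(\Sigma(H_j))$ is then simply the whole domain of $\iota_{H_j}^*$, namely $S(G,H_j)^c$. Substituting into \fullref{thm:varsigmahierarchy} yields
\[
\Sigma(G)=\bigcap_{i\in\mathcal{I}} S(G,A_i)^c \,\cap\, \bigcap_{j\in\mathcal{J}} S(G,H_j)^c.
\]

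Next I would show that the second intersection is absorbed by the first. If $\mathcal{I}=\emptyset$ then $G$ itself is the unique leaf group and both intersections equal $S(G)$. Otherwise, every leaf vertex in the hierarchy has a parent carrying a splitting with some edge group $A_i$, and $H_j$ is one of the vertex groups of that splitting; since in any amalgam or HNN splitting the edge group embeds in each vertex group, one has $A_i\subseteq H_j$ and hence $S(G,A_i)^c\subseteq S(G,H_j)^c$. The leaf-group constraints are therefore redundant, giving $\Sigma(G)=\bigcap_{i\in\mathcal{I}} S(G,A_i)^c$.

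It remains to observe that each $S(G,A_i)$ is rationally defined: since $G$ is finitely generated, $G^{\mathrm{ab}}$ is finitely generated abelian, so the condition $\varphi(A_i)=0$ imposes $\Z$--linear constraints on $\Hom(G,\R)$ and cuts out a rational subsphere. The equality $\Sigma(G)=-\Sigma(G)$ is then immediate because each $S(G,A_i)^c$ is invariant under $\varphi\mapsto-\varphi$, and the ``in particular'' clause follows by combining this symmetry with \fullref{sasc}, which says that a strictly ascending HNN-extension with finitely generated base group has $\Sigma(G)\ne-\Sigma(G)$ (equivalently, by quoting \fullref{asc}). There is no serious obstacle here beyond the small bookkeeping that every edge group of a splitting embeds in both adjacent vertex groups of that splitting.
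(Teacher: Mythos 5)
Your proposal is correct and is essentially the paper's proof: the paper marks the corollary \qed precisely because it follows from \fullref{thm:varsigmahierarchy} by observing that $(\iota_{H_j}^*)^{-1}(\Sigma(H_j))=S(G,H_j)^c$ when $H_j\in\varsigma$, and that these terms are absorbed by the edge-group terms since each leaf group contains the edge group of its parent's splitting. You have simply spelled out the bookkeeping that the paper leaves implicit.
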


 \begin{theorem}\label{thm:Zhierarchy}
   Suppose $G$ admits a good $\Z$--hierarchy   with 
leaf groups    $\{H_j\mid j\in\mathcal{J}\}$ belonging to 
   $\varsigma$. Then:
\begin{enumerate}
  \item $\Sigma(G)^c$ is a finite union of rationally defined subspheres.\label{item:complementofsubspheres}
  \item If $[\varphi]\in\Sigma(G)$, then $\ker \varphi$ is
  a free product of groups,   each isomorphic to   some  $\ker\varphi_{|H_j}$ with
  $\varphi(H_j)\ne0$.
If $[\varphi]$ is discrete this is a finite free product.\label{item:fgkernel}
\item If $[\varphi]\in \Sigma(G)^c$ is discrete, and  every group in the hierarchy has first Betti number at least 2,
 then $\ker \varphi$ surjects onto
  $\F_\infty$ (an infinitely generated free group).\label{item:surjectiontofinfinity}
  \end{enumerate}
 \end{theorem}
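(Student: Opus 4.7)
For part~(1), I would apply Corollary~\ref{pasnul} directly: because $\Z\in\varsigma$, a good $\Z$-hierarchy with leaf groups in $\varsigma$ is a good $\varsigma$-hierarchy in the sense of that corollary, so $\Sigma(G)=\bigcap_i S(G,A_i)^c$ is the complement of a finite union of rationally defined subspheres.

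For part~(2), my plan is induction on the number of splittings in the hierarchy. The base case, $G$ a leaf, is trivial. For the inductive step, let the first splitting be $G=G_1*_A G_2$ or $G=G_1*_A$ with $A\cong\Z$. Since the splitting is not ascending and $[\varphi]\in\Sigma(G)$, Proposition~\ref{amalgnotinSigma} forces $\varphi(A)\neq 0$; because $A\cong\Z$ this gives $\ker\varphi\cap gAg^{-1}=\{1\}$ for every $g\in G$. Hence $\ker\varphi$ acts on the Bass-Serre tree $T$ of the splitting with trivial edge stabilizers, and by Bass-Serre theory $\ker\varphi$ decomposes as a free product of its vertex stabilizers (each a conjugate of $\ker(\varphi|_{G_i})$) together with the free group $F_{b_1(\ker\varphi\backslash T)}$ coming from the quotient graph. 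Proposition~\ref{inSigmavert}, whose hypothesis $[\varphi|_A]\in\Sigma(A)$ reduces to $\varphi(A)\neq 0$ since $\Z\in\varsigma$, yields $[\varphi|_{G_i}]\in\Sigma(G_i)$, so the induction hypothesis applies to each $\ker(\varphi|_{G_i})$. When $[\varphi]$ is discrete, $G/\ker\varphi=\Z$ acts on $\ker\varphi\backslash T$ with finite quotient and finite-index stabilizers, making the quotient graph finite and the resulting free product finite.

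For part~(3), I would regard $G$ as the fundamental group of the graph of groups $\Gamma_H$ underlying the whole hierarchy, with vertex groups $\{H_j\}$ and edge groups $\{A_i\}$, and apply Theorem~\ref{thm:varsigmahierarchy}: because the leaves lie in $\varsigma$, the condition $[\varphi]\in\Sigma(G)^c$ is equivalent to $\varphi$ vanishing on some $A_i$ or some $H_j$, and since each $A_i$ embeds into its incident vertex groups, the latter forces the former, so some $A_i$ with $\varphi(A_i)=0$ always exists. Let $T$ be the Bass-Serre tree of $\Gamma_H$ and $\Lambda=\ker\varphi\backslash T$. Because $\varphi$ is discrete, $G/\ker\varphi=\Z$ acts on $\Lambda$ with quotient $\Gamma_H$, and edge and vertex orbits are indexed by $\Z/\varphi(A_i)\Z$ and $\Z/\varphi(H_j)\Z$ respectively. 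If every $\varphi(H_j)\neq 0$, then $\Lambda$ has finitely many vertices but infinitely many edges, so $b_1(\Lambda)=\infty$, and the standard Bass-Serre surjection from $\ker\varphi$ onto $\pi_1(\Lambda)=F_\infty$ (obtained by killing all vertex stabilizers) delivers the conclusion. If some $\varphi(H_j)=0$, then the action of $\ker\varphi$ on $T$ has infinitely many vertex orbits whose stabilizer is a conjugate of $H_j$; killing instead the edge groups in the graph-of-groups presentation of $\ker\varphi$ yields a surjection onto the free product of the vertex-group quotients with $F_{b_1(\Lambda)}$, and the $b_1\ge 2$ hypothesis at every level of the hierarchy should guarantee that each $H_j$-quotient still maps nontrivially onto $\Z$, producing a further surjection onto $F_\infty$. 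The main obstacle is precisely this second case: killing the possibly several incident edge-group images in $H_j$ may drop $b_1(H_j)$ substantially, so verifying the existence of a residual $\Z$-quotient genuinely uses the $b_1\ge 2$ hypothesis at every level of the hierarchy, not only at leaves. I expect the cleanest organization is a parallel induction on the hierarchy, descending to the deepest $G(\alpha)$ at which $\varphi|_{G(\alpha)}\notin\Sigma(G(\alpha))$ and combining the single-splitting case analysis there with the free-product decomposition from part~(2) at higher levels.
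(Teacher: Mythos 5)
Parts (1) and (2) of your proposal are correct and essentially reproduce the paper's argument: (1) is indeed an application of \fullref{pasnul}, and your inductive argument for (2) is the paper's -- act on the Bass--Serre tree of the first splitting, note that $K=\ker\varphi$ has trivial edge stabilizers because $A\cong\Z$ and $\varphi(A)\neq 0$ (forced by \fullref{amalgnotinSigma}), invoke \fullref{inSigmavert} (or \fullref{varsigmaamalgam}) to push the $\Sigma$-condition down to the $G_i$'s, and count orbits via \fullref{lem:orbitcounting} for the finiteness claim. You are in fact a bit more careful than the paper in explicitly recording the free factor $F_{b_1(K\backslash T)}$ from the quotient graph in the Kurosh decomposition.

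For part (3) you have accurately pinpointed the difficult case -- a level at which $\varphi$ vanishes on an entire vertex group -- but your proposal does not close it. The first plan (kill all edge groups in the full leaf-level graph of groups $\Gamma_H$) fails, as you observe, because a single leaf vertex may carry several incident edge groups whose images jointly have finite index, so killing them kills the vertex quotient. The fallback (``descend to the deepest $G(\alpha)$ with $\varphi|_{G(\alpha)}\notin\Sigma(G(\alpha))$'') does not rescue it either: once you reach a level whose first splitting $G(\alpha)=G_1*_A$ or $G_1*_AG_2$ has $\varphi(A)=0$ \emph{and} $\varphi(G_1)=0$, the restriction $\varphi|_{G_1}$ is the zero map, so there is no deeper $\Sigma$-data to descend into, and the surjection onto $\F_\infty$ must be built at this level. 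That construction is precisely the missing ingredient. The paper does it by analysing the quotient $T/K$ of the Bass--Serre tree of this one splitting: in the HNN case with $\varphi(G_1)=0$, $T/K$ is a bi-infinite line, and the trick is to kill only every \emph{other} edge group $G_{e(2i)}$, obtaining an infinite free product of groups $H=B*_{\langle c\rangle}D$, each of which surjects onto $\Z$ (using $b_1(G_1)\geq2$ applied to the quotient of $G_1$ by a single incident edge group); in the amalgam case with $\varphi(G_1)=0$, $T/K$ is an infinite star, one kills the central vertex group, and each terminal $G_1$-quotient maps onto $\Z$ for the same reason (here there is only one incident edge). Without supplying some such construction, the proposal for (3) is incomplete.
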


  The first assertion follows from
\fullref{thm:varsigmahierarchy}.
Before proving the others, 
we need a simple  orbit counting lemma:
\begin{lemma}\label{lem:orbitcounting}
 Let $G$ act transitively on a set $X$. 
Let $K$ be a normal subgroup, with $p\from G\to G/K$ the quotient map. 
Let $x\in X$ be arbitrary, with stabilizer $G_x$. 
The number of $K$--orbits in $X$ equals the index of $KG_x$ in $G$, and also the index of $p(G_x)$ in $G/K$. 
\end{lemma}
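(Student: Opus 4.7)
The plan is to reduce everything to a double-coset computation and then apply the third isomorphism theorem.

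First I would invoke transitivity to identify $X$ with the coset space $G/G_x$ as $G$--sets, via the usual orbit-stabilizer bijection $gG_x \mapsto g\cdot x$. Under this identification, a $K$--orbit on $X$ becomes a set of the form $K\cdot(gG_x) = \{kgG_x : k\in K\}$, so the $K$--orbits correspond to the double cosets $K\backslash G/G_x$. The crux is then to show that, because $K$ is normal, these double cosets collapse to ordinary cosets of $KG_x$ in $G$.

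Concretely, I would check that $g_1G_x$ and $g_2G_x$ lie in the same $K$--orbit if and only if $g_1^{-1}g_2 \in KG_x$. One direction is immediate: if $g_2 = kg_1h$ with $k\in K$ and $h\in G_x$, then $g_2G_x = kg_1G_x$. For the other direction, if $kg_1G_x = g_2G_x$ for some $k\in K$, then $g_1^{-1}k^{-1}g_2\in G_x$, so $g_1^{-1}g_2 \in g_1^{-1}Kg_1\cdot G_x = K\cdot G_x$, using normality of $K$ in the last step. Note that $KG_x$ is itself a subgroup of $G$ since $K\trianglelefteq G$. Thus the $K$--orbits on $X$ are in bijection with $G/(KG_x)$, which gives the first equality: the number of $K$--orbits equals $[G:KG_x]$.

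For the second equality, I would observe that $p(G_x) = G_xK/K = KG_x/K$, and then apply the third isomorphism theorem to conclude
\[[G/K : p(G_x)] = [G/K : KG_x/K] = [G : KG_x].\]

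There is no real obstacle here; the only place requiring care is invoking normality of $K$ to pass from a double-coset description of $K$--orbits to the single-coset description in terms of $KG_x$.
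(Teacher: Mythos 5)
Your proof is correct and rests on the same key fact as the paper's proof, namely that normality of $K$ makes $KG_x$ a subgroup and that $K$-orbits are counted by cosets of $KG_x$. The paper phrases this as the orbit-stabilizer theorem applied to the induced transitive $G$-action on the set of $K$-orbits (with stabilizer $KG_x$), while you phrase it as a double-coset computation after identifying $X\cong G/G_x$; these are the same argument in slightly different language.
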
 
 \begin{proof}
$G$ acts transitively on the set of
   $K$--orbits, with the stabilizer of $Kx$ equal to $KG_x$ (which is
   a subgroup because $K$ is normal). 
The map $p$ induces a bijection between   cosets mod $KG_x$ in $G$ and cosets mod $p(G_x)$ in $G/K$.
 \end{proof}

 \begin{proof}[Proof of \fullref{thm:Zhierarchy}]
   Assertions (2) and (3) 
are   proved by induction on the number of splittings in the
hierarchy. 
If there are no splittings then $\Sigma(G)=S(G)$ and the result is clear.
Otherwise the first splitting in the hierarchy for $G$ corresponds to
either an amalgamated product $G=G_1*_A G_2$ 
or an HNN-extension $G=G_1*_A$. The $G_i$'s  have shorter
hierarchies satisfying the hypothesis of the theorem. 

  Let $K=\ker\varphi$, so $G/K\simeq \varphi(G)$ is a subgroup of $\R$.
We study $K$ through its action on the Bass-Serre tree $T$ of the first  splitting in the hierarchy. Since $K$ is normal, vertex stabilizers are conjugate to $K\cap G_i=\ker
\varphi_{|G_i}$ for $i=1$ or 2; edge stabilizers are conjugate to $K\cap A=\ker
\varphi _{|A}$.

Consider the image in $G/K$ of the stabilizer $G_v$ of a point $v\in
T$ for the action of $G$.
It is trivial if $G_v<K$ (i.e.\    if $\varphi(G_v)=0$) and infinite otherwise.
Applying \fullref{lem:orbitcounting} with $X=Gv$, this implies that
$Gv$ splits into infinitely many $K$--orbits if $\varphi(G_v)=0$.
 The converse is also true if $[\varphi]$ is discrete:    if $\varphi(G_v)\ne0$, the number of $K$-orbits in $Gv$ is the index of $\varphi(G_v)$ in $\varphi(G)$, which is finite since $\varphi(G)\simeq\Z$.
In particular, $T/K$ has infinitely many edges if $\varphi(A)=0$, finitely many if $[\varphi]$ is discrete and $\varphi(A)\ne0$.

First suppose  $\varphi(A)\neq 0$, so $\varphi(G_i)\ne0$. In this case edge stabilizers for the $K$--action are
trivial ($\varphi$ is injective on $A$ since $A\cong\Z$).
Thus, $K$ is a free product of groups isomorphic to $\ker
\varphi_{|G_i}$ (and both factors occur in the case of an amalgam).
Moreover, if $[\varphi]$ is discrete then $T/K$ is finite, so this is
a finite free product.

If $[\varphi]\in \Sigma(G)$, its restrictions to $G_i$ belong to $\Sigma(G_i)$ by 
\fullref{varsigmaamalgam}, and the 
 theorem follows by induction.
  If $[\varphi]\in \Sigma(G)^c$, we have $[\varphi_{ | G_i}]\in \Sigma(G_i)^c$ for some $i$. If $[\varphi]$ is discrete and Betti numbers are $\ge2$,  $\ker
\varphi_{|G_i}$   maps onto $\F_\infty$ by induction, and so does $\ker\varphi$.

Now suppose $\varphi(A)=0$. Then $[\varphi]\in \Sigma(G)^c$ by 
\fullref{varsigmaamalgam}, so the second assertion of the theorem is proved. 

To complete the proof, we construct a surjection from $K$ to $\F_\infty$, assuming 
  that $[\varphi]$ is discrete, $\varphi(A)=0$, and Betti numbers are $\ge2$.
  
As explained above, 
$T/K$  has infinitely many edges. If $v$ is a vertex of $ T$ with $\varphi(G_v)=0$, the valence of $v$ in $T/G$ is the same as in $T/K$ because $G_v$ is conjugate to   some $G_i$, and  the index of $K\cap A$ in $K\cap G_i$ equals the index of $A$ in $G_i$   since $G_i\subset K$.

We distinguish between the amalgamated product and HNN cases.

In the amalgamated product case, if $\varphi(G_1)$ and $\varphi(G_2)$ are both nonzero there are
finitely many vertices  in $T/K$   (but infinitely many edges). Viewing $K$ as the fundamental group of the graph of groups $T/K$,  we get a surjection from $K$ onto
$\F_\infty$ by killing all the vertex groups.

If, say, $\varphi(G_1)=0$ (and therefore $\varphi(G_2)=\varphi(G)\ne0$),
then $T/K$ is a star with exactly one vertex of type 2  (carrying a conjugate of $G_2$) and
infinitely many valence 1 vertices of type 1. 
The groups carried by these terminal vertices are conjugates of
$G_1$.
By hypothesis, $G_1$ has first Betti number at least 2, so there exists a non-zero map from each terminal vertex
group to $\Z$ killing the incident edge group (hence also  its normal closure).
We map $K$ onto $\F_\infty$ by killing the central type 2 vertex
group, hence all edge groups, and piecing together the surjections to
$\Z$ of the resulting quotients of the infinitely many terminal vertices.

In the HNN case, $G\cong\langle G_1,t\mid  {t}^{-1}At=\sigma(A)\rangle$.
If $\varphi(G_1)$ is non-zero then,   as before,  there are finitely many vertices and 
 we get a surjection from $K$ onto
$\F_\infty$ by killing all the vertex groups.

If $\varphi(G_1)=0$, the image of $\varphi$ is generated by
$\varphi(t)$.
The valence of vertices in $T/K$ is the same as that in $T/G$, which
is 2, so $T/K$ is a line. Denote its edges $e(i)$, for $i\in\Z$.
The vertex stabilizers are conjugates of $G_1$.
To map $K$ onto $\F_\infty$, one should not kill all edge groups $G_{e(i)}$ because
vertex groups might be killed too, but one can kill every other edge group $G_{e(2i)}$.
This maps $K$ onto an infinite free product, and we check that each factor may be mapped onto $\Z$.

Such a factor has the form
$H=B*_{\langle c\rangle}D$, where each group $B$,  $D$ is isomorphic to the  quotient of $G_1$ by an incident edge group ($G_{e(2i)}$ or $G_{e(2i+2)}$).
Again, since the first Betti number of $G_1$ is at least 2, there exist
epimorphisms $\rho\from B\to\Z$ and $\tau\from D\to\Z$.
If $\rho(c)=\tau(c)=0$, then $H$ 
maps onto $\F_2$.
Otherwise, define a non-zero map from $H$ to $\Z$ as 
$\tau(c)$ times $\rho$ on $B$ and $\rho(c)$ times $\tau$ on $D$. 
 \end{proof}

\section{Mapping Tori of Polynomially Growing Free Group Automorphisms}\label{sec:mappingtori}

Given $\alpha\in\Aut(\F_n)$, we denote by $G_\alpha$ the mapping torus 
$$G_\alpha=\F_n\rtimes_\alpha\Z=\langle \F_n,t \mid t^{-1}gt=\alpha(g)\rangle.$$
Up to isomorphism, it only depends on the outer automorphism $\Phi\in\Out(\F_n)$ represented by $\alpha$, so we often write $G_\Phi$ rather than $G_\alpha$.

Let   $\varphi:G_\alpha\to\Z$ be the map sending $\F_n$ to $0$ and $t$ to 1.
We say that 
$\varphi$ (and any multiple of it) is a \emph{fibration} with \emph{fiber} $\F_n$ and \emph{monodromy} $\alpha$ (or $\Phi$).

Recall that $\alpha$ (or $\Phi$) is polynomially growing if, for any $g\in\F_n$,  the length of $\alpha^k(g)$ grows polynomially.

\begin{remark}
$\alpha$ has a well-defined degree of polynomial growth $d(\alpha)$, which is the maximal degree of growth of   
the length, with respect to some fixed 
word metric on $\F_n$, of the shortest conjugate of 
$\alpha^k(g)$.   Macura \cite{Mac02} has shown that $d(\alpha)$ is a quasi-isometry
  invariant of $G_\alpha$. In particular, if $\alpha\in\Aut(\F_n)$ and $\beta\in\Aut(\F_m)$ yield isomorphic mapping tori $G_\alpha\simeq G_\beta$, then $d(\alpha)=d(\beta)$.
\end{remark}

The main result of this section is the following:

 \begin{theorem}\label{main}
Let $G=G_\alpha=\F_n\rtimes_\alpha\Z$ for $\alpha\in\Aut(\F_n)$ polynomially growing, with $n\ge2$. 
There exist elements $t_1,\dots, t_{n-1}$ in $G\setminus \F_n $ (not
necessarily distinct), such that:
\[\Sigma(G)=-\Sigma(G)=\bigcap_{i}S(G,t_i)^c.\]

More precisely, for each non-zero $\varphi\from G\to \R$:
 \begin{itemize}
 \item If some $\varphi(t_i)$ is $0$, then
   $[\varphi]\notin\Sigma(G)$. If $[\varphi]$ is discrete then $\ker\varphi$ virtually
   surjects onto $\F_\infty$.
  \item If no $\varphi(t_i)$ is $0$, then 
    $[\varphi]\in\Sigma(G)\cap -\Sigma(G)$ and $\ker\varphi$ is free. 
If $[\varphi]$ is discrete then $\ker \varphi$ has finite rank. 
 \end{itemize}
\end{theorem}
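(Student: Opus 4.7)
The plan is to build a good $\Z$-hierarchy for a finite-index subgroup of $G$ with leaf group $\Z^2$, and apply \fullref{thm:Zhierarchy}.

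\textbf{Reduction to UPG.} Choose $k\ge 1$ such that $\alpha^k$ is UPG (possible since the image of $\alpha$ in $\GL_n(\Z)$ has roots of unity as eigenvalues, hence some power is unipotent). Let $G_0=\F_n\rtimes_{\alpha^k}\Z$, a subgroup of $G$ of index $k$. The standard behavior of $\Sigma$ under passage to finite-index subgroups yields $[\varphi]\in\Sigma(G)\iff[\varphi|_{G_0}]\in\Sigma(G_0)$, while $\ker(\varphi|_{G_0})$ has finite index in $\ker\varphi$. So it suffices to analyze $G_0$.

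\textbf{The hierarchy.} For UPG $\alpha^k$, the Bestvina-Feighn-Handel theory produces an $\alpha^k$-invariant filtration $\{1\}=F_0<F_1<\cdots<F_n=\F_n$ of $\F_n$ by free factors of increasing rank, together with generators $x_i$ satisfying $F_i=F_{i-1}*\langle x_i\rangle$ and, after adjustment by inner automorphisms, $\alpha^k(x_i)=w_ix_i$ for some $w_i\in F_{i-1}$. Setting $G_i=F_i\rtimes_{\alpha^k}\Z$, the relation $t^{-1}x_it=w_ix_i$ rewrites as $x_itx_i^{-1}=tw_i$, so $G_i$ is an HNN extension of $G_{i-1}$ over $\langle t\rangle\cong\Z$ with stable letter $x_i$. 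Iterating from the leaf $G_1=\Z^2\in\varsigma$ up to $G_n=G_0$ yields a $\Z$-hierarchy with $n-1$ splittings. The hierarchy is good: no splitting is ascending, because for $i\ge 2$ the edge group $\langle t\rangle$ is a proper subgroup of $G_{i-1}$, which contains the nontrivial free factor $F_{i-1}$. Every group $G_i$ has $b_1\ge 2$, since $\alpha^k_*-\Id$ is nilpotent on $F_i^{\mathrm{ab}}$, giving positive free rank in its cokernel.

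\textbf{Conclusion via \fullref{thm:Zhierarchy}.} All edge groups in this hierarchy coincide with the single subgroup $\langle t\rangle\le G_0$. Assertion (1) gives $\Sigma(G_0)=\{[\psi]:\psi(t)\ne 0\}$, automatically symmetric. Assertion (2), in the discrete case with $\psi(t)\ne 0$, presents $\ker\psi$ as a finite free product of cyclic subgroups of $\Z^2$, hence finitely generated and free. Assertion (3), applicable thanks to the Betti number condition, says that for discrete $\psi$ with $\psi(t)=0$, $\ker\psi$ surjects onto $\F_\infty$. Choosing $t_1=\cdots=t_{n-1}:=t\in G\setminus\F_n$ (allowed since the theorem permits repetition) transfers these statements to $G$: $\Sigma(G)=\{[\varphi]:\varphi(t)\ne 0\}=\bigcap_iS(G,t_i)^c$ is symmetric; if some $\varphi(t_i)=0$ then $[\varphi]\notin\Sigma(G)$, and in the discrete case the virtual surjection $\ker\varphi\twoheadrightarrow\F_\infty$ comes from the finite-index inclusion $\ker(\varphi|_{G_0})\le\ker\varphi$; if no $\varphi(t_i)=0$, then $\ker\varphi$ is virtually free and torsion-free (as $G$ is a semidirect product of torsion-free groups), hence free by Stallings, of finite rank in the discrete case.

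\textbf{Main obstacle.} The substantive step is the construction of the hierarchy: invoking the UPG filtration theorem, choosing the $x_i$, and arranging $\alpha^k(x_i)=w_ix_i$ with $w_i\in F_{i-1}$ so that each HNN extension has edge group exactly the infinite cyclic $\langle t\rangle$. Once the hierarchy is in place, the verification that it is good, that Betti numbers are at least $2$, and that the conclusions transfer from $G_0$ to $G$ is routine.
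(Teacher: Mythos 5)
Your plan to pass to a finite-index UPG subgroup, build a good $\Z$--hierarchy with $\Z^2$ leaves, and then apply \fullref{thm:Zhierarchy} is exactly the structure of the paper's proof (Lemmas~\ref{lem:power}, \ref{lem:upghierarchy}, and \fullref{pasnul}). The reduction to the UPG case, the transfer of conclusions to $G$ across the finite-index inclusion, the goodness and Betti-number checks, and the freeness/finite-rank deductions are all fine.

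The gap is in the construction of the hierarchy, specifically the assertion that BFH theory gives a filtration $\{1\}=F_0<F_1<\cdots<F_n=\F_n$ \emph{by free factors} with $F_i=F_{i-1}*\langle x_i\rangle$ and, for a \emph{single} representative $\alpha^k$, $\alpha^k(x_i)=w_ix_i$ with $w_i\in F_{i-1}$. Theorem~5.1.8 of Bestvina--Feighn--Handel produces a filtered graph $\emptyset=\Gamma_0\subset\cdots\subset\Gamma_k=\Gamma$ whose strata are single edges, but the intermediate subgraphs $\Gamma_i$ may be disconnected and $\Gamma$ may have several vertices, so the $\pi_1(\Gamma_i)$ do not form a nested chain of free factors of $\F_n$. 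Crucially, the topmost edge may be separating, which yields an \emph{amalgam} $G_{\alpha_1}*_{\langle t\rangle}G_{\alpha_2}$ rather than an HNN extension; this is exactly why \fullref{prop:poly} distinguishes two cases. Moreover, even in the HNN case, when one recurses on the base group $B_1$ one must choose a \emph{new} representative of the restricted outer automorphism (an adjustment by an inner automorphism of $B_1$), and this replaces the stable letter $t$ by $tg^{-1}$ for some $g\in\F_n$; the edge groups of the resulting hierarchy are therefore generated by varying elements of the coset $\F_n t$, not all equal to $t$.

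Your conclusion that all edge groups coincide with $\langle t\rangle$, so that $\Sigma(G_0)$ is the complement of a \emph{single} sphere $S(G_0,t)$, is in fact false in general. The example following \fullref{main} in the paper (the UPG automorphism supported on a circle of vertices with fixed loops) produces a hierarchy whose edge group generators $t_i$ satisfy relations of the form $t_{i-1}\sim t_ia_i^{-1}$, and the spheres $S(G,t_i)$ are shown there to be pairwise distinct. So your proposal proves a strictly stronger (and incorrect) statement; the missing content is precisely \fullref{prop:poly} and \fullref{lem:upghierarchy}, which handle the amalgam case and keep track of the distinct edge group generators.
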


\begin{corollary}\label{BNS}
If  the first Betti number of $G$ is at least 2,
then  $S(G)\setminus\Sigma(G)=\cup_{i}S(G,t_i)$ is a
non-empty collection of rationally defined great subspheres; in particular, $\Sigma(G)\ne S(G)$. \qed
\end{corollary}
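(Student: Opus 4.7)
The plan is to derive the corollary directly from \fullref{main}. That theorem already gives the equality $\Sigma(G)=\bigcap_{i}S(G,t_i)^c$, so taking complements inside $S(G)$ immediately yields $S(G)\setminus\Sigma(G)=\bigcup_{i}S(G,t_i)$. The only remaining content is to verify two small assertions: that each $S(G,t_i)$ is a rationally defined great subsphere, and that under the hypothesis $b_1(G)\geq 2$ this union is non-empty.

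For the first assertion, the composition $G\onto G/\F_n\cong\Z$ factors through the torsion-free quotient of the abelianization of $G$. Since \fullref{main} places $t_i$ in $G\setminus\F_n$, the image of $t_i$ in $G/\F_n$, and therefore in the torsion-free abelianization, is non-zero. Consequently the linear functional $\varphi\mapsto\varphi(t_i)$ on $\Hom(G,\R)$ is non-zero and is defined by a rational (in fact integral) form with respect to the natural lattice. Its projectivized kernel $S(G,t_i)$ is therefore a codimension-one rationally defined subsphere of $S(G)$, i.e.\ a rationally defined great subsphere.

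For the second assertion, the hypothesis $n\geq 2$ of \fullref{main} guarantees that at least one $t_i$ appears in the list. The hypothesis $b_1(G)\geq 2$ gives $\dim S(G)=b_1(G)-1\geq 1$, so each codimension-one subsphere $S(G,t_i)$ has dimension $\geq 0$ and is non-empty. Therefore $S(G)\setminus\Sigma(G)$ is non-empty, and in particular $\Sigma(G)\neq S(G)$. No serious obstacle arises here; the argument is simply a direct unwinding of \fullref{main} combined with the observation that elements outside $\F_n$ have non-trivial image in the rational abelianization of $G$.
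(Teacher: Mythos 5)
Your argument is correct and is essentially the unwinding the paper has in mind (the corollary is stated with a bare \(\qed\), i.e.\ as an immediate consequence of Theorem~\ref{main}). You correctly observe that each \(t_i\) has non-zero image in \(G/\F_n\cong\Z\), hence non-torsion image in the abelianization, so the functional \(\varphi\mapsto\varphi(t_i)\) is a non-zero rational functional and \(S(G,t_i)\) is a rationally defined great subsphere; and you correctly note that \(n\ge2\) guarantees the index set is non-empty while \(b_1(G)\ge2\) guarantees each such codimension-one subsphere is itself non-empty, so the union is non-empty and \(\Sigma(G)\ne S(G)\).
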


Using \fullref{sasc}, we also have:

\begin{corollary}\label{corollary:noascendingHNN}
  A mapping torus of a polynomially growing free group automorphism
  does not admit a decomposition as a strictly ascending HNN extension with finitely generated base group.
 \qed
\end{corollary}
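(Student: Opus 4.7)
The plan is to argue by contradiction, extracting the conclusion directly from the symmetry $\Sigma(G)=-\Sigma(G)$ provided by \fullref{main} together with the asymmetry of the BNS invariant of a strictly ascending HNN-extension recorded in \fullref{sasc}.

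First I would invoke \fullref{main}: for any polynomially growing $\alpha\in\Aut(\F_n)$ with $n\ge 2$, the mapping torus $G=G_\alpha$ satisfies $\Sigma(G)=-\Sigma(G)$, since $\Sigma(G)$ is described as the complement of a finite union of subspheres $S(G,t_i)$, a set which is manifestly invariant under $[\varphi]\mapsto[-\varphi]$. (The trivial cases $n=0,1$ are either vacuous or handled separately, since there $G$ is abelian or Klein-bottle-like and evidently not a strictly ascending HNN-extension with finitely generated base.)

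Next I would suppose, for contradiction, that $G$ can be written as a strictly ascending HNN-extension $G=\langle G_1,t\mid t^{-1}at=\sigma(a),\ a\in G_1\rangle$ with $G_1$ finitely generated and $\sigma(G_1)$ a proper subgroup of $G_1$. By \fullref{sasc} (equivalently, the discussion in \fullref{sasc}), this forces $\Sigma(G)\neq-\Sigma(G)$: the only classes vanishing on the edge group are $[\varphi_\pm]$ defined by $\varphi(G_1)=0$ and $\varphi(t)=\pm 1$, and $[\varphi_+]\in\Sigma(G)$ while $[\varphi_-]\notin\Sigma(G)$ because the extension is strictly ascending. This contradicts the symmetry established in the previous paragraph, completing the proof.

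There is essentially no obstacle here, since both ingredients have already been proved; the only point requiring a brief check is that the structural description of $\Sigma(G)$ from \fullref{main}, namely $\Sigma(G)=\bigcap_i S(G,t_i)^c$, is visibly symmetric, so the desired conclusion is immediate once \fullref{sasc} is applied. One could equivalently phrase the argument by noting that $G$ admits a good $\Z$-hierarchy whose leaf groups are in $\varsigma$ (the hierarchy used to establish \fullref{main}), and then applying \fullref{pasnul} directly, which asserts in its second sentence that such a $G$ admits no decomposition as a strictly ascending HNN-extension with finitely generated base group.
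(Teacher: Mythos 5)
Your proof is correct and follows the paper's intended argument exactly: the corollary is placed immediately after \fullref{main} (which gives $\Sigma(G)=-\Sigma(G)$) with the remark ``Using \fullref{sasc}, we also have,'' so the paper too deduces the result from the symmetry of $\Sigma(G)$ together with the asymmetry $\Sigma(G)\neq-\Sigma(G)$ for strictly ascending HNN-extensions with finitely generated base. Your alternative route via \fullref{pasnul} and the good $\Z$-hierarchy with $\Z^2$ leaves is also valid and is in fact how \fullref{main} itself is proved, so the two readings coincide.
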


These results do not hold for exponentially growing automorphisms, as
evidenced by the group $G$ constructed by Leary, Niblo, and Wise \cite{LeaNibWis99}. It  is the mapping torus of an automorphism of $\F_3$, and also of an injective, non surjective, endomorphism of $\F_2$. It does not satisfy $\Sigma(G)=-\Sigma(G)$, and there exists a discrete $[\varphi]$   such that  $\ker\varphi$ is a strictly increasing union of 2-generated subgroups, hence is infinitely generated but does not virtually surject onto $\F_\infty$.

\subsection{UPG automorphisms}

Recall the natural map $\tau:\Out(\F_n)\to\GL_n(\Z)$ recording the action of automorphisms on the abelianization of $\F_n$.

\begin{definition}[\cite{BesFeiHan00}]   
  $\upg(\F_n)$ is the set of polynomially growing elements  
  $\Phi\in\Out(\F_n)$ that have unipotent image in $\GL_n(\Z)$, and $\widehat\upg(\F_n)$ is the preimage of $\upg(\F_n)$ in $\Aut(\F_n)$.
\end{definition}

\begin{remark}\label{ki} 
If $n\ge1$ and $\Phi$ is UPG, then 1 is an  eigenvalue 
of $\tau(\Phi)$, and this guarantees  
  that $G_ {\Phi}$ has
  first Betti number at least $2$.
In particular, given any $g\in G_ \Phi$, there is a nontrivial map
from $G_\Phi$ to $\Z$ killing $g$. 
\end{remark}

\begin{lemma}[{\cite[Corollary~5.7.6]{BesFeiHan00}}]\label{lem:upg}
 Every polynomially growing element of $\Out(\F_n)$ has a power in $\upg(\F_n)$. \qed
\end{lemma}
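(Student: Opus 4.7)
The plan is to reduce to a linear-algebra question on the abelianization. Since $\upg(\F_n)$ consists of the polynomially growing elements whose image in $\GL_n(\Z)$ under $\tau$ is unipotent, and since any power of a polynomially growing automorphism remains polynomially growing, it suffices to exhibit an integer $N\ge 1$ for which $\tau(\Phi)^N=\tau(\Phi^N)$ is unipotent.

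First I would show that every eigenvalue of $\tau(\Phi)\in\GL_n(\Z)$ has absolute value $1$. Pick $\alpha\in\Aut(\F_n)$ representing $\Phi$ and a free basis $e_1,\dots,e_n$ of $\F_n$. The image of $\alpha^k(e_i)$ in the abelianization $\Z^n$ equals $\tau(\Phi)^k$ applied to the class of $e_i$, and its $\ell^1$-norm is bounded above by the free-group length of $\alpha^k(e_i)$, which grows polynomially in $k$ by hypothesis. Thus the operator norm $\|\tau(\Phi)^k\|$ grows polynomially, forcing $|\lambda|\le 1$ for every eigenvalue $\lambda$ of $\tau(\Phi)$. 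Combined with $|\det\tau(\Phi)|=1$, the product of the $|\lambda_i|$ equals $1$, so in fact $|\lambda|=1$ for each eigenvalue.

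Each eigenvalue is therefore an algebraic integer of degree at most $n$ over $\Q$ whose Galois conjugates, being eigenvalues of the same integer matrix, also have absolute value $1$. Kronecker's theorem then forces every eigenvalue to be a root of unity, and since only finitely many roots of unity have degree $\le n$ over $\Q$, if $N$ is the least common multiple of their orders then every eigenvalue of $\tau(\Phi)^N$ equals $1$. By Jordan decomposition $\tau(\Phi)^N$ is unipotent, and since $\Phi^N$ remains polynomially growing, $\Phi^N\in\upg(\F_n)$. The one subtle point is that polynomial growth in $\Out(\F_n)$ is only well-defined up to a conjugation ambiguity in the choice of $\alpha$, but since abelianization kills conjugation the bound on $\|\tau(\Phi)^k\|$ is insensitive to this ambiguity. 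The genuinely hard part of the BFH theory is the converse implication (that unipotent abelianized action combined with some extra data forces polynomial growth of $\Phi$ itself), which requires relative train tracks and is not needed in our direction.
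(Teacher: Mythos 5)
Your proof is correct. The paper itself gives no argument here: the lemma is stated with a citation to Bestvina--Feighn--Handel and a \qed. What you supply is the standard self-contained proof, and every step checks out. Polynomial growth of (cyclically reduced) lengths gives a polynomial bound on the $\ell^1$-norms of the columns of $\tau(\Phi)^k$, hence on $\|\tau(\Phi)^k\|$, so the spectral radius of $\tau(\Phi)$ is at most $1$; since $|\det\tau(\Phi)|=1$, all eigenvalues lie on the unit circle. Because the characteristic polynomial is a monic integer polynomial, the Galois conjugates of any eigenvalue are again eigenvalues, so Kronecker's theorem applies and each eigenvalue is a root of unity. Taking $N$ to be a common multiple of their orders makes every eigenvalue of $\tau(\Phi)^N=\tau(\Phi^N)$ equal to $1$, and a matrix with all eigenvalues equal to $1$ is unipotent (triangularize over the complex numbers and observe that $M-I$ is nilpotent); your appeal to ``Jordan decomposition'' is a slightly heavier phrase than needed, but the conclusion is right. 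You also correctly dispose of the two potential worries: $\Phi^N$ is still polynomially growing (a subsequence of a polynomial sequence is polynomial), and the norm bound is insensitive to which representative $\alpha\in\Aut(\F_n)$ of $\Phi$ is chosen, since abelianization is conjugation-invariant so it is the well-defined cyclically reduced length that controls $\|\tau(\Phi)^k(\bar e_i)\|_1$. This verifies exactly the definition of $\upg(\F_n)$ used in the paper, so the argument is complete. The closing aside about a ``converse implication'' is a little muddled --- there is no converse to this lemma being invoked --- but it plays no role in the proof.
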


\begin{lemma}
     If  $\Phi\in\upg(\F_n)$ has finite order, it is the identity.
\end{lemma}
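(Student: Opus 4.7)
The plan is to reduce to the torsion-freeness of the outer Torelli subgroup $\ker \tau \normal \Out(\F_n)$.

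First I would observe that if $\Phi^k$ is trivial, then $\tau(\Phi) \in \GL_n(\Z)$ is simultaneously unipotent (by definition of $\upg$) and of order dividing $k$. Over a field of characteristic zero the minimal polynomial of such a matrix divides both $(x-1)^n$ and $x^k-1$, and the greatest common divisor of these polynomials is $x-1$. Hence $\tau(\Phi) = I$, i.e.\ $\Phi$ lies in $\ker \tau$.

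Next I would invoke the classical fact that $\ker \tau$ is torsion-free. The standard route is Baumslag--Taylor's theorem, which asserts that the kernel of $\Aut(\F_n) \to \GL_n(\Z/3\Z)$ is torsion-free; this transfers to $\Out(\F_n)$ by combining with Culler's realization of every finite subgroup of $\Out(\F_n)$ as symmetries of a marked graph. The upshot is that the only torsion element of $\Out(\F_n)$ acting trivially on $H_1(\F_n;\Z)$ is the identity, which forces $\Phi = 1$.

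The only nontrivial ingredient is the torsion-freeness of $\ker \tau$; once that is granted, everything else is a one-line matrix computation. One could alternatively argue from the Bestvina--Feighn--Handel structure theory: a UPG outer automorphism has a relative train track representative built from ``upper triangular'' pieces, each of infinite order unless trivial, so a finite-order UPG automorphism must be trivial on every stratum.
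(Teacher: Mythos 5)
Your argument is essentially identical to the paper's: reduce to the observation that a unipotent finite-order matrix is the identity, so $\Phi \in \ker\tau$, and then cite the torsion-freeness of $\ker\tau$ (Baumslag--Taylor). The paper cites the Baumslag--Taylor result directly as stating $\ker\tau \subset \Out(\F_n)$ is torsion-free, so the detour through $\Aut(\F_n) \to \GL_n(\Z/3\Z)$ and Culler's realization theorem is unnecessary, but the core of the proof matches.
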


\begin{proof} Being unipotent and of finite order, $\tau(\Phi)$ is trivial, so $\Phi\in\ker\tau$. But $\ker\tau$ is torsion-free \cite{BauTay68}. 
\end{proof}

The other relevant fact about unipotent polynomially growing automorphisms that we
need is the existence of invariant free splittings of $\F_n$.
  \begin{proposition}\label{prop:poly}
Let $n\ge2$. 
Every $\Phi\in\upg(\F_n)$ has a representative $\alpha\in\Aut(\F_n)$ such that one of the following holds: 
   \begin{enumerate}
 \item There exists a non-trivial $\alpha$-invariant splitting $\F_n=B_1*B_2$, so    $\alpha=\alpha_1*\alpha_2$ with $\alpha_i=\alpha_{ | B_i}\in\Aut(B_i)$. 
  \label{item:separating}
 \item There exists a non-trivial splitting $\F_n=B_1*\langle x\rangle$, where $B_1\simeq \F_{n-1}$ is  $\alpha$--invariant, 
 and $\alpha(x)=xu$ with $u\in B_1$. We denote $\alpha_1=\alpha_{ | B_1}$.
 \label{item:nonseparating}
 \end{enumerate} 
\end{proposition}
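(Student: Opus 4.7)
The plan is to extract the splitting from the Kolchin-type theorem of Bestvina--Feighn--Handel for UPG automorphisms \cite{BesFeiHan00}. That theorem provides a representative $\alpha\in\Aut(\F_n)$ of $\Phi$ and a free basis $x_1,\ldots,x_n$ of $\F_n$ with respect to which $\alpha$ is upper-triangular:
\[
\alpha(x_i)=x_i u_i,\qquad u_i\in\langle x_1,\ldots,x_{i-1}\rangle,\qquad u_1=1.
\]
Granting this, the desired splitting is read off by inspecting the last basis element $x_n$.

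Concretely, set $B_1=\langle x_1,\ldots,x_{n-1}\rangle\cong\F_{n-1}$, which is non-trivial because $n\ge 2$. The upper-triangular form implies that $x_1, x_2u_2,\ldots, x_iu_i$ is a free basis of $\langle x_1,\ldots,x_i\rangle$ for every $i$ (a standard Nielsen-type observation: each $x_i \mapsto x_i u_i$ with $u_i$ in the previously listed generators preserves the property of being a basis). Hence $\alpha$ restricts to an automorphism of each of these nested free factors, in particular of $B_1$. If $u_n\ne 1$, then $B_1$ is $\alpha$-invariant and $\alpha(x_n)=x_n u_n$ with $u_n\in B_1$, placing us in case~(\ref{item:nonseparating}) with $x=x_n$ and $u=u_n$. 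If instead $u_n=1$, then $\alpha$ fixes $x_n$, the free splitting $\F_n=B_1*\langle x_n\rangle$ is preserved componentwise, and case~(\ref{item:separating}) holds with $B_2=\langle x_n\rangle\cong\Z$ and $\alpha_2=\Id$.

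The substance of the argument lies entirely in the first step: producing the upper-triangular representative. This is the core technical input from the train-track theory of polynomially growing automorphisms and relies on improved relative train tracks, construction of an $\alpha$-invariant filtration of a marked rose by free factors, and the identification of each UPG stratum with a Nielsen-type edge move. Once this input is in hand, the dichotomy between the two cases of the proposition is immediate: it is simply the question of whether $u_n$ is trivial. So the main obstacle is invoking (or reproving) the Kolchin theorem for $\upg(\F_n)$; after that, nothing remains but a one-line inspection of the action on the last basis element.
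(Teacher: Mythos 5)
Your proof is correct and follows essentially the same strategy as the paper's: extract the splitting from Bestvina--Feighn--Handel's train-track theory for UPG automorphisms by looking at the top of the filtration. But there is a small but real difference in the form of the BFH input you invoke, and it changes how the two cases of the proposition arise.

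The paper cites Theorem 5.1.8 of \cite{BesFeiHan00}, which produces a \emph{filtered graph} $(\Gamma, f)$ with valence $\geq 2$ (not necessarily a rose), where each stratum is a single edge $E_i$ with $f(E_i)=E_iu_i$ and $u_i$ a loop in $\Gamma_{i-1}$. The two cases of the proposition then correspond to whether the topmost edge $E_k$ is separating or non-separating in $\Gamma$; case~(1) is genuinely needed because $E_k$ can be a separating non-loop edge. Your proposal instead assumes a stronger reformulation: a representative that is upper-triangular with respect to a \emph{free basis}, equivalently a filtered \emph{rose}. In a rose every edge is a non-separating loop, so the separating/non-separating dichotomy disappears, and your dichotomy ($u_n=1$ versus $u_n\ne1$) is what remains. (Indeed, note your $u_n=1$ branch is really a degenerate instance of case~(2) with $u=1$, repackaged as case~(1) with $B_2=\Z$.) This is correct, but the upper-triangular-basis form is not literally what Theorem 5.1.8 gives; passing from the filtered graph to a filtered rose requires an extra argument (collapse a spanning tree compatibly with the filtration and check the upper-triangular structure descends). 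This step is standard in the UPG literature, but you should either carry it out or point to a reference that states the rose form directly, rather than presenting it as an immediate consequence of \cite{BesFeiHan00}. The paper's argument sidesteps this by working with the filtered graph as given, at the cost of having to handle the separating case.
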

\begin{proof}
This is a consequence of Bestvina, Feighn, and Handel's train track
theory for free group automorphisms. 
By \cite[Theorem~5.1.8]{BesFeiHan00}, there exists a graph $\Gamma$
with valence at least 2 and fundamental group $\F_n$, a homotopy
equivalence $f\from \Gamma\to\Gamma$ inducing $\Phi$ on the
fundamental group, and a filtration   $\emptyset=\Gamma_0\subset
\Gamma_1\subset\dots\subset \Gamma_k=\Gamma$ satisfying several
properties.
Relevant for us are that $f$ fixes the vertices of $\Gamma$,
for every $i$ the stratum $\Gamma_i\setminus\Gamma_{i-1}$ is a single edge $E_i$,
and $f(E_i)=E_iu_i$ where $u_i$ is a loop in $\Gamma_{i-1}$.

The proposition is proven by considering the topmost stratum $E_k$ of the
filtration.
If $E_k$ is a separating edge then we are in case
(\ref{item:separating}), and $B_1$ and $B_2$ are the fundamental
groups of the two components of $\Gamma_{k-1}$.
If $E_k$ is a non-separating edge then we are in case
(\ref{item:nonseparating}), and $B_1$ is the fundamental group of
$\Gamma_{k-1}$.
\end{proof}

\begin{lemma}\label{lem:upghierarchy}
  If $\Phi\in\upg(\F_n)$ then $G_\Phi$ admits a good $\Z$--hierarchy
  with $\Z^2$ leaves and
   $n-1$ splittings.
Moreover, each edge group  in the hierarchy has trivial intersection with the fiber $\F_n$, and  each vertex group has first Betti number at least 2.
\end{lemma}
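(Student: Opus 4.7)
\emph{Proof proposal.} The plan is to induct on $n$. The base case $n=1$ is immediate, since $\upg(\F_1)=\{\Id\}$ (the only unipotent element of $\GL_1(\Z)=\{\pm1\}$ is $1$), so $G_\Phi\cong\Z^2$ serves as the trivial hierarchy with a single $\Z^2$ leaf, no splittings, and $b_1=2$.

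For the inductive step with $n\ge 2$, I select a representative $\alpha$ of $\Phi$ as furnished by \fullref{prop:poly}. In case (\ref{item:separating}), the invariant free splitting $\F_n=B_1*B_2$ gives $G_\alpha=G_{\alpha_1}*_{\langle t\rangle}G_{\alpha_2}$ over the infinite cyclic group generated by the stable letter $t$. In case (\ref{item:nonseparating}), the relation $t^{-1}xt=xu$ rearranges to $x^{-1}tx=tu^{-1}$, exhibiting $G_\alpha$ as an HNN extension of $G_{\alpha_1}$ with stable letter $x$ and both associated subgroups---namely $\langle t\rangle$ and $\langle tu^{-1}\rangle$---infinite cyclic. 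In either scenario I apply the induction hypothesis to each $G_{\alpha_i}$, whose fiber has rank strictly smaller than $n$, and splice the resulting subhierarchies under the root splitting.

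The remaining verifications are: (i) the restricted automorphisms $\alpha_i$ are UPG, since polynomial growth is inherited by invariant subgroups and the abelianization action is a block-diagonal summand (case (\ref{item:separating})) or the top-left block (case (\ref{item:nonseparating})) of a unipotent matrix; (ii) the splitting count is $1+(r_1-1)+(r_2-1)=n-1$ or $1+(n-2)=n-1$, where $r_i=\rank(B_i)$; (iii) each root splitting is reduced and non-ascending, the former because $\langle t\rangle$ is proper in each $G_{\alpha_i}$ (as $B_i\ne 1$), the latter because $G_{\alpha_1}$ is non-cyclic (having $b_1\ge 2$ by \fullref{ki}) so its cyclic associated subgroups are proper; (iv) every group appearing in the hierarchy is a mapping torus of a UPG automorphism and hence has $b_1\ge 2$ by \fullref{ki}; and (v) edge groups trivially intersect $\F_n$---the new root edge groups because $t,x\notin\F_n$, and the edge groups in a subhierarchy below $G_{\alpha_i}$ by induction combined with the identity $G_{\alpha_i}\cap\F_n=B_i$. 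The main obstacle is the setup of case (\ref{item:nonseparating}): recognizing that the correct stable letter is $x$, not $t$, and that this choice makes the HNN extension visibly non-ascending.
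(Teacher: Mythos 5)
Your proof is correct and follows essentially the same route as the paper: induction on $n$, invoking \fullref{prop:poly} to produce the first splitting as an amalgam over $\langle t\rangle$ or an HNN extension with stable letter $x$ and cyclic associated subgroups $\langle t\rangle$, $\langle tu^{-1}\rangle$, then splicing the inductive hierarchies for the $G_{\alpha_i}$. The added detail in your verifications (i) and (v) fills in steps the paper leaves implicit but does not change the argument.
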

\begin{proof}
The lemma is proved by induction on $n$.
If $n=1$ then $G_\Phi=\Z^2$ and we take the trivial hierarchy, which
has $n-1=0$ splittings.
Otherwise, we construct the first splitting in the hierarchy as follows. Apply \fullref{prop:poly}. In the first case, we   write  $G_\alpha=G_{\alpha_1}*_{\langle t\rangle}G_{\alpha_2}$. In the second case, we have $$G_\alpha=\langle B_1,x,t\mid t^{-1}bt=\alpha_1(b),\, t^{-1}xt=xu \rangle=\langle G_{\alpha_1},x\mid x^{-1}tx=tu^{-1}\rangle,$$
and   we consider the HNN-extension
$G_\alpha=G_{\alpha_1}*_{\Z}$   
(which is not ascending since
$G_{\alpha_1}$ is the mapping torus of an automorphism of $B_1\cong\F_{n-1}$, so
it is not cyclic).

By the induction hypothesis, each $G_{\alpha_i}$ admits a good 
$\Z$--hierarchy   with $\Z^2$ leaves.
Take the hierarchy for $G$ consisting of the   splitting we just constructed, and then the
hierarchies for the $G_{\alpha_i}$.
In the amalgamated product case the number of splittings is 
$1+(\mathrm{rank}(B_1)-1)+(\mathrm{rank}(B_2)-1)=n-1$.
In the HNN extension case the number of splittings is
$1+(\mathrm{rank}(B_1)-1)=n-1$.

The claim about edge groups is clear from the way the hierarchy is constructed, and the Betti number claim follows from \fullref{ki}: since $B_i$ as in \fullref{prop:poly} is   an $\alpha$-invariant  free factor of $\F_n$, the restriction $\alpha_i$ is UPG.
\end{proof}

\subsection{Proof of \fullref{main}}
We first note that it  suffices to prove the theorem for a power of $\alpha$:
 \begin{lemma}\label{lem:power}
If the theorem is true for $\alpha^p$ with $p\ge 2$, it is true for $\alpha$. 
 \end{lemma}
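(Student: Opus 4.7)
\textbf{Proof plan for \fullref{lem:power}.} The key observation is that $G_{\alpha^p}=\langle \F_n,t^p\rangle$ has index $p$ in $G_\alpha=\langle\F_n,t\rangle$, with the same fiber $\F_n$. My plan is to transfer every assertion of the theorem along this finite-index inclusion via the standard principle that the BNS invariant is invariant under passage to finite-index subgroups.

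First, I would invoke the following fact: if $H$ is a finite-index subgroup of a finitely generated group $G$ and $\varphi\from G\to\R$ is nonzero, then $\varphi|_H$ is nonzero (since $G/H$ is finite and $\R$ is torsion-free) and $[\varphi]\in\Sigma(G)$ if and only if $[\varphi|_H]\in\Sigma(H)$. Applying this to $H=G_{\alpha^p}\leq G_\alpha$, together with the theorem for $\alpha^p$, yields elements $t_1,\dots,t_{n-1}\in G_{\alpha^p}\setminus \F_n\subset G_\alpha\setminus\F_n$ such that, for nonzero $\varphi\from G_\alpha\to\R$ and $\varphi(t_i)=(\varphi|_{G_{\alpha^p}})(t_i)$, we have $[\varphi]\in\Sigma(G_\alpha)$ iff no $\varphi(t_i)=0$. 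This immediately gives
\[\Sigma(G_\alpha)=-\Sigma(G_\alpha)=\bigcap_i S(G_\alpha,t_i)^c.\]

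Next, I would analyze $K=\ker\varphi$ via the finite-index subgroup $K'=K\cap G_{\alpha^p}=\ker(\varphi|_{G_{\alpha^p}})$. If $[\varphi]$ is discrete, so is $[\varphi|_{G_{\alpha^p}}]$, since its image is a nontrivial subgroup of the discrete group $\varphi(G_\alpha)\subset\R$. If some $\varphi(t_i)=0$ and $[\varphi]$ is discrete, the hypothesis for $\alpha^p$ gives that $K'$ virtually surjects onto $\F_\infty$; as $K'$ has finite index in $K$, the group $K$ virtually surjects onto $\F_\infty$ as well. If no $\varphi(t_i)=0$, then $K'$ is free (of finite rank when $[\varphi]$ is discrete). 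Since $G_\alpha$ is torsion-free, $K$ is a torsion-free group containing the free group $K'$ with finite index; such a group is itself free, and contains a finite-rank free subgroup of finite index only if it is itself of finite rank (by the Schreier index formula run in reverse).

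The main subtlety I anticipate is the step \emph{torsion-free and virtually free implies free}: this is classical via Stallings in the finitely generated case, but when $K'$ has infinite rank I would justify it by promoting the free $K'$-action on its Cayley tree to a free $K$-action on an enlarged tree (an induction construction), with quotient a graph that presents $K$ as a free group because $K$ has no torsion and edge/vertex stabilizers are trivial. Modulo this well-known invariance-under-finite-index and torsion-free-virtually-free lemma, every case in the conclusion of \fullref{main} for $\varphi$ on $G_\alpha$ is reduced to the corresponding case for $\varphi|_{G_{\alpha^p}}$ on $G_{\alpha^p}$.
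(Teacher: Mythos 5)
Your proposal is correct and follows the same route as the paper: pass to the index-$p$ subgroup $G_{\alpha^p}=\langle\F_n,t^p\rangle$, invoke finite-index invariance of $\Sigma$ (the paper cites Strebel's Proposition~B1.11), reuse the same $t_i$'s, and transfer both the virtual surjection onto $\F_\infty$ and the freeness of $\ker\varphi$, the latter via the fact that a torsion-free virtually free group is free (the paper simply cites Swan). Your sketch of that last fact has a small wrinkle---inducing the free $K'$-action on a tree up to $K$ gives a $K$-forest with $[K:K']$ components, not a tree, so one cannot directly read off a graph-of-groups presentation of $K$---but since you explicitly flag this as a well-known lemma to be quoted rather than reproved, the overall argument is sound and matches the paper's.
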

 \begin{proof} $G_{\alpha^p}=\langle \F_n,t^p\rangle$ is contained in
   $G_\alpha$ with finite index, so $\ker \varphi_{ |G_{\alpha^p}}$
   is contained in $\ker\varphi$ with finite index. 
   By   {\cite[Proposition~B1.11]{Str12}}, $[ \varphi_{ |G_{\alpha^p}}]\in\Sigma (G_{\alpha^p})$ if and only if   $[ \varphi ]\in\Sigma (G_{\alpha })$. 
   
   Applying the theorem to $\alpha^p$ yields elements $t_i$ in
   $G_{\alpha^p}\setminus \F_n$, hence  in $G_{\alpha }\setminus \F_n$.
These elements also work for $\alpha$. 
If $\ker \varphi_{
  |G_{\alpha^p}}$ virtually maps onto $\F_\infty$, so does
$\ker\varphi$. If $\ker \varphi_{ |G_{\alpha^p}}$ is free, so is
$\ker\varphi$ because it is torsion-free and  virtually free \cite{Swa69}.
 \end{proof}

By \fullref{lem:power} and \fullref{lem:upg} we may assume that $\Phi$ is UPG.
Take the hierarchy for $G$ provided by
\fullref{lem:upghierarchy}. 
Define $t_1,\dots,t_{n-1}$ as generators of the edge groups $A_i$ which occur  in the hierarchy.

\fullref{pasnul} says  that  
 $\Sigma(G)=\bigcap_{i\in\mathcal{I}} S(G,A_i)^c$,
so  $[\varphi]\in\Sigma(G)$ if and only if no $\varphi(t_i)$ equals 0. The $t_i$'s do not belong to $\F_n$ by 
the  ``moreover'' of \fullref{lem:upghierarchy}.

If $[\varphi]\in\Sigma(G)^c$ is discrete, \fullref{thm:Zhierarchy} 
says
$\ker\varphi$ surjects onto $\F_\infty$.

If $[\varphi]\in\Sigma(G)$, \fullref{thm:Zhierarchy} 
  says
$\ker\varphi$ is a free product whose factors are isomorphic to groups
$\ker\varphi_{|H_j}$, where $\{H_j\mid
j\in\mathcal{J}\}$ are the leaf groups of the hierarchy and $\ker\varphi _{|H_j}\ne H_j$. 
In this case, the leaf groups are $\Z^2$, so 
$\ker\varphi _{|H_j}$ is either $1$ or
$\Z$.
Thus, $\ker\varphi$ is a free group, and if $[\varphi]$ is discrete
then $\ker\varphi$ has finite rank.

This completes the 
proof of \fullref{main}.

 \begin{remark}
   Strebel \cite[Problem B1.13]{Str12} notes that $\Sigma$ is well
   behaved upon passing to finite index subgroups, and asks for
   examples in which calculating $\Sigma(G)$ directly is difficult,
   but $G$ contains a finite index subgroup $H$ for which $\Sigma(H)$
   can be computed.
Mapping tori of polynomially growing free group automorphisms provide
such examples.
 \end{remark}

\subsection{Examples}
\fullref{main} claims that 
$S(G)\setminus\Sigma(G)=\cup_{i}S(G,t_i)$
is the union of $n-1$ great subspheres. 
These subspheres are not necessarily distinct. 
For instance, 
the mapping torus of the trivial automorphism of $\F_n$ is isomorphic
to $\F_n\times \Z$; in this case the complement of $\Sigma(G)$ is a single sphere $S(G,t)$, with $t$ a generator of the $\Z$ factor. 
On the other hand, the following example shows that $n-1$
distinct spheres may be required.
\begin{example} 
Take a graph $\Gamma$ that is a circle with vertices $v_0,\dots,v_{n-1}$ and
edges $b_i=[v_{i-1}, v_i]$, with indices modulo $n$. 
At each vertex $v_i$ add a loop $a_i$.
Define a relative train track map on this graph fixing each $a_i$ and
sending each $b_i$ to $b_i a_i$.
This induces an outer automorphism $\Phi$ of the fundamental group $\F_n$ of
the graph, and we let $G=G_\Phi$. Choosing $v_i$ as a basepoint for $\Gamma$ defines a representative $\alpha_i\in\Aut(\F_n)$ of $\Phi$, and there are associated stable letters $t_i$ such that   $G =\F_n\rtimes_{\alpha_i}\Z=\langle \F_n,t \mid t_igt_i^{-1}={\alpha_i}(g)\rangle.$ These are the elements featured in \fullref{main}.

The mapping torus can be written as a graph of groups with underlying
graph a circle with edges corresponding to the $b_i$'s.
The vertex stabilizers are $\Z^2=\langle a_i,t_i\rangle$, because the $a_i$'s are fixed by
the automorphism.
The edge stabilizers are infinite cyclic. 
The edge corresponding to $b_i$ amalgamates $t_{i-1}$ to $t_ia^{-1}_i$. Since the images of 
$a_1,a_1a_2,\dots, a_1\cdots a_{n-2}$ in the abelianization of $G$ are
linearly independent, the spheres $S(G,t_i)$ are
distinct.  
\end{example}
Note the example is a linearly growing automorphism, so the number of spheres does not
correlate to degree of growth.
 
One might guess that the rank of the fiber blows up near
$\Sigma(G)^c$. 
The following example shows that this is not necessarily true.
\begin{example}\label{prod}
Consider $G=\F_n\times \Z=\langle x_1,\dots,x_n\rangle\times\langle
z\rangle$. 
Take positive, coprime integers $p$ and $q$,
and define $\varphi_{p,q}\from G\onto\Z$ by $\varphi_{p,q}(x_i)=p$ and
$\varphi_{p,q}(z)=q$.
Projection to the $\F_n$ factor is injective on $\ker\varphi_{p,q}$, and the
image is an index $q$ subgroup of $\F_n$ (consisting of all elements with exponent sum divisible by $q$), so 
$\ker\varphi_{p,q}$ has rank $q(n-1)+1$. 
It is a fiber of a fibration (whose monodromy has order $q$).
Now $[\varphi_{p,q}]=[\varphi_{1,\sfrac{q}{p}}]$, so
if we fix any $q$ and let $p$ grow, the sequence $([\varphi_{p,q}])_p\subset\Sigma(G)$
converges to $[\varphi_{1,0}]\in\Sigma(G)^c$ through fibrations with
constant fiber rank. 
\end{example}

\section{The rank of the fiber} \label{rank}

Let $G=G_\alpha=\F_n\rtimes_\alpha\Z$ be the mapping torus of a
polynomially growing automorphism  $\alpha\in\Aut(\F_n)$, with
$n\ge2$. 
By \fullref{main}, there exist elements $t_1,\dots, t_{n-1}$ in
$G\setminus \F_n $ (not necessarily distinct),
such that, given any surjection
$\varphi:G_\alpha\onto \Z$, either 
 some $\varphi(t_i)$ is $0$ and
  $\ker\varphi$ is infinitely generated, or   no $\varphi(t_i)$ is $0$   and $\ker\varphi$ is free of finite rank. 

  The main result of this section is the following.

\begin{theorem}\label{prop:fiberrank}
Let $\alpha\in\Aut(\F_n)$ be polynomially growing, and let $G_\alpha$ be its mapping
torus. There exist elements  $t_1,\dots,t_{n-1}\in G_\alpha\setminus \F_n$  as in \fullref{main} such that,
if $\varphi:G_\alpha\onto \Z$ is a surjection such that no $\varphi(t_i)=0$, the rank of the free group $\ker\varphi$ is
\[r=1+\frac1k\sum_{i=1}^{n-1} | \varphi(t_i) | \] 
with $k$   the least positive integer such that $\alpha^k\in\widehat\upg(\F_n)$.
\end{theorem}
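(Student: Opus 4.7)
The plan is to first establish the formula in the UPG case via Bass-Serre theory on the hierarchy from \fullref{lem:upghierarchy}, then pass to general polynomially growing $\alpha$ by the Schreier index formula.

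In the UPG case the hierarchy presents $G_\alpha$ as the fundamental group of a finite graph of groups $\Gamma$ with $n-1$ edges, every edge group infinite cyclic (generated by one of the $t_i$), and every vertex group isomorphic to $\Z^2$. I would let $T$ denote the corresponding Bass-Serre tree and consider $K := \ker \varphi$ acting on $T$. Under the hypothesis that no $\varphi(t_i) = 0$, $\varphi$ restricts injectively to each edge group and nontrivially (hence with $\Z$ kernel) to each vertex group $\Z^2$, so $K$ acts with trivial edge stabilizers and infinite cyclic vertex stabilizers. Discreteness of $[\varphi]$ makes $T/K$ finite, and then elementary Bass-Serre theory gives $\rank(K) = |E(T/K)| + 1$: each of the finitely many $\Z$ vertex stabilizers contributes one generator, and the free part of $K$ has rank $|E(T/K)| - |V(T/K)| + 1$. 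The count of $|E(T/K)|$ follows immediately from \fullref{lem:orbitcounting} applied to each edge orbit: the orbit stabilized by $\langle t_i \rangle$ breaks into $[\Z : \varphi(t_i)\Z] = |\varphi(t_i)|$ orbits under $K$. Summing over the $n-1$ edge orbits yields the formula with $k=1$.

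For general polynomially growing $\alpha$, I would let $k$ be the least positive integer with $\alpha^k \in \widehat\upg(\F_n)$ (using \fullref{lem:upg}), set $G' := G_{\alpha^k}$ (index $k$ in $G_\alpha$), and take the $t_i$ furnished by the UPG case applied to $\alpha^k$; these already serve as the data of \fullref{main} for $\alpha$ via the reduction in \fullref{lem:power}. Set $\varphi' := \varphi|_{G'}$. The key arithmetic input is that if $\varphi(\F_n) = d\Z$, then surjectivity of $\varphi$ forces $\gcd(d, \varphi(t)) = 1$, from which $\varphi'(G') = \gcd(d,k)\Z =: d'\Z$. The UPG formula applied to the surjection $\varphi'/d'$ yields $\rank(\ker \varphi') = 1 + \frac{1}{d'}\sum_i |\varphi(t_i)|$, and Schreier's index formula for free groups gives $\rank(\ker\varphi') - 1 = m \cdot (\rank(\ker\varphi) - 1)$, where $m = [\ker\varphi : \ker\varphi']$ equals the size of the image of $\ker\varphi$ in $G_\alpha/G' \cong \Z/k$.

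The core computational step is to identify this image as $d\Z/k\Z$, of order $k/d'$, which again uses $\gcd(d, \varphi(t)) = 1$ to solve $\varphi(ft^j) = 0$ for $f \in \F_n$ and $j \in \Z$. Then $m = k/d'$, the two factors of $d'$ cancel, and the formula $\rank(\ker\varphi) = 1 + \frac{1}{k}\sum_i |\varphi(t_i)|$ drops out with the desired factor $1/k$ that is \emph{independent of $\varphi$}. I expect this arithmetic coordination between the Schreier index and the divisibility defect $d'$ to be the main subtle point; the Bass-Serre rank count in the UPG step is otherwise routine once the hierarchy from \fullref{lem:upghierarchy} is in hand.
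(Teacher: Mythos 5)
The Schreier/index reduction from general polynomially growing $\alpha$ to the UPG case is correct and, apart from notation, the same as the paper's (which phrases the same arithmetic with the homogenized quantity $[G_\alpha:H]_\varphi$). The problem is the UPG base case, where you assert that \fullref{lem:upghierarchy} ``presents $G_\alpha$ as the fundamental group of a finite graph of groups $\Gamma$ with $n-1$ edges, every edge group infinite cyclic (generated by one of the $t_i$), and every vertex group isomorphic to $\Z^2$,'' and then do a single global Bass-Serre count on that tree. A hierarchy is an \emph{iterated} sequence of one-edge splittings, not a single graph-of-groups decomposition; to merge the successive splittings into one tree you must refine, and refinement requires the edge group(s) of an outer splitting to be elliptic in the Bass-Serre tree of the inner one. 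This can fail for the hierarchy of \fullref{lem:upghierarchy}.

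Concretely, take $n=3$ and $\alpha(x_1)=x_1$, $\alpha(x_2)=x_2x_1$, $\alpha(x_3)=x_3x_2$ (a quadratically growing UPG automorphism). The hierarchy gives first the HNN splitting $G_\alpha = G_{\alpha_1}\ast_{\langle t\rangle,\,\langle tx_2^{-1}\rangle}$ with stable letter $x_3$, and then the HNN splitting $G_{\alpha_1}=\langle x_1,t\rangle \ast_{\langle t\rangle,\,\langle tx_1^{-1}\rangle}$ with stable letter $x_2$. For the two to merge into a single tree, both $\langle t\rangle$ and $\langle t x_2^{-1}\rangle$ must be elliptic in the second tree. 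But $tx_2^{-1}$ is hyperbolic: in the abelianization of $G_{\alpha_1}$, which is $\Z/1\oplus\Z\langle t\rangle\oplus\Z\langle x_2\rangle$, the element $tx_2^{-1}$ has a non-zero $x_2$-coordinate, while the vertex group $\langle x_1,t\rangle$ maps to the $x_2=0$ slice, so $tx_2^{-1}$ is not conjugate into the vertex group. So the hierarchy does not refine to a tree with $\Z^2$ vertex stabilizers, and your global orbit count has nothing to act on. (That you still reach the right formula is because the formula is correct; the route to it is what breaks.)

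The paper sidesteps this exactly by \emph{not} assembling a global tree: it inducts on the hierarchy one splitting at a time, writing $K=\ker\varphi$ as the fundamental group of the finite graph $T/K$ for the \emph{first} splitting only, with trivial edge groups and vertex groups $\ker(\varphi|_{G_i})$, and then computes the ranks of those vertex groups by applying the induction hypothesis to the sub-hierarchies of the $G_i$. The bookkeeping must also account for $\varphi|_{G_i}$ not being surjective onto $\Z$, which is why the paper introduces the homogenized index $[G_\alpha:H]_\varphi$ and checks the telescoping identity $1-[G_\alpha:G_1]_\varphi+[G_\alpha:t_1]_\varphi+[G_\alpha:G_1]_\varphi\bigl(1+\sum_{i\ge 2}[G_1:t_i]_\varphi\bigr)=1+\sum_i[G_\alpha:t_i]_\varphi$. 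If you want to salvage your approach, you would need either to run the orbit count inductively exactly as the paper does, or to prove separately that a UPG mapping torus admits some graph-of-$\Z^2$'s-over-$\Z$ decomposition and that its edge groups have the same $\varphi$-images as the hierarchy's $t_i$ --- neither of which is supplied by \fullref{lem:upghierarchy}.
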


 \begin{example} If $G=\F_n\times\Z$ is as in \fullref{prod}, one may take all $t_i$'s equal to $z$, and $r=1+(n-1) | \varphi(z) | $.   In particular, the rank of $\ker \varphi_{p,q}$ is $1+(n-1)q$. See \fullref{cen} for groups which are virtually $\F_n\times\Z$.
 \end{example}

\begin{corollary}  
Fibers with UPG monodromy have minimal rank.
\end{corollary}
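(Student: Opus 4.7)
The plan is first to reduce to the UPG case via \fullref{lem:power} and Nielsen--Schreier, then in the UPG case to induct along the hierarchy of \fullref{lem:upghierarchy}, computing $\mathrm{rank}(\ker\varphi)$ from the action of $\ker\varphi$ on each Bass--Serre tree together with \fullref{lem:orbitcounting}.

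For the reduction, let $H = G_{\alpha^k}$ (index $k$ in $G$), inside which the $t_i$ of \fullref{main} are defined. Write $d = [\Z : \varphi(H)]$ and let $\psi = d^{-1}\varphi|_H : H \onto \Z$; then $\ker\psi = \ker\varphi \cap H$, and by the second isomorphism theorem inside $G/H \cong \Z/k\Z$ this kernel has index $k/d$ in $\ker\varphi$. Granted the UPG formula for $H$, namely $\mathrm{rank}(\ker\psi) = 1 + \sum_i |\psi(t_i)| = 1 + \tfrac{1}{d}\sum_i |\varphi(t_i)|$, Nielsen--Schreier applied to the free groups $\ker\psi \subset \ker\varphi$ then yields $\mathrm{rank}(\ker\varphi) = 1 + \tfrac{1}{k}\sum_i |\varphi(t_i)|$.

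For the UPG case I induct on $n$. At the first splitting of the hierarchy, $G = G_{\alpha_1} *_{\langle t_1\rangle} G_{\alpha_2}$ or $G = G_{\alpha_1}*_{\langle t_1\rangle}$, the kernel $K = \ker\varphi$ acts on the Bass--Serre tree with trivial edge stabilizers because $\varphi(t_1) \neq 0$. By \fullref{lem:orbitcounting} applied to $G/K \cong \Z$, the quotient graph $Y$ has exactly $|\varphi(t_1)|$ edges and, for each factor $G_{\alpha_j}$, $d_j := [\Z : \varphi(G_{\alpha_j})]$ vertex orbits — all finite, because $\langle t_1 \rangle \subset G_{\alpha_j}$. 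By Bass--Serre, $K$ decomposes as a free product of $d_j$ conjugates of each $\ker(\varphi|_{G_{\alpha_j}})$ with a free group of rank $\beta_1(Y) = |\varphi(t_1)| - \sum_j d_j + 1$. Applying induction to each $G_{\alpha_j}$ — itself the mapping torus of a UPG automorphism of a free factor of $\F_n$ by \fullref{lem:upghierarchy} — we obtain $\mathrm{rank}(\ker\varphi|_{G_{\alpha_j}}) = 1 + \tfrac{1}{d_j}\sum_{t_i \in \text{sub-hierarchy of }G_{\alpha_j}}|\varphi(t_i)|$; summing, the $d_j$'s cancel against the corresponding orbit counts and one recovers $1 + \sum_{i=1}^{n-1}|\varphi(t_i)|$.

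The main technical difficulty I expect is the bookkeeping of normalizations: neither $\varphi|_H$ in the reduction nor the restrictions $\varphi|_{G_{\alpha_j}}$ in the induction need be surjections onto $\Z$, so Nielsen--Schreier and the inductive hypothesis must be applied to rescaled surjections $d^{-1}\varphi|_{\cdot}$, and one has to verify that each such rescaling factor cancels precisely against the corresponding number of vertex orbits in the Bass--Serre quotient. Once this is handled cleanly, the rest is a routine combination of \fullref{thm:Zhierarchy}, Bass--Serre theory, and Nielsen--Schreier.
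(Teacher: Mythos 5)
Your proposal does not actually prove the statement. What you sketch is a re-derivation of the rank formula
$r = 1 + \tfrac{1}{k}\sum_i |\varphi(t_i)|$, which is precisely \fullref{prop:fiberrank} and is already established just above the corollary. Nowhere in your argument do you draw the conclusion that a fiber with UPG monodromy has \emph{minimal} rank among all fibers of $G$. That deduction is the entire content of the corollary, and it is missing.

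The corollary is meant to be a one-line consequence of \fullref{prop:fiberrank}: suppose $\alpha\in\Aut(\F_n)$ is UPG, so that $G=G_\alpha$ and the defining projection has fiber $\F_n$. For UPG $\alpha$, $k=1$. For any other fibration $\varphi\colon G\onto\Z$ with kernel $\F_r$, the fibration condition from \fullref{main} forces every $\varphi(t_i)\neq 0$, hence $|\varphi(t_i)|\geq 1$ for each of the $n-1$ indices. Plugging into the formula gives $r = 1 + \sum_{i=1}^{n-1}|\varphi(t_i)| \geq 1 + (n-1) = n$. Therefore the UPG fiber $\F_n$ has rank no larger than that of any other fiber of $G$. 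You have spent all your effort re-proving the theorem (largely along the paper's own lines, with the same issues about normalizing non-surjective restrictions that the paper resolves via the $[\,\cdot:\cdot\,]_\varphi$ notation), but you have omitted the observation $|\varphi(t_i)|\geq 1$, which is what makes the bound $r\geq n$ — and hence minimality — drop out.
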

\begin{proof}Suppose $\alpha\in\Aut(\F_n)$ is UPG.
  If $\varphi:G_\alpha\onto\Z$ has kernel  $\F_r$,
we have 
$$r=1+\sum_{i=1}^{n-1}|\varphi(t_i)|\geq n$$  since   $k=1$ and $|\varphi(t_i)|\geq 1$ for all $i$.  
\end{proof}

\begin{corollary}\label{corollary:unboundedfiberrank}
If $G$ is the mapping torus of a polynomially growing automorphism of
a non-abelian free group, and the first Betti number of $G$ is at least
2, then $G$ admits fibrations with fibers of unbounded rank.
\end{corollary}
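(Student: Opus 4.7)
The plan is to reduce to \fullref{prop:fiberrank}: once the $t_i$ are fixed, the rank of the fiber of any fibration $\varphi\from G\onto\Z$ is $1+\tfrac{1}{k}\sum_i|\varphi(t_i)|$, so my goal becomes exhibiting a sequence of fibrations $\varphi_q$ along which $\sum_i|\varphi_q(t_i)|\to\infty$.

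The base character is the natural surjection $\varphi_0\from G\onto\Z$ with $\varphi_0(\F_n)=0$ and $\varphi_0(t)=1$; by \fullref{main} we have $[\varphi_0]\in\Sigma(G)$ and $\varphi_0(t_i)\ne0$ for each $i$, since no $t_i$ lies in $\F_n$. I will perturb $\varphi_0$ by a carefully chosen second character. Since $b_1(G)\ge2$ and the linear form $\chi\mapsto\chi(t)$ on $\Hom(G,\Q)$ is surjective (witnessed by $\varphi_0$), its kernel has positive dimension, so I may pick a primitive integer character $\psi\from G\to\Z$ with $\psi(t)=0$. Writing any $g\in G$ as $ft^k$ gives $\psi(g)=\psi(f)+k\psi(t)=\psi(f)$, so $\psi|_{\F_n}\from\F_n\to\Z$ is already surjective.

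For each positive integer $q$ set $\varphi_q=q\varphi_0+\psi$. Because $\varphi_q|_{\F_n}=\psi|_{\F_n}$ is surjective, $\varphi_q\from G\onto\Z$ is automatically a primitive surjection. For all but finitely many $q$ (those $q=-\psi(t_i)/\varphi_0(t_i)$ with $\psi(t_i)\ne0$), every value $\varphi_q(t_i)=q\varphi_0(t_i)+\psi(t_i)$ is nonzero, so \fullref{main} places $[\varphi_q]$ in $\Sigma(G)$ and makes $\ker\varphi_q$ a finitely generated free group. By \fullref{prop:fiberrank} its rank equals
\[
r_q=1+\frac{1}{k}\sum_{i=1}^{n-1}\bigl|q\varphi_0(t_i)+\psi(t_i)\bigr|,
\]
which grows linearly in $q$ since each $\varphi_0(t_i)\ne0$; hence $r_q\to\infty$, and the ranks of the fibers are indeed unbounded.

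The one genuinely delicate step is primitivity of $\varphi_q$. Had I allowed a generic $\psi$ with $\psi(t)\ne0$, I would have been forced to rescale $\varphi_q$ by the gcd $d_q$ of its image, and a priori $d_q$ could itself grow linearly in $q$ and cancel the growth of the numerator. Insisting on $\psi(t)=0$ forces $\varphi_q|_{\F_n}$ to coincide with the surjection $\psi|_{\F_n}$, which bypasses this issue entirely; this is exactly where the hypothesis $b_1(G)\ge2$ enters, guaranteeing the existence of $\psi$ independent of $\varphi_0$ and vanishing on $t$.
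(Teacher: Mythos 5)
Your proposal is correct and follows the same strategy as the paper's proof, which simply notes that $b_1(G)\ge 2$ lets one find surjections $\varphi\from G\onto\Z$ with no $\varphi(t_i)=0$ and $\varphi(t_1)$ arbitrarily large, then invokes the rank formula of \fullref{prop:fiberrank}. Your write-up fills in the details the paper leaves implicit — in particular, choosing $\psi$ with $\psi(t)=0$ to guarantee primitivity of the perturbed characters $\varphi_q$ without having to rescale — but the underlying idea (linear growth of $\sum_i|\varphi(t_i)|$ along a ray in $\Hom(G,\Z)$) is identical.
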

\begin{proof}
Since the Betti number is $\ge2$, we may find $\varphi\from G\onto\Z$ with no $\varphi(t_i)$ equal to $0$,  and $\varphi(t_1)$ arbitrarily large.
\end{proof}

The rank of the fiber can alternatively be calculated as the degree of the Alexander polynomial of $G$
relative to $\varphi$ \cite{Mil68, But07}.
McMullen \cite{McM02} determined the Alexander invariants for
3--manifolds. 
Button \cite{But07} gives an algorithm for computing the relative Alexander polynomial
of a group admitting a deficiency 1 presentation, including mapping tori of free group
automorphisms, using Fox Calculus.
By generalizing McMullen's arguments, he is able to give lower bounds
for the Betti number of the kernel, in particular proving
\fullref{corollary:unboundedfiberrank} without the polynomial growth
hypothesis,   using the fact   
that the Alexander polynomial of the mapping
torus is non-constant
\cite[Theorem~3.4]{But07}. 

We prove \fullref{prop:fiberrank} without using the Alexander
invariants by computing the kernel 
from the hierarchical structure of the mapping torus. 

\begin{proof}[Proof of \fullref{prop:fiberrank}] We will consider restrictions of $\varphi$ to subgroups. Since they are not necessarily surjective, we
 rewrite the
   formula in a homogeneous way.
     
  For $\varphi$ real-valued but   with cyclic 
   image, and $H<G_\alpha$, define  $[G_\alpha:H]_\varphi$ as the index $[\varphi(G_\alpha):\varphi(H)]$. We   write $[G_\alpha:t_i]_\varphi$ rather than $[G_\alpha:\langle t_i\rangle]_\varphi$. With this notation, we have to prove:
  \[r-1=\frac1k\sum_{i=1}^{n-1} [G_\alpha:t_i]_ \varphi.\]
 
 First consider the $\upg$ case.
 As   in the proof of \fullref{main}, we argue by induction on the number of splittings in a hierarchy provided by \fullref{lem:upghierarchy}.
 In the proof of \fullref{thm:Zhierarchy} we showed that $K=\ker\varphi$
decomposes as the fundamental group of a graph of groups   $\Gamma_K$ with trivial edge groups.
The rank of $K$ is therefore the first Betti number  of the   graph $\Gamma_K$  plus the
sum of the ranks of the vertex groups.
The ranks of the vertex groups are computed inductively. 

The proposition is true if 
  the hierarchy is trivial. 
If not, we consider the first splitting in the hierarchy. We denote by $t_{1}$ a generator of the edge group of this splitting, and by $t_2,\dots, t_{n-1}$ generators associated to the other  edge groups of the hierarchy.

There are 2 cases.   First consider the HNN case 
  $G_\alpha=G_1*_{\langle t_{1}\rangle} $. 
By \fullref{lem:orbitcounting}, $\Gamma_K$ has $[G_\alpha:G_1]_\varphi$ vertices, each carrying a free group  of rank $1+\sum _{i=2}^{n-1}
 [G_1:t_i]_ \varphi$   by the induction hypothesis, and $ [G_\alpha:t_{1}]_ \varphi$ edges. 
The first Betti number of the graph is
$$1-[G_\alpha:G_1]_\varphi+[G_\alpha:t_{1}]_ \varphi,$$
so $$r=1-[G_\alpha:G_1]_\varphi+[G_\alpha:t_{1}]_
\varphi+[G_\alpha:G_1]_\varphi(1+\sum _{i=2}^{n-1}  [G_1:t_i]_
\varphi),$$ yielding  $$r=1+[G_\alpha:t_{1}]_ \varphi+\sum _{i=2}^{n-1}  [G_\alpha:t_i]_ \varphi
= 1+\sum _{i=1}^{n-1}  [G_\alpha:t_i]_ \varphi.$$
 
 The computation in the amalgam case is similar, except that there are two types of vertices. 
 
 If $\alpha$ is not UPG, let $G_k$ denote $G_{\alpha^k}$, which is an index $k$ subgroup of
$G_\alpha$.
 Let $\varphi_k$ be  the restriction of $\varphi$ to $G_k$, and let
 $r_k$ the rank of $\ker\varphi_k$. 
We may take the same $t_i$'s for $\alpha$ and $\alpha^k$ (see the proof of \fullref{lem:power}),
and
\[[\ker\varphi:\ker\varphi_k](r-1)=r_k-1=\sum_{i=1}^{n-1}[G_k:t_i]_\varphi.\]
  Considering the exact sequence $1\to\ker\varphi\to G_\alpha\to\R$ and restricting to 
  $G_k$, which  has index $k$ in $G_\alpha$, we see that   $$k=[G_\alpha:G_k]_\varphi\ [\ker\varphi:\ker\varphi_k],$$ 
   so multiplying by  $[G_\alpha:G_k]_\varphi$ yields:  
 \[k (r-1)= [G_\alpha:G_k]_\varphi\sum_{i=1}^{n-1}[G_k:t_i]_\varphi= \sum_{i=1}^{n-1}[G_\alpha:t_i]_\varphi.
  \qedhere\] 
 \end{proof}

\section{Finite order automorphisms and $\gbs$ groups with center}
\label{cen}

We now suppose that $G$ is the mapping torus of a finite order element
$\Phi\in\Out(\F_n)$, for some $n\ge2$. 
By \cite[Proposition~4.1]{Lev13gbsrank} this is equivalent to $G$ being a GBS group with non-trivial
center, and being non-elementary  (i.e.\   not  isomorphic to $\Z$,
$\Z^2$, or the Klein bottle group). 
It has a finite index subgroup isomorphic to $\F_m\times\Z$ for some
$m\ge 2$. 
The main results of this section (\fullref {lem:fiberrankfor0growth} and \fullref {lem:fiberrankfor0growth2}) will give quantitative versions of these facts.

We refer to \cite{For03, For06, Lev07,Lev13gbsrank} for basic facts about GBS groups.  In terms of the modular map $\Delta:G\to\Q^*$ of \fullref{sec:gbs}, a non-elementary GBS group has non-trivial center if and only if $\Delta$ is trivial.

The group $G$ acts on a tree $T$ with infinite cyclic edge and vertex stabilizers (as usual, we assume that the action of $G$ on $T$ is minimal). 
Elements of $G$ fixing a point in $T$ are called elliptic. The set of elliptic elements does not depend on the choice of $T$, and consists of finitely many conjugacy classes of cyclic subgroups (because $T/G$ is a finite graph).

The  center of $G$, denoted by $Z$, is   infinite cyclic and equals the set of
elements acting as the identity on $T$ \cite[Proposition~2.5]{Lev07}.  
This implies that, if a subgroup $H<G$ acts on $T$ minimally, in particular if $H$ has finite index, or is normal and   non-central, its centralizer is equal to $Z$.

The quotient $G/Z$ acts on $T$ with finite stabilizers, so is virtually free. If $L<G/Z$ is free, its preimage is  isomorphic to $L\times \Z$ because it is a central $\Z$-by-free extension.

 \begin{definition} Given a non-elementary GBS group $G$ with non-trivial center $Z$, we define numbers $\kappa$ and $\epsilon$ as follows:
 \begin{itemize}
 \item $\kappa$ is the lcm of orders of torsion elements of $G/Z$;
 \item $\varepsilon+1$ is the smallest rank of a    free subgroup of finite index   $L< G/Z$.
 \end{itemize}
 \end{definition}

  The group $G/Z$ is the fundamental group of a finite graph of groups whose vertex groups are finite cyclic groups, and $\kappa$ is the lcm of  their orders. 
 If   $L \subset G/Z$ is a free subgroup of finite index, it is well-known that its index is divisible by $\kappa$ (each vertex group 
   acts freely on the set of cosets modulo  $ L $),    and \emph{$\kappa$ is the smallest index of a free subgroup of $G/Z$}
  (Serre \cite[II.2.6, Lemma 10]{Ser03}   defines a map of $G/Z$ into permutations of $\kappa$
elements so that the vertex groups act freely;    
  the
preimage of any point stabilizer is a\ free subgroup of index $\kappa$).

It follows that 
there exist free subgroups of index $p\kappa$ for every $p\ge1$. An Euler characteristic argument shows that the rank  is $p\varepsilon+1$ if the index is $p\kappa$.

 Going back to $G$, we may view  $\kappa$ as the smallest integer such that $a^\kappa\in Z$ for every elliptic element $a\in G$. We also have:
 
 \begin{lemma} \label{nel}
 Let $E\subset G$ be the   (normal) subgroup generated by all elliptic elements. If $\varphi:G\to\R$ is a homomorphism such that $\varphi(Z)\ne0$, then $\kappa=[\varphi(E):\varphi(Z)]$.
 \end{lemma}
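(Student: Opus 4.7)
The plan is to use Bass--Serre theory to pick concrete generators for the elliptic elements and then compute directly. Let $T$ be the Bass--Serre tree of $G$; since $T/G$ is a finite graph with infinite cyclic vertex groups, I choose $a_1,\dots,a_s\in G$ so that each $\langle a_i\rangle$ is a representative of a $G$-conjugacy class of vertex stabilizers, and every elliptic element is conjugate to a power of some $a_i$. Hence $E$ is generated by the $G$-conjugates of the $a_i$. Since $\varphi$ factors through the abelianization of $G$, we have $\varphi(g a_i g^{-1})=\varphi(a_i)$ for every $g\in G$, and therefore $\varphi(E)=\langle \varphi(a_1),\dots,\varphi(a_s)\rangle$ as a subgroup of $\R$.

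Next, because $Z$ acts trivially on $T$ it is contained in every vertex stabilizer, so $Z\subset\langle a_i\rangle$ for each $i$. Set $n_i=[\langle a_i\rangle:Z]$; then a generator $z$ of $Z$ satisfies $z=a_i^{\pm n_i}$, and the image of $a_i$ in $G/Z$ has order exactly $n_i$. Conversely every torsion element of $G/Z$ is the image of an elliptic element (if $g^m\in Z$ then $g$ cannot act hyperbolically on $T$), hence is conjugate into some $\langle \bar a_i\rangle$ and has order dividing $n_i$. So the lcm of orders of torsion elements of $G/Z$ is $\lcm(n_1,\dots,n_s)$, i.e.\ $\kappa=\lcm(n_1,\dots,n_s)$.

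Setting $c:=\varphi(z)\ne 0$ and writing $\R$ additively, the relation $n_i\varphi(a_i)=\pm c$ gives $\varphi(a_i)=\pm c/n_i$. Hence $\varphi(E)$ is the subgroup of $\R$ generated by $c/n_1,\dots,c/n_s$, which equals $(c/\kappa)\Z$ because $\gcd(\kappa/n_1,\dots,\kappa/n_s)=1$ (for each prime $p\mid\kappa$, the $p$-adic valuation of $\kappa$ is attained by some $n_i$). Since $\varphi(Z)=c\Z$, we conclude $[\varphi(E):\varphi(Z)]=\kappa$. The whole argument is routine once the $a_i$ are in place; the only mild point is the gcd--lcm identity at the end, which is why a careful choice of the $a_i$ up front is useful.
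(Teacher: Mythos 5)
Your proof is correct and follows essentially the same route as the paper. The paper's argument is terser: it notes that a generator $z$ of $Z$ satisfies $z=a^{\kappa_a}$ for each vertex-stabilizer generator $a$, observes that $\kappa=\lcm_a\kappa_a$ and that $\varphi(E)$ is generated by the numbers $\varphi(z)/\kappa_a$, and then states ``the lemma follows.'' Your write-up simply spells out the steps the paper leaves implicit: choosing conjugacy-class representatives of vertex stabilizers, invoking that $\varphi$ factors through the abelianization so conjugation is invisible, verifying that torsion in $G/Z$ can only come from elliptics (since a hyperbolic has no power acting trivially), and doing the $\gcd$--$\lcm$ arithmetic at the end. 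Same idea, just with the details filled in.
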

 
 Since any elliptic element has a power in $Z$, the restriction of $\varphi$ to $E$ is unique up to a multiplicative constant.
 
 \begin{proof} Let $z$ be a generator of $Z$. It acts as the identity on $T$, so if $a$ generates a vertex stabilizer there exists $\kappa_a$ such that $z=a^{\kappa_a}$. The number $\kappa$ is the lcm of the $\kappa_a$'s, while $\varphi(E)$ is generated by the numbers $\varphi(z)/\kappa_a$. The lemma follows.
 \end{proof}
 
  \begin{theorem}\label{lem:fiberrankfor0growth}
  Let $G$ be a non-elementary GBS group $G$ with non-trivial center   $Z$. Given positive integers $k$ and $n$, the following are equivalent:
\begin{enumerate}
\item $(k,n-1)$ is an integral  multiple of $(\kappa,\epsilon)$;
\item   $G/Z$ has a subgroup of index $k$ isomorphic to $\F_n$;
\item $G$ has a 
subgroup $G_0$ of index $k$ isomorphic to $\F_n\times\Z$, with $Z\subset G_0$; 
\item $G$ has a subgroup $G_0$ isomorphic to $\F_n\times\Z$ whose index is finite and equal to $k\, [Z:G_0\cap Z]$.
\end{enumerate}
 \end{theorem}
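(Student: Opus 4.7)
The plan is to prove the cycle of implications $(1) \Leftrightarrow (2) \Rightarrow (3) \Rightarrow (4) \Rightarrow (2)$. I first note that in every clause we must have $n \geq 2$: in $(3)$ and $(4)$, $n = 1$ would give $\Z^2$ of finite index in $G$, contradicting non-elementarity, while in $(1)$, $n-1 = p\epsilon \geq 1$ because $\epsilon \geq 1$ (as $G/Z$ is virtually free of rank at least $2$ when $G$ is non-elementary).

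The equivalence $(1) \Leftrightarrow (2)$ is essentially contained in the discussion preceding the theorem. Every finite-index free subgroup $L < G/Z$ has index a multiple $p\kappa$ of $\kappa$, and by the Euler characteristic argument already recalled, its rank equals $p\epsilon + 1$. Thus such an $L$ has index $k$ and rank $n$ if and only if $(k, n-1) = p(\kappa, \epsilon)$ for some $p \geq 1$, which is $(1)$.

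For $(2) \Rightarrow (3)$, let $\bar L < G/Z$ be a subgroup of index $k$ isomorphic to $\F_n$ and let $G_0$ be its preimage in $G$. Then $Z \subset G_0$, $[G : G_0] = k$, and the short exact sequence $1 \to Z \to G_0 \to \F_n \to 1$ is central (since $Z$ is central in $G$) and splits by freeness of the quotient, yielding $G_0 \cong \F_n \times Z \cong \F_n \times \Z$. The implication $(3) \Rightarrow (4)$ is immediate: the inclusion $Z \subset G_0$ forces $[Z : G_0 \cap Z] = 1$, so the index formula of $(4)$ reduces to $[G : G_0] = k$.

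The principal step is $(4) \Rightarrow (2)$. Given $G_0 \cong \F_n \times \Z$ of finite index in $G$ with $[G : G_0] = k[Z : G_0 \cap Z]$, I would consider the image $\bar G_0 = G_0 Z/Z \cong G_0/(G_0 \cap Z)$ in $G/Z$. Since $G_0$ has finite index, it acts minimally on the Bass-Serre tree of $G$, so by the centralizer property recalled in the opening of this section, the centralizer of $G_0$ in $G$ is equal to $Z$. As $n \geq 2$, the center $\gpcenter(G_0)$ is the $\Z$-factor of $G_0$, and the centralizer identification then yields $\gpcenter(G_0) = G_0 \cap Z$. Consequently $\bar G_0 \cong G_0/\gpcenter(G_0) \cong \F_n$, and the double-counting
\[
[G : G_0] = [G : G_0 Z]\cdot[G_0 Z : G_0] = [G/Z : \bar G_0] \cdot [Z : Z \cap G_0]
\]
combined with the hypothesis on $[G:G_0]$ forces $[G/Z : \bar G_0] = k$, which is $(2)$. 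The main obstacle is precisely the identification $\gpcenter(G_0) = G_0 \cap Z$: a priori one only has $G_0 \cap Z \subset \gpcenter(G_0)$, and without the reverse inclusion the quotient $G_0/(G_0 \cap Z)$ would be $\F_n \times \Z/d\Z$ for some $d \geq 1$ and would fail to embed as a free subgroup of $G/Z$.
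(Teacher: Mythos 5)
Your proof is correct and follows essentially the same route as the paper: equivalence of $(1)$ and $(2)$ via Euler characteristic, passage to preimages of free subgroups of $G/Z$ for $(2)\Rightarrow(3)$ (the paper does $(1)\Rightarrow(3)$ directly, which is the same thing), and for $(4)\Rightarrow(2)$ the centralizer-equals-$Z$ property to identify the $\Z$-factor of $G_0$ with $G_0\cap Z$, which is exactly the paper's argument. The ``obstacle'' you flag at the end is in fact resolved by the centralizer argument you already gave, so there is no gap.
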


 \begin{proof}
We already know that (1) and (2) are equivalent. For $p\ge1$, let $L\simeq \F_{p\varepsilon+1}$ have index $p\kappa$ in $G/Z$.
 Its preimage in $G$ contains $Z$, has index $p\kappa$, and is isomorphic to $\F_{p\varepsilon+1}\times\Z$. This proves that (1) implies (3),
 and (3) trivially implies (4). Conversely, if $G_0$ is as in (4), the $\Z$ factor is contained in $Z$  
   (it centralizes $G_0$, which acts minimally on $T$, so it acts
  trivially on $T$).  The image of $G_0$ in $G/Z$ is free of rank $n$ and has index $k=[G:G_0]/[Z:G_0\cap Z]$, so (2) holds.
 \end{proof}

  We now consider fibrations of $G$.

 \begin{theorem}\label{lem:fiberrankfor0growth2}
  Let $G$ be a non-elementary GBS group $G$ with non-trivial center. Let $k$ and $n$ be positive integers.
\begin{enumerate}
\item   If the first Betti number of $G$ is 1, 
there exists an element $\Phi\in\Out(\F_n)$ of order $k$ such that $G\simeq G_\Phi$
if and only if $(k,n-1)= (\kappa,\epsilon)$. 
\item If the first Betti number of $G$ is at least 2, there exists an element $\Phi\in\Out(\F_n)$ of order $k$ such that $G\simeq G_\Phi$
if and only if  $(k,n-1)$ is an integral  multiple of $(\kappa,\epsilon)$. 
\end{enumerate}
 \end{theorem}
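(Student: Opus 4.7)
The plan is to convert the question into one about fibrations $\varphi\from G\onto\Z$, using that $\Sigma(G)=S(G,Z)^c$ for a non-elementary GBS with nontrivial center (by \fullref{prop:gbs1center}, noting that such $G$ is automatically non-solvable since $G/Z$ is virtually free and not virtually cyclic). Given a fibration $\varphi$, set $m:=[\varphi(G):\varphi(Z)]$. The kernel $K=\ker\varphi$ satisfies $K\cap Z=0$, so $K$ embeds as a torsion-free finite-index subgroup of the virtually free group $G/Z$; each finite vertex group $\Z/\kappa_v$ of $G/Z$ acts freely on cosets of $K$, forcing $\kappa\mid m$, and an Euler-characteristic computation yields $\rank(K)=1+(m/\kappa)\epsilon$. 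I will verify that the monodromy has order exactly $m$ in $\Out(K)$ by showing $C_G(K)=Z$; this reduces to the fact that in the virtually free group $G/Z$, the centralizer of any non-abelian free subgroup is trivial, and the image of $K$ in $G/Z$ is such a subgroup (of rank $\ge 2$, using $\epsilon\ge 1$; the latter holds because $\epsilon=0$ would force $G/Z$ virtually cyclic and hence $G$ virtually $\Z^2$, i.e.\ elementary).

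The necessary direction then follows: if $G\simeq G_\Phi$ with $\Phi$ of order $k$, the canonical $\varphi\from G_\Phi\onto\Z$ (sending $\F_n\to 0$ and $t\to 1$) has $\varphi(Z)=k\Z$ because $Z(G_\Phi)$ is generated by an element of the form $c t^k$ with $c\in\F_n$, so $m=k$ and $(k,n-1) = (k/\kappa)(\kappa,\epsilon)$ is a positive integer multiple of $(\kappa,\epsilon)$.

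The heart of the sufficient direction is a divisibility claim: the image $\bar z$ of a generator of $Z$ in $G^{\mathrm{ab}}/\mathrm{tors}$ has divisibility exactly $\kappa$. The inequality $\kappa\mid\varphi(z)$ for any $\varphi\in\Hom(G,\Z)$ is \fullref{nel}. For the reverse, I will exhibit an explicit surjection $\varphi_0\from G\to\Z$ with $\varphi_0(z)=\kappa$ by setting $\varphi_0(a_v)=\kappa/\kappa_v$ on each vertex generator and $\varphi_0=0$ on each stable letter; each edge relation $a_v^{p_e}=a_w^{q_e}$ is respected because $a_v^{\kappa_v}=z=a_w^{\kappa_w}$ together with that relation forces $p_e\kappa_w=q_e\kappa_v$, and $\gcd_v(\kappa/\kappa_v)=1$ makes $\varphi_0$ surjective. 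Given this, write $\bar z=\kappa\bar v$ with $\bar v$ primitive in $\Z^{b_1(G)}$; a surjection $\varphi\from G\to\Z$ with $\varphi(\bar v)=p$ then has $\varphi(z)=p\kappa$, so the rank formula yields fiber of rank $1+p\epsilon$ and monodromy of order $p\kappa$. Such a surjection exists for every $p\ge 1$ when $b_1(G)\ge 2$ (adjust a second coordinate so coordinates have $\gcd$ equal to $1$), but only for $p=1$ when $b_1(G)=1$ (since then the only primitive value of $\varphi(\bar v)$ is $\pm 1$). This realizes exactly the pairs $(k,n-1)$ listed in each case of the theorem.

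The main obstacle I anticipate is the divisibility lemma, which demands the explicit construction of $\varphi_0$ and careful verification of the combinatorial identity $p_e\kappa_w=q_e\kappa_v$ for GBS edge data. A subsidiary issue is the identification $C_G(K)=Z$, needed to make the monodromy have order exactly $m$ rather than a divisor; this I handle via the standard fact on centralizers in virtually free groups.
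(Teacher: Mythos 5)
Your overall strategy is correct and closely parallels the paper's: parametrize fibrations by $m=[\varphi(G):\varphi(Z)]$, compute the rank of $\ker\varphi$ via its index-$m$ embedding into the virtually free group $G/Z$ (Euler characteristic), compute the monodromy order from $C_G(\ker\varphi)=Z$, and show the achievable values of $m$ are exactly $\kappa\Z$ (if $b_1=1$) or all of $\kappa\Z_{\ge1}$ (if $b_1\ge 2$). Where you use the divisibility of $\bar z$ in $G^{\mathrm{ab}}/\mathrm{tors}$, the paper uses the subgroup $E$ generated by elliptics and Lemma~\ref{nel}; where you prove the rank formula directly from $\chi(G/Z)$, the paper routes through Theorem~\ref{lem:fiberrankfor0growth} and Lemma~\ref{ordre}. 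Your explicit construction of $\varphi_0$ with $\varphi_0(a_v)=\kappa/\kappa_v$ is a concrete replacement for the paper's appeal to~\cite{Lev07}. These are reasonable reorganizations, not a different proof.

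There is one genuine gap, though it is local and fixable. You justify $C_G(K)=Z$ by invoking ``the fact that in the virtually free group $G/Z$, the centralizer of any non-abelian free subgroup is trivial.'' This is not a correct general statement: $\Z/2\times\F_2$ is virtually free, $\F_2$ is a normal non-abelian free subgroup of finite index, and its centralizer is $\Z/2\neq 1$. The obstruction is a non-trivial finite normal subgroup (finite radical). What is true, and what your argument actually needs, is that $G/Z$ acts \emph{faithfully} on its Bass-Serre tree $T$ (the kernel of the $G$-action is exactly $Z$), so it has trivial finite radical; then for $\bar K$ normal of finite index with $Z(\bar K)=1$, the centralizer $C_{G/Z}(\bar K)$ is finite and normal, hence trivial. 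Alternatively, and more in the spirit of the paper, note that $K$ is a normal non-central subgroup of $G$, hence acts minimally on $T$, and by the paper's centralizer remark at the start of Section~\ref{cen} this already gives $C_G(K)=Z$ directly. Either patch closes the gap.

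Two minor points worth tightening, though neither is wrong in substance: (i) when verifying $\varphi_0$ is well-defined you should also check the HNN-type relations $t_e a_v^{p_e}t_e^{-1}=a_w^{q_e}$; they abelianize to $p_e\bar a_v=q_e\bar a_w$, so the same identity $p_e\kappa_w=q_e\kappa_v$ handles them, but this case should be stated. (ii) You should record that $p_e=q_e$ for loop relations (rather than $p_e=-q_e$) because non-trivial center forces the modular map to be $+1$ on stable letters, so $\bar a_v$ is non-torsion; otherwise the formula $\varphi_0(a_v)=\kappa/\kappa_v$ would conflict with $a_v$ being torsion in the abelianization.
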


The first Betti number $b_1(G)$ of $G$ is equal to $1+b_1(\Gamma)$, with $\Gamma$ the quotient graph $T/G$ (see \cite[Proposition~3.3]{Lev07}),   and $G/E$ is free of rank $b_1(\Gamma)$ (this is a general fact about graphs of groups).
In particular,  $b_1(G)=
1$ is equivalent to  $\Gamma $ being  a tree, and to $E=G$.

 \begin{corollary}  
If the first Betti number of $G$  is at least 2, ranks of fibers are an arithmetic progression: there is an exact sequence $1\to\F_n\to G\to\Z\to1$ if and only if $n$ is of the form $p\varepsilon+1$ with $p\ge1$.
 \qed
\end{corollary}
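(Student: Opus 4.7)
The plan is to deduce this corollary directly from \fullref{lem:fiberrankfor0growth2}(2), after the key preliminary observation that every mapping torus structure on $G$ has finite-order monodromy.

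First, I will make that reduction. Given any exact sequence $1\to\F_n\to G\to\Z\to1$, splitting by a lift $t\in G$ of a generator of $\Z$ yields $G=\F_n\rtimes_\Phi\Z$ for some $\Phi\in\Out(\F_n)$. By hypothesis the center $Z$ of $G$ is infinite cyclic and non-trivial. Since $n\ge 2$, the free group $\F_n$ is centerless, so $Z\cap\F_n=1$ and $Z$ injects into $\Z$, with image $k\Z$ for some $k\ge 1$. A generator of $Z$ therefore has the form $z=f\,t^k$ with $f\in\F_n$, and the requirement that $z$ centralize $\F_n$ translates to $\Phi^k(g)=f^{-1}gf$ for every $g\in\F_n$. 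Hence $\Phi^k$ is inner, so $\Phi$ has finite order in $\Out(\F_n)$, with order dividing $k$.

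With this reduction in hand, the corollary becomes a direct unpacking of \fullref{lem:fiberrankfor0growth2}(2). For the ``if'' direction: given $n=p\varepsilon+1$ with $p\ge 1$, set $k=p\kappa$; then $(k,n-1)=p(\kappa,\varepsilon)$, and the theorem supplies $\Phi\in\Out(\F_n)$ of order $k$ with $G\simeq G_\Phi$, producing the required exact sequence. For the ``only if'' direction: a given fibration provides, via the reduction, a finite-order $\Phi\in\Out(\F_n)$ presenting $G$ as $G_\Phi$; the theorem then forces $(k,n-1)$ to be a positive integer multiple of $(\kappa,\varepsilon)$, so in particular $n-1=p\varepsilon$ for some $p\ge 1$.

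I expect the main obstacle to be the reduction to finite-order monodromy, since this is the one piece of content that goes beyond \fullref{lem:fiberrankfor0growth2}. However, the reduction itself is only a brief algebraic argument leveraging $Z(\F_n)=1$ for $n\ge 2$ together with the non-triviality of $Z(G)$; everything after it is formal manipulation of the theorem's statement. One small sanity check to include is that $\varepsilon\ge 1$ (because $G$ is non-elementary, so $G/Z$ is not virtually cyclic), which ensures that the arithmetic progression $\{p\varepsilon+1:p\ge 1\}$ consists of integers $\ge 2$, consistent with the standing assumption $n\ge 2$.
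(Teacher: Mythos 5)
Your proof is correct and follows essentially the route the paper intends: read off the corollary from \fullref{lem:fiberrankfor0growth2}(2). The one step you make explicit — that \emph{any} short exact sequence $1\to\F_n\to G\to\Z\to1$ with $G$ a non-elementary $\gbs$ group with non-trivial center automatically has finite-order monodromy — is the real content behind the paper's bare ``\qed'', and it is exactly what makes the ``only if'' direction legitimate (since \fullref{lem:fiberrankfor0growth2} is phrased only in terms of finite-order $\Phi$). Your argument for this is the same mechanism already used in the proof of \fullref{ordre}: with $Z\cap\F_n=1$ because $\F_n$ is centerless for $n\ge2$, a generator $z=f\,t^k$ of $Z$ centralizing $\F_n$ forces $\alpha^{\pm k}$ to be inner, hence $\Phi$ has finite order. (Whether it is $\Phi^k$ or $\Phi^{-k}$ that equals conjugation by $f$ depends on the sign convention, but the conclusion is the same.) Your closing check that $\varepsilon\ge 1$, so the progression lives in $\{2,3,\dots\}$, is a sensible sanity check and consistent with $G$ being non-elementary. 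Nothing is missing.
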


Before proving the theorem, we note:

\begin{lemma} \label{ordre}
If $G=G_\Phi$, with $\Phi\in\Out(\F_n)$ of   order $k$, and $\varphi:G\onto\Z$ is the associated fibration, then $\varphi(Z)=k\Z$    and $\varphi^{-1}(k\Z)\simeq \F_n\times\Z$. 
\end{lemma}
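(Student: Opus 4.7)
The plan is to compute the centre $Z$ of $G = \F_n \rtimes_\alpha \Z$ explicitly from $\alpha$, and then observe that $\varphi^{-1}(k\Z)$ is generated by $\ker\varphi = \F_n$ together with a single central element.

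A routine semidirect-product calculation (using $t^{-1}ft = \alpha(f)$ and hence $t^{m}ft^{-m} = \alpha^{-m}(f)$) shows that an element $t^{m}w$ with $w \in \F_n$ is central in $G$ if and only if $\alpha(w) = w$ and $\alpha^{m}$ equals the inner automorphism $\iota_w : f \mapsto wfw^{-1}$. The second condition immediately forces $\Phi^{m} = 1$ in $\Out(\F_n)$, so $k \mid m$; taking $m = \varphi(z)$ for any $z \in Z$ gives $\varphi(Z) \subseteq k\Z$. For the reverse inclusion, pick $g \in \F_n$ with $\alpha^{k} = \iota_g$. The tautology $\alpha \circ \alpha^{k} = \alpha^{k} \circ \alpha$ translates into $\iota_{\alpha(g)} = \iota_g$; since $\F_n$ is centreless (we have $n \geq 2$), this yields $\alpha(g) = g$. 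The characterization then exhibits $z := t^{k}g$ as a central element with $\varphi(z) = k$, so $\varphi(Z) = k\Z$.

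For the isomorphism, $\varphi^{-1}(k\Z)$ has index $k$ in $G$ and is generated by $\ker\varphi = \F_n$ together with any preimage of $k$, for example the element $z$ just constructed. Applying $\varphi$ to any relation of the form $z^{j} \in \F_n$ forces $j=0$, so $\langle z \rangle \cap \F_n = \{1\}$; combined with the centrality of $z$, this gives $\varphi^{-1}(k\Z) = \F_n \times \langle z \rangle \cong \F_n \times \Z$.

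There is no real obstacle in this argument: the whole statement is a finite algebraic manipulation inside $G$. The only conceptual input is the centrelessness of $\F_n$ for $n \geq 2$, which is used both in upgrading $\iota_{\alpha(g)} = \iota_g$ to $\alpha(g) = g$ and in identifying the subgroup $\langle \F_n, z\rangle$ as an internal direct product.
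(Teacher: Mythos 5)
Your proof is correct. The forward direction, $\varphi(Z)\subseteq k\Z$, is essentially identical to the paper's: writing a central element as $t^m w$, centralizing $\F_n$ forces $\alpha^m$ inner, hence $k\mid m$. The divergence is in producing an element of $Z$ mapping to $k$. The paper takes $h\in\F_n$ with $ht^k$ centralizing $\F_n$ and then \emph{deduces} $ht^k\in Z$ by invoking the GBS machinery set up earlier in the section: $\F_n$ is normal and non-central, hence acts minimally on the Bass--Serre tree $T$, and the centralizer of any such subgroup is exactly $Z$. You avoid the tree entirely by observing that $\alpha^k=\iota_g$ already forces $\alpha(g)=g$ (since $\alpha$ commutes with $\alpha^k$ and $\F_n$ is centerless for $n\geq 2$), which is precisely the missing condition for $t^k g$ to commute with $t$. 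This is more elementary and self-contained; the paper's version is shorter in context because the centralizer fact for GBS groups is already on the table, but it silently relies on that non-trivial structural input. The final direct-product argument ($z$ central, $\langle z\rangle\cap\F_n=\{1\}$, generate $\varphi^{-1}(k\Z)$) is the same splitting argument the paper compresses into one sentence.
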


\begin{proof} Let $\alpha\in\Aut(\F_n)$ be a representative of $\Phi$, let $t$  be the associated stable letter, and let $z$ be a generator of $Z$.   Writing $z=gt^q$ with $g\in\F_n$, we   show $ | q| =| k| $. Since $z$ centralizes $\F_n$, the automorphism $\alpha^q$ is inner, so $q$ is a multiple of $k$.
 On the other hand,  since $\Phi$ has order $k$, some $ht^k$, with $h\in \F_n$,  centralizes $\F_n$ hence belongs to $Z$ (because $\F_n$ is normal so acts minimally on $T$).      This implies that $k$ is a multiple of $q$, and therefore $ | q| =| k| $. 
The extension $1\to\F_n\to\varphi^{-1}(k\Z)\to k\Z\to1$ is trivial because $\varphi^{-1}(k\Z)$ contains $z^{\pm1}$.
\end{proof}

\begin{proof}[Proof of \fullref{lem:fiberrankfor0growth2}]

  Suppose $G=G_\Phi$ with $\Phi\in\Out(\F_n)$ of   order $k$. 
\fullref{ordre} shows that  $\varphi^{-1}(k\Z)$ is a subgroup of index $k$ isomorphic to $\F_n\times\Z$ and containing $Z$, so  
$(k,n-1)$ is a multiple of $(\kappa,\varepsilon)$    
by \fullref{lem:fiberrankfor0growth}. 
If $b_1(G)=1$, we have seen that $E=G$, so $k=\kappa$ by  \fullref{nel}. We have   proved the ``only if'' direction in both assertions of the theorem.

 For the converse, first recall how to construct maps $\varphi:G\to\R$ with $\varphi(Z)\ne0$ (see \cite[Proposition~3.3]{Lev07}). View $G$ as the fundamental group of a graph of infinite cyclic groups. Consider a standard generating set, consisting of generators of vertex groups and stable letters; the number of stable letters is the first Betti number of the graph $\Gamma=T/G$,   equal to $b_1(G)-1$. One first defines $\varphi$ on $E$ (as mentioned above,   $\varphi_{ | E  }$ is unique up to scaling). One must then choose the image of the  stable letters. Non-triviality of the center ensures that any choice yields a well-defined map $\varphi:G\to\R$.

   If $b_1(G)\ge2$, there is at least one stable letter and one may construct an epimorphism $\varphi_p:G\onto \Z$ such that $\varphi_p(E)=p\Z$ for any $p\ge1$. If $b_1(G)=1$  there is no stable letter, $E=G$, and  we only obtain $\varphi_1$.

We   have $\varphi_p(Z)=p\kappa\Z$ by   \fullref{nel}, and we let $G_p$ be  the preimage   $\varphi_p^{-1}(p\kappa\Z)$. It is a normal subgroup of index $p\kappa$ containing $Z$ and $\ker\varphi_p$.

Consider the projection $\pi:G\to G/Z$. The groups $G_p$ and $\ker\varphi_p$ have the same image, and $\pi$ is injective on $\ker\varphi_p$. 
Since every elliptic element    $a\in G_p$ belongs to $Z$ (because $\langle a,Z\rangle$ is cyclic), the image of $G_p$ in $G/Z$ is torsion-free. It is a free subgroup of index $p\kappa$, and its rank is $p\varepsilon+1$.
 We therefore have $\ker\varphi_p\simeq \F_{p\varepsilon+1}$. By \fullref{ordre}, the associated monodromy $\Phi$ has order $p\kappa$. 
\end{proof}

\begin{remark} The proof shows that $G$ has a \emph{normal} subgroup   of index $\kappa$ isomorphic to $\F_n\times\Z$ and containing  $Z$, and $G/Z$ has a \emph{normal} free subgroup of index $\kappa$.
\end{remark}

One might ask whether the monodromies of fibrations with fiber of
minimal rank $\varepsilon+1$ for a fixed $\gbs$ group $G$ must be conjugate in 
$\Out(\F_{\varepsilon+1})$, or whether this must be true for fibrations in
the same component of $\Sigma(G)$.
The answer to both questions is `no' once $n=\varepsilon+1\geq 4$.

For $n=2$, Bogopolski, Martino, and Ventura \cite{BogMarVen07} show that mapping tori of
$[\alpha_1],\,[\alpha_2]\in\Out(\F_2)$ are isomorphic if and only if
$[\alpha_1]$ is conjugate to $[\alpha_2]$ or $[\alpha_2]^{-1}$ in $\Out(\F_2)$.

An example of Vikent'ev \cite{Vik07} shows this statement is not true in general
for $n=3$, but Khramtsov \cite{Khr90} showed it is true for finite
order outer automorphisms.

For $n=4$, Khramtsov \cite{Khr90} gives the following example.
Let $G$ be the $\gbs$ group $\langle a,b,t\mid a^4=b^2,\, [b,t]=1\rangle$.
Consider $\varphi_i\from G\onto \Z$ defined by $\varphi_1(a)=1$,
$\varphi_1(b)=2$, $\varphi_1(t)=0$ and $\varphi_2(a)=1$,
$\varphi_2(b)=2$ and $\varphi(t)=1$.
These surjections give fibrations of $G$ with fiber of rank 4 and
monodromy of order 4 in $\Out(\F_4)$.  
The minimal fiber rank for $G$ is $n=\varepsilon+1=4$, by
\fullref{lem:fiberrankfor0growth2}, since $k=\kappa=4$.
In this example, $\Sigma(G)\cong S^1\setminus S^0$ consists of classes of homomorphisms not killing the element $a$.
Thus, $[\varphi_1]$ and $[\varphi_2]$ are in the same component of
$\Sigma(G)$, since they both send $a$ to a positive number.
Khramtsov gives an ad hoc argument to show the monodromies are
non-conjugate. 
This can also be verified using a solution to the conjugacy problem
for finite order elements of $\Out(\F_n)$.
Such solutions follow from work of 
Krsti\'c \cite{Krs89},  and are explained in
\cite{Khr95} or \cite{KrsLusVog01}.


\bibliographystyle{hyperamsplain}
\bibliography{fibrations}

\providecommand{\bysame}{\leavevmode\hbox to3em{\hrulefill}\thinspace}
\providecommand{\MR}{\relax\ifhmode\unskip\space\fi MR }
\providecommand{\MRhref}[2]{%
  \href{http://www.ams.org/mathscinet-getitem?mr=#1}{#2}
}
\providecommand{\href}[2]{#2}
\providecommand{\doi}[1]{doi: #1}
\begin{thebibliography}{10}

\bibitem{BauTay68}
Gilbert Baumslag and Tekla Taylor, \emph{The centre of groups with one defining
  relator}, Math. Ann. \textbf{175} (1968), 315--319.
  \href{http://www.ams.org/mathscinet-getitem?mr=0222144}{MR~0222144 (36
  \#5196)}

\bibitem{BesFeiHan00}
Mladen Bestvina, Mark Feighn, and Michael Handel, \emph{The {T}its alternative
  for {${\rm Out}(F_n)$}. {I}. {D}ynamics of exponentially-growing
  automorphisms}, Ann. of Math. (2) \textbf{151} (2000), no.~2, 517--623,
  \href{http://dx.doi.org/10.2307/121043}{{\texttt{\textsc{doi}:\detokenize{10.2307/121043}}}}.
  \href{http://www.ams.org/mathscinet-getitem?mr=1765705}{MR~1765705
  (2002a:20034)}

\bibitem{BieNeuStr87}
Robert Bieri, Walter~D. Neumann, and Ralph Strebel, \emph{A geometric invariant
  of discrete groups}, Invent. Math. \textbf{90} (1987), no.~3, 451--477,
  \href{http://dx.doi.org/10.1007/BF01389175}{{\texttt{\textsc{doi}:\detokenize{10.1007/BF01389175}}}}.
  \href{http://www.ams.org/mathscinet-getitem?mr=914846 }{MR~914846
  (89b:20108)}

\bibitem{BieriStrebelMonograph}
Robert Bieri and Ralph Strebel, \emph{Geometric invariants for discrete
  groups}, 1992.

\bibitem{BogMarVen07}
Oleg Bogopolski, Armando Martino, and Enric Ventura, \emph{The automorphism
  group of a free-by-cyclic group in rank 2}, Comm. Algebra \textbf{35} (2007),
  no.~5, 1675--1690,
  \href{http://dx.doi.org/10.1080/00927870601169333}{{\texttt{\textsc{doi}:\detokenize{10.1080/00927870601169333}}}}.
  \href{http://www.ams.org/mathscinet-getitem?mr=2317637}{MR~2317637
  (2008d:20063)}

\bibitem{BriGro10}
Martin~R. Bridson and Daniel Groves, \emph{The quadratic isoperimetric
  inequality for mapping tori of free group automorphisms}, Mem. Amer. Math.
  Soc. \textbf{203} (2010), no.~955, xii+152,
  \href{http://dx.doi.org/10.1090/S0065-9266-09-00578-X}{{\texttt{\textsc{doi}:\detokenize{10.1090/S0065-9266-09-00578-X}}}}.
  \href{http://www.ams.org/mathscinet-getitem?mr=2590896}{MR~2590896
  (2011g:20058)}

\bibitem{Bro87}
Kenneth~S. Brown, \emph{Trees, valuations, and the {B}ieri-{N}eumann-{S}trebel
  invariant}, Invent. Math. \textbf{90} (1987), no.~3, 479--504,
  \href{http://dx.doi.org/10.1007/BF01389176}{{\texttt{\textsc{doi}:\detokenize{10.1007/BF01389176}}}}.
  \href{http://www.ams.org/mathscinet-getitem?mr=914847 }{MR~914847
  (89e:20060)}

\bibitem{But07}
Jack~O. Button, \emph{Mapping tori with first {B}etti number at least two}, J.
  Math. Soc. Japan \textbf{59} (2007), no.~2, 351--370,
  \href{http://projecteuclid.org/euclid.jmsj/1191247591}{{\texttt{\detokenize{http://projecteuclid.org/euclid.jmsj/1191247591}}}}.
  \href{http://www.ams.org/mathscinet-getitem?mr=2325689}{MR~2325689
  (2008i:57002)}

\bibitem{DowKapLei13a}
Spencer Dowdall, Ilya Kapovich, and Christopher~J. Leininger, \emph{Dynamics on
  free-by-cyclic groups}, preprint, 2013,
  \href{http://arXiv.org/abs/1301.7739v6}{{\texttt{arXiv:1301.7739v6}}}.

\bibitem{DowKapLei13b}
Spencer Dowdall, Ilya Kapovich, and Christopher~J. Leininger, \emph{Mc{M}ullen
  polynomials and lipschitz flows for free-by-cyclic groups}, preprint, 2013,
  \href{http://arXiv.org/abs/1310.7481v3}{{\texttt{arXiv:1310.7481v3}}}.

\bibitem{FeiHan99}
Mark Feighn and Michael Handel, \emph{Mapping tori of free group automorphisms
  are coherent}, Ann. of Math. (2) \textbf{149} (1999), no.~3, 1061--1077,
  \href{http://dx.doi.org/10.2307/121081}{{\texttt{\textsc{doi}:\detokenize{10.2307/121081}}}}.
  \href{http://www.ams.org/mathscinet-getitem?mr=1709311}{MR~1709311
  (2000i:20050)}

\bibitem{For03}
Max Forester, \emph{On uniqueness of {JSJ} decompositions of finitely generated
  groups}, Comment. Math. Helv. \textbf{78} (2003), no.~4, 740--751,
  \href{http://dx.doi.org/10.1007/s00014-003-0780-y}{{\texttt{\textsc{doi}:\detokenize{10.1007/s00014-003-0780-y}}}}.
  \href{http://www.ams.org/mathscinet-getitem?mr=2016693}{MR~2016693
  (2005b:20075)}

\bibitem{For06}
Max Forester, \emph{Splittings of generalized {B}aumslag-{S}olitar groups},
  Geom. Dedicata \textbf{121} (2006), 43--59,
  \href{http://dx.doi.org/10.1007/s10711-006-9085-9}{{\texttt{\textsc{doi}:\detokenize{10.1007/s10711-006-9085-9}}}}.
  \href{http://www.ams.org/mathscinet-getitem?mr=2276234}{MR~2276234
  (2007m:20040)}

\bibitem{GauLus07}
Francois Gautero and Martin Lustig, \emph{The mapping-torus of a free group
  automorphism is hyperbolic relative to the canonical subgroups of polynomial
  growth}, preprint, 2007,
  \href{http://arXiv.org/abs/0707.0822v2}{{\texttt{arXiv:0707.0822v2}}}.

\bibitem{GeoMihSap01}
Ross Geoghegan, Michael~L. Mihalik, Mark Sapir, and Daniel~T. Wise,
  \emph{Ascending {HNN} extensions of finitely generated free groups are
  {H}opfian}, Bull. London Math. Soc. \textbf{33} (2001), no.~3, 292--298,
  \href{http://dx.doi.org/10.1017/S0024609301007986}{{\texttt{\textsc{doi}:\detokenize{10.1017/S0024609301007986}}}}.
  \href{http://www.ams.org/mathscinet-getitem?mr=1817768}{MR~1817768
  (2002a:20029)}

\bibitem{Khr90}
Dmitrii~G. Khramtsov, \emph{Outer automorphisms of free groups},
  Group-theoretic investigations (Russian), Akad. Nauk SSSR Ural. Otdel.,
  Sverdlovsk, 1990, pp.~95--127.
  \href{http://www.ams.org/mathscinet-getitem?mr=1159134}{MR~1159134}

\bibitem{Khr95}
Dmitrii~G. Khramtsov, \emph{Decidability of the conjugacy problem for finite
  subgroups of automorphism groups of free groups}, Algebra i Logika
  \textbf{34} (1995), no.~5, 558--606, 609.
  \href{http://www.ams.org/mathscinet-getitem?mr=1391053}{MR~1391053
  (97i:20029)}

\bibitem{Krs89}
Sava Krsti{{\'c}}, \emph{Actions of finite groups on graphs and related
  automorphisms of free groups}, J. Algebra \textbf{124} (1989), no.~1,
  119--138,
  \href{http://dx.doi.org/10.1016/0021-8693(89)90154-3}{{\texttt{\textsc{doi}:\detokenize{10.1016/0021-8693(89)90154-3}}}}.
  \href{http://www.ams.org/mathscinet-getitem?mr=1005698}{MR~1005698
  (90i:20037)}

\bibitem{KrsLusVog01}
Sava Krsti{\'c}, Martin Lustig, and Karen Vogtmann, \emph{An equivariant
  {W}hitehead algorithm and conjugacy for roots of {D}ehn twist automorphisms},
  Proc. Edinb. Math. Soc. (2) \textbf{44} (2001), no.~1, 117--141,
  \href{http://dx.doi.org/10.1017/S0013091599000061}{{\texttt{\textsc{doi}:\detokenize{10.1017/S0013091599000061}}}}.
  \href{http://www.ams.org/mathscinet-getitem?mr=1879214}{MR~1879214
  (2003c:20044)}

\bibitem{LeaNibWis99}
Ian~J. Leary, Graham~A. Niblo, and Daniel~T. Wise, \emph{Some free-by-cyclic
  groups}, Groups {S}t. {A}ndrews 1997 in {B}ath, {II}, London Math. Soc.
  Lecture Note Ser., vol. 261, Cambridge Univ. Press, Cambridge, 1999,
  pp.~512--516,
  \href{http://dx.doi.org/10.1017/CBO9780511666148.014}{{\texttt{\textsc{doi}:\detokenize{10.1017/CBO9780511666148.014}}}}.
  \href{http://www.ams.org/mathscinet-getitem?mr=1676647}{MR~1676647
  (2000j:20049)}

\bibitem{Lev94}
Gilbert Levitt, \emph{{${\bf R}$}-trees and the {B}ieri-{N}eumann-{S}trebel
  invariant}, Publ. Mat. \textbf{38} (1994), no.~1, 195--202,
  \href{http://dx.doi.org/10.5565/PUBLMAT_38194_14}{{\texttt{\textsc{doi}:\detokenize{10.5565/PUBLMAT_38194_14}}}}.
  \href{http://www.ams.org/mathscinet-getitem?mr=1291961}{MR~1291961
  (95f:20045)}

\bibitem{Lev07}
Gilbert Levitt, \emph{On the automorphism group of generalized
  {B}aumslag-{S}olitar groups}, Geom. Topol. \textbf{11} (2007), 473--515,
  \href{http://dx.doi.org/10.2140/gt.2007.11.473}{{\texttt{\textsc{doi}:\detokenize{10.2140/gt.2007.11.473}}}}.
  \href{http://www.ams.org/mathscinet-getitem?mr=2302496}{MR~2302496
  (2008h:20061)}

\bibitem{Lev13gbsrank}
Gilbert Levitt, \emph{Generalized {B}aumslag-{S}olitar groups: rank and finite
  index subgroups}, in press, Ann. Inst. Fourier, 2013,
  \href{http://arXiv.org/abs/1304.7582v2}{{\texttt{arXiv:1304.7582v2}}}.

\bibitem{Mac02}
Nata{\v{s}}a Macura, \emph{Detour functions and quasi-isometries}, Q. J. Math.
  \textbf{53} (2002), no.~2, 207--239,
  \href{http://dx.doi.org/10.1093/qjmath/53.2.207}{{\texttt{\textsc{doi}:\detokenize{10.1093/qjmath/53.2.207}}}}.
  \href{http://www.ams.org/mathscinet-getitem?mr=1909513}{MR~1909513
  (2003c:20045)}

\bibitem{McM02}
Curtis~T. McMullen, \emph{The {A}lexander polynomial of a 3-manifold and the
  {T}hurston norm on cohomology}, Ann. Sci. {\'E}cole Norm. Sup. (4)
  \textbf{35} (2002), no.~2, 153--171,
  \href{http://dx.doi.org/10.1016/S0012-9593(02)01086-8}{{\texttt{\textsc{doi}:\detokenize{10.1016/S0012-9593(02)01086-8}}}}.
  \href{http://www.ams.org/mathscinet-getitem?mr=1914929}{MR~1914929
  (2003d:57044)}

\bibitem{Mei90}
Ga{{\"e}}l Meigniez, \emph{Bouts d'un groupe op{\'e}rant sur la droite. {I}.
  {T}h{\'e}orie alg{\'e}brique}, Ann. Inst. Fourier (Grenoble) \textbf{40}
  (1990), no.~2, 271--312,
  \href{http://dx.doi.org/10.5802/aif.1214}{{\texttt{\textsc{doi}:\detokenize{10.5802/aif.1214}}}}.
  \href{http://www.ams.org/mathscinet-getitem?mr=1070829}{MR~1070829
  (93a:57033)}

\bibitem{Mil68}
John~W. Milnor, \emph{Infinite cyclic coverings}, Conference on the {T}opology
  of {M}anifolds ({M}ichigan {S}tate {U}niv., {E}. {L}ansing, {M}ich., 1967),
  Prindle, Weber \& Schmidt, Boston, Mass., 1968, pp.~115--133.
  \href{http://www.ams.org/mathscinet-getitem?mr=0242163}{MR~0242163 (39
  \#3497)}

\bibitem{Ser03}
Jean-Pierre Serre, \emph{Trees}, Springer Monographs in Mathematics,
  Springer-Verlag, Berlin, 2003, Translated from the French original by John
  Stillwell, Corrected 2nd printing of the 1980 English translation.
  \href{http://www.ams.org/mathscinet-getitem?mr=1954121}{MR~1954121
  (2003m:20032)}

\bibitem{Str12}
Ralph Strebel, \emph{Notes on the sigma invariants}, preprint, 2012,
  \href{http://arXiv.org/abs/1204.0214v2}{{\texttt{arXiv:1204.0214v2}}}.

\bibitem{Swa69}
Richard~G. Swan, \emph{Groups of cohomological dimension one}, J. Algebra
  \textbf{12} (1969), 585--610.
  \href{http://www.ams.org/mathscinet-getitem?mr=0240177}{MR~0240177 (39
  \#1531)}

\bibitem{Thu86}
William~P. Thurston, \emph{A norm for the homology of {$3$}-manifolds}, Mem.
  Amer. Math. Soc. \textbf{59} (1986), no.~339, i--vi and 99--130.
  \href{http://www.ams.org/mathscinet-getitem?mr=823443 }{MR~823443
  (88h:57014)}

\bibitem{Vik07}
Ruslan~A. Vikent'ev, \emph{On the isomorphism problem in the class of
  extensions of a free group by an infinite cyclic group}, Siberian
  Mathematical Journal \textbf{48} (2007), 428--429,
  \href{http://dx.doi.org/10.1007/s11202-007-0045-x}{{\texttt{\textsc{doi}:\detokenize{10.1007/s11202-007-0045-x}}}}.

\end{thebibliography}

\end{document}